\numberwithin{equation}{section}
\theoremstyle{plain} 
\newtheorem{theorem}{Theorem}[section]
\newtheorem*{thma}{Theorem A}
\newtheorem*{thmb}{Theorem B}
\newtheorem{proposition}[theorem]{Proposition}
\newtheorem{corollary}[theorem]{Corollary}
\newtheorem{lemma}[theorem]{Lemma}
\newtheorem*{question*}{Question}
\theoremstyle{definition} 
\newtheorem{definition}[theorem]{Definition}
\newtheorem{example}[theorem]{Example}
\newtheorem{setting}[theorem]{Setting}
\theoremstyle{remark} 
\newtheorem{remark}[theorem]{Remark}
\newcommand{\Z}{\mathbb Z}
\newcommand{\N}{\mathbb N}
\newcommand{\DD}{\mathsf D}
\newcommand{\TT}{\mathsf T}
\newcommand{\Sub}{\mathsf S}
\newcommand{\E}{\mathbb E}
\newcommand{\Se}{\mathbb{S}}
\newcommand{\bo}{\operatorname{b}\nolimits}
\DeclareMathOperator{\CM}{\mathsf{CM}}
\DeclareMathOperator{\grCM}{\mathsf{CM}^{\Z}}
\DeclareMathOperator{\grsCM}{\underline{\CM}^{\Z}}
\DeclareMathOperator{\Gr}{\mathsf{Gr}}
\DeclareMathOperator{\gr}{\mathsf{gr}}
\DeclareMathOperator{\Proj}{\mathsf{Proj}}
\DeclareMathOperator{\grproj}{\mathsf{gr proj}}
\DeclareMathOperator{\qgr}{\mathsf{qgr}}
\newcommand{\Hom}{\operatorname{Hom}\nolimits}
\newcommand{\End}{\operatorname{End}\nolimits}
\newcommand{\Ext}{\operatorname{Ext}\nolimits}
\newcommand{\diag}{\operatorname{diag}\nolimits}
\newcommand{\GrAut}{\operatorname{GrAut}\nolimits}
\newcommand{\chare}{\operatorname{char}\nolimits}
\newcommand{\injdim}{\mathop{{\rm inj.dim}}\nolimits}
\newcommand{\gldim}{\mathop{{\rm gl.dim\,}}\nolimits}
\newcommand{\otimesk}{\otimes_k}
\newcommand{\op}{\operatorname{op}\nolimits}
\newcommand{\bd}{\DD^{\bo}}
\newcommand{\sbd}{\underline{\DD}^{\bo}}
\newcommand{\Mod}{\mathsf{Mod}\,}
\newcommand{\RHom}{\mathbf{R}\strut\kern-.2em\operatorname{Hom}\nolimits}
\newcommand{\uRHom}{\mathbf{R}\strut\kern-.2em\underline{\operatorname{Hom}\nolimits}}
\newcommand{\sing}{\DD_{\mathsf{\tiny Sg}}^{\mathsf{\tiny gr}}}
\newcommand{\xsing}{\DD_{\mathsf{\tiny Sg}}}
\newcommand{\q}{\mathsf{\bf {q}}}
\DeclareMathOperator{\add}{\mathsf{add}}
\DeclareMathOperator{\topm}{top}
\author{Louis--Philippe Thibault}
\address{Institutt for matematiske fag, NTNU, 7491 Trondheim, Norway}
\email{louis.thibault@ntnu.no}
\begin{document}

\title{Tilting objects in singularity categories and levelled mutations}



\begin{abstract}
We show the existence of tilting objects in the singularity category $\sing(eAe)$ associated to certain noetherian AS-regular algebras $A$ and idempotents $e$. This gives a triangle equivalence between $\sing(eAe)$ and the derived category of a finite-dimensional algebra. In particular, we obtain a tilting object if the Beilinson algebra of $A$ is a levelled Koszul algebra. This generalises the existence of a tilting object in $\sing(S^G)$, where $S$ is a Koszul AS-regular algebra and $G$ is a finite group acting on $S$, found by Iyama--Takahashi and Mori--Ueyama.  Our method involves the use of Orlov's embedding of $\sing(eAe)$ into $\bd(\qgr eAe)$, the bounded derived category of graded tails, and of levelled mutations on a tilting object of $\bd(\qgr eAe)$.
\end{abstract}

\thanks{2010 {\em Mathematics Subject Classification.} 16S38, 16G50, 18E30, 16S35}
\thanks{{\em Key words and phrases.} Singularity category, tilting theory,  AS-regular algebra, Gorenstein ring, levelled algebra, Koszul algebra, exceptional sequence, triangulated category, Cohen--Macaulay module}
\maketitle
\tableofcontents

\setcounter{tocdepth}{2}


\section{Introduction}

The singularity category $\xsing(X)$ of an algebraic variety $X$ was introduced by Orlov \cite{Orl04} as an invariant which reflects the properties of the singularities of $X$. It draws inspiration from the homological mirror symmetry conjecture \cite{Kon95}. Analogous categories were defined for (graded) noetherian algebras $R$. In this paper we are interested in the graded singularity category, defined as the Verdier localisation   
\[
	\sing(R):= \bd(\gr R)/\bd(\grproj R).
\]
When $R$ is Gorenstein, this category is of particular importance because it is triangle equivalent to $\grsCM(R)$, the stable category of graded Cohen--Macaulay modules \cite{Buc87, Orl04}, which play a central role in representation theory and commutative algebra (see \cite{Yos90} or \cite{LW12} for nice treatments). \\

Tilting theory is a generalisation of Morita theory, and is a powerful tool in the study of triangulated categories. In fact, tilting objects induce a triangle equivalence between algebraic triangulated categories and the derived category of finite-dimensional algebras \cite{Kel94}. It is thus natural to ask for which graded Gorenstein ring $R$ the singularity category $\sing(R)$ admits tilting objects. This question is very hard to answer in its full generality and has motivated a lot of interesting research over the last years (e.g. \cite{BIY18, DL16a, DL16b, Han19, HIMO14, Kim18,  KST07, KST09, IO13, Yam13}). In this paper, we specialise to the case where $R = eAe$ is an AS-Gorenstein algebra coming from a noetherian AS-regular algebra $A$ and an idempotent $e$. 

\begin{question*}
Let $A$ be a noetherian AS-regular algebra of Gorenstein parameter $\ell$ and $e$ an idempotent. When does $\sing(eAe)$ admit a tilting object? 	
\end{question*}

In this context, tilting objects were discovered in two different settings. The first one was obtained in \cite{AIR15} in the case where $eA_0(1-e) = 0$ and $A$ is bimodule Calabi--Yau of Gorenstein parameter $1$, that is, $A$ is a preprojective algebra over a higher representation-infinite algebra. The second one was given in \cite{IT13} and later generalised in \cite{MU16} in the case where $A=S\#G$ is the skew-group algebra over a Koszul AS-regular algebra $S$, $G \leq \text{GrAut\,} S$ is a finite group whose elements have homological determinant $1$, and $e=\frac{1}{|G|}\sum_{g\in G} g$. In this case, $eAe\cong S^G$. \\

We note that these two results are in some sense at the two ends of a spectrum. On the one hand, we obtain a tilting object in the case where the Gorenstein parameter is $1$. On the other hand, we get a different object when the Gorenstein parameter is equal to the global dimension. We are thus interested in finding tilting objects for the cases where the Gorenstein parameter is more general. For example, we found in \cite{Thi20} a class of skew-group algebras $k[x_1, \ldots, x_d]\#G$ which do not admit a grading endowing them with the structure of a bimodule $d$-Calabi--Yau algebra of Gorenstein parameter $1$, but that naturally have gradings giving them higher Gorenstein parameter which are less than $d$. It would be interesting to determine whether $\sing(S^G)$ admits tilting objects for these gradings.\\

In \cite{Ami13}, the author showed that the result in \cite{AIR15}, that is, when $A$ is a preprojective algebra, can be explained by studying Orlov's embedding \cite{Orl09}  $\Phi: \sing(eAe)\hookrightarrow\bd(\qgr eAe)$, where the target category is the bounded derived category of graded tails. In fact, by a result in \cite{Min12}, $\bd(\qgr eAe)$ has a tilting object which induces a tilting object in $\sing(eAe)$.  A similar technique was also employed in \cite{Ued08} for preprojective algebras of the form $A = k[x_1, \ldots, x_d]\#G$, where $G$ is a cyclic group satisfying a certain property.\\

Inspired by this, we compare two semiorthogonal decompositions: 
\begin{itemize}
\item \cite{Orl09} $\bd(\qgr eAe) \cong \langle \q eAe, \q eAe(1), \ldots, \q eAe(\ell-1), \Phi(\sing(eAe))\rangle$;
\item \cite{MM11} $\bd(\qgr A) \cong \langle \q A, \q A(1), \ldots, \q A(\ell-1)\rangle\cong\bd(\nabla A)$,
\end{itemize}
where $\q: \gr A\to \qgr A$ is the natural quotient functor and $\nabla A$ is the Beilinson algebra. Note that suitable restrictions on $e$ give us an equivalence $\qgr A\cong \qgr eAe$. \\

Using this strategy, we can extend the equivalence  in \cite{AIR15} between $\sing(eAe)$ and the derived category of a finite-dimensional algebra to certain algebras of Gorenstein parameter $2$. 

\begin{thma}
Let $A$ be a locally finite noetherian $d$-AS-regular algebra of Gorenstein parameter $2$. Let $e=e^2\in A$ be such that 
\begin{enumerate}
\item $A/AeA$ is finite-dimensional;
\item $eAe$ is AS-Gorenstein;
\item $eA_0(1-e) = (1-e)A_0e =0$.
\end{enumerate}
Then there is a triangle equivalence 
\[
	\sing(eAe)\cong \bd((1-\tilde e)(\nabla A)(1-\tilde e)),
\]
where $\tilde e$ is the idempotent induced from $e$ in $\nabla A$. 
\end{thma}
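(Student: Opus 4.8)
The plan is to compare Orlov's semiorthogonal decomposition of $\bd(\qgr eAe)$ with the Minamoto--Mori description of $\bd(\qgr A)$, and to realise $\Phi(\sing(eAe))$ as the part of $\bd(\nabla A)$ cut out by the idempotent $1-\tilde e$. First I would invoke hypothesis (1): since $A/AeA$ is finite-dimensional, the quotient functors identify the categories of graded tails, so that $\qgr A\cong\qgr eAe$ and hence $\bd(\qgr A)\cong\bd(\qgr eAe)$. Composing with the equivalence $\bd(\qgr A)\cong\bd(\nabla A)$ of \cite{MM11} yields an equivalence $\Psi\colon\bd(\qgr eAe)\xrightarrow{\sim}\bd(\nabla A)$ and, in particular, a tilting object $T$ of $\bd(\qgr eAe)$ with $\End(T)\cong\nabla A$. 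On the other hand, hypothesis (2) together with the Gorenstein parameter being $2$ makes Orlov's decomposition available: $\bd(\qgr eAe)=\langle\q eAe,\q eAe(1),\Phi(\sing(eAe))\rangle$. Setting $\mathcal E:=\langle\q eAe,\q eAe(1)\rangle$, the theorem reduces to identifying the orthogonal piece $\Phi(\sing(eAe))$ with $\bd\bigl((1-\tilde e)(\nabla A)(1-\tilde e)\bigr)$.

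Next I would split $T$ along $1=\tilde e+(1-\tilde e)$ in $\End(T)\cong\nabla A$, writing $T=T_{\tilde e}\oplus T_{1-\tilde e}$. The summand $T_{\tilde e}$ has $\End(T_{\tilde e})\cong\tilde e(\nabla A)\tilde e\cong\nabla(eAe)$, the Beilinson algebra of $eAe$, which is the endomorphism algebra of $\q eAe\oplus\q eAe(1)$; tracking the generators through $\Psi$ should identify $T_{\tilde e}$ with $\q eAe\oplus\q eAe(1)$, so $\thick(T_{\tilde e})=\mathcal E$. The summand $T_{1-\tilde e}$ does not yet lie in $\Phi(\sing(eAe))$, since the bimodules $\tilde e(\nabla A)(1-\tilde e)$ and $(1-\tilde e)(\nabla A)\tilde e$ are in general nonzero. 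I would therefore apply a levelled mutation: mutating $T_{1-\tilde e}$ across $\mathcal E$ --- carried out one graded level at a time --- produces an object $T_{\Sigma}\in\Phi(\sing(eAe))$ with $\thick(T_{\Sigma})=\Phi(\sing(eAe))$. By Keller's theorem \cite{Kel94}, $T_{\Sigma}$ is then a tilting object giving a triangle equivalence $\Phi(\sing(eAe))\cong\bd(\End(T_{\Sigma}))$, and it remains to compute $\End(T_{\Sigma})$.

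Here hypothesis (3) enters decisively. Writing $\nabla A=\begin{pmatrix}A_0 & A_1\\0 & A_0\end{pmatrix}$ and $\tilde e=\diag(e,e)$, the conditions $eA_0(1-e)=(1-e)A_0e=0$ say exactly that $e$ is central in $A_0$, whence $(1-\tilde e)(\nabla A)\tilde e$ and $\tilde e(\nabla A)(1-\tilde e)$ are strictly upper triangular $2\times2$ matrices. Their product, the round-trip composite $(1-\tilde e)(\nabla A)\tilde e(\nabla A)(1-\tilde e)$, therefore vanishes --- and it is precisely here that the Gorenstein parameter being exactly $2$ is used. Consequently the ideal of endomorphisms of $T_{1-\tilde e}$ factoring through $\add(T_{\tilde e})$ is zero, so the levelled mutation moves $T_{1-\tilde e}$ into $\Phi(\sing(eAe))$ without altering its endomorphism algebra, giving $\End(T_{\Sigma})\cong(1-\tilde e)(\nabla A)(1-\tilde e)$; equivalently, this is the stalk algebra $\nabla A/(\nabla A)\tilde e(\nabla A)$. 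Combining the equivalences yields $\sing(eAe)\cong\Phi(\sing(eAe))\cong\bd\bigl((1-\tilde e)(\nabla A)(1-\tilde e)\bigr)$.

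The main obstacle is the levelled mutation together with the control of its effect on the endomorphism algebra. Two points demand care. First, one must verify that $\Psi$ really carries $\mathcal E$ onto $\thick(\tilde e\,\nabla A)$, so that $\Phi(\sing(eAe))$ corresponds to the subcategory governed by $1-\tilde e$; this means tracking the distinguished objects $\q eAe,\q eAe(1)$ through the chain $\qgr A\cong\qgr eAe$ and $\bd(\qgr A)\cong\bd(\nabla A)$, where the splitting $Ae=eAe\oplus(1-e)Ae$ must be matched correctly against the idempotents of $\nabla A$. Second, one must show that the mutation preserves the tilting property and produces the endomorphism ring $(1-\tilde e)(\nabla A)(1-\tilde e)$ exactly, rather than a deformation with higher corrections. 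It is hypothesis (3) that makes the mutation \emph{levelled}, allowing it to be performed degree by degree with predictable effect on $\Hom$-spaces, and the restriction to Gorenstein parameter $2$ that forces the stalk algebra to coincide with the corner algebra; for larger parameters one would need a longer chain of mutations and a genuinely more delicate endomorphism computation, which is why the result is confined to parameter $2$.
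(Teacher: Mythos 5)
Your opening paragraph sets up exactly the comparison the paper makes (Orlov's decomposition versus the Minamoto--Mori tilting object $T$ with $\End(T)\cong\nabla A$, split along $\tilde e$), but the core of your argument --- the ``levelled mutation'' of $T_{1-\tilde e}$ across $\mathcal E=\langle\q eAe,\q eAe(1)\rangle$, justified by the vanishing of the round-trip ideal $(1-\tilde e)(\nabla A)\tilde e(\nabla A)(1-\tilde e)$ --- has a genuine gap. First, the levelled-mutation machinery requires the pieces being mutated through to be exceptional and, to preserve strongness, a levelled Koszul hypothesis; neither is available here. Theorem A does not assume $eA_0e\cong k$ (that hypothesis belongs to Theorem B), so $\q eAe(i)$, with $\End(\q eAe(i))\cong eA_0e$, need not be exceptional, and $\nabla A$ is not assumed levelled Koszul. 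Second, even granting a mutation in the weaker sense of projection onto the admissible subcategory ${}^{\perp}\mathcal E=\Phi(\sing(eAe))$, vanishing of round trips is not sufficient to conclude that the endomorphism algebra survives. Writing $X=T_{1-\tilde e}$ and $X_{{}^{\perp}\mathcal E}\to X\to X_{\mathcal E}\to X_{{}^{\perp}\mathcal E}[1]$ for the decomposition triangle, one has an exact sequence
\[
	\Hom(X_{\mathcal E},X)\to\End(X)\to\End_{\sbd(\qgr eAe)}(\pi X)\to\Hom(X_{\mathcal E},X[1])\to\Hom(X,X[1])=0,
\]
so you must kill both outer terms. The image of the first map consists of endomorphisms factoring through $X_{\mathcal E}\in\thick(T_{\tilde e})$, which is a priori larger than those factoring through $\add(T_{\tilde e})$ (cones and shifts occur), and the term $\Hom(X_{\mathcal E},X[1])$ --- your ``higher corrections'' --- is never ruled out. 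What actually kills both is not the round-trip vanishing but the two \emph{directional} vanishings $\Hom^{\bullet}(\q e'Ue,\q eAe)\cong eA_0e'=0$ and $\Hom^{\bullet}(\q eAe(1),\q e'Ue)\cong e'A_0e=0$ (hypothesis (3) plus Ext-vanishing from the tilting object $T$): these force $X_{\mathcal E}$ to be a direct sum of unshifted copies of $\q eAe(1)$ and make both outer terms zero. Your stated criterion is strictly weaker than what the argument needs.

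Once you invoke those directional vanishings, the mutation is superfluous, and this is the paper's proof: hypothesis (3), together with the triangular structure of $\nabla A$, lets one reorder the summands of $T$ into a semiorthogonal decomposition
\[
	\bd(\qgr eAe)\cong\langle\q eAe,\ \q e'Ae,\ \q e'Ae(1),\ \q eAe(1)\rangle,
\]
with the $e$-parts at the two ends. By Lemma \ref{lem:orl_quot} (applied twice) the middle block $\langle\q e'Ae,\q e'Ae(1)\rangle$ is a \emph{full} subcategory of $\bd(\qgr eAe)$ carried equivalently by the quotient functor onto $\sbd(\qgr eAe)\cong\sing(eAe)$; hence $\q e'Ue=\q e'Ae\oplus\q e'Ae(1)$, being a direct summand of the tilting object $T$ lying in that block, is automatically tilting there, with endomorphism ring computed in $\bd(\qgr eAe)$, namely $(1-\tilde e)(\nabla A)(1-\tilde e)$, and Keller's theorem finishes. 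Note also that your sentence ``By Keller's theorem, $T_{\Sigma}$ is then a tilting object'' is backwards: Keller's theorem takes a tilting object as input; the Ext-vanishing of the mutated object is precisely the thing that must be proved, and (as the paper stresses) mutations do not preserve it in general.
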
 

We can also get a second class of tilting objects by mutating an exceptional collection in $\bd(\qgr eAe)$.  Unfortunately, mutations of tilting objects are often not tilting objects, so in order to obtain a satisfying result, we must restrict to the case where $\nabla A$ is a levelled Koszul algebra. The notions of levelled mutations and levelled algebras were introduced in \cite{Hil95} and they offer a natural generalisation of the results obtained in \cite{Bon89}. In particular, levelled mutations of tilting objects in levelled Koszul algebras behave well.  With this hypothesis, we obtain the following result.

\begin{thmb}
Let $A$ be a locally finite noetherian $d$-AS-regular algebra of Gorenstein parameter $\ell$. Assume that $\nabla A$ is a levelled Koszul algebra. Let $e=e^2\in A$ be such that 
\begin{enumerate}
\item $A/AeA$ is finite-dimensional;
\item $eAe$ is AS-Gorenstein;
\item $eA_0e\cong k$.
\end{enumerate}
Then there is a triangle equivalence 
\[
	\sing(eAe)\cong \bd((1-\tilde e)(\nabla A)^!(1-\tilde e)),
\]
where $(\nabla A)^!$ is the Koszul dual of $\nabla A$. 
\end{thmb}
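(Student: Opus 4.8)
The plan is to compare the two semiorthogonal decompositions of $\bd(\qgr eAe)$ recorded in the introduction — the Beilinson decomposition coming from \cite{MM11} and Orlov's decomposition from \cite{Orl09} — and to extract a tilting object for the singularity component $\Phi(\sing(eAe))$ by applying levelled mutations to the Beilinson tilting object.

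First I would use condition (1) to identify $\qgr eAe \cong \qgr A$, as the finiteness of $A/AeA$ makes the comparison between $\gr A$ and $\gr eAe$ an equivalence after passing to tails. Combining this with $\bd(\qgr A) \cong \bd(\nabla A)$ from \cite{MM11}, I obtain $\bd(\qgr eAe) \cong \bd(\nabla A)$ and hence a tilting object $T$ with $\End(T) \cong \nabla A$. Its indecomposable summands are indexed by the vertices of the Beilinson algebra, which split, through the induced idempotent $\tilde e$, into an $\tilde e$-part and a $(1-\tilde e)$-part. Condition (3), $eA_0e \cong k$, forces $e$ to be primitive in $A_0$, so that the $\tilde e$-part comprises exactly $\ell$ summands, one at each of the $\ell$ levels of $\nabla A$. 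A rank count then pairs these $\ell$ summands with the $\ell$ line bundles $\q eAe, \ldots, \q eAe(\ell-1)$ of Orlov's decomposition, leaving the $(1-\tilde e)$-summands to account for $\Phi(\sing(eAe))$.

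The core of the argument is to turn this numerical pairing into a genuine identification. Against Orlov's decomposition $\bd(\qgr eAe) \cong \langle \q eAe, \q eAe(1), \ldots, \q eAe(\ell-1), \Phi(\sing(eAe))\rangle$ — available in this form by condition (2) — I would mutate $T$ so that its $\tilde e$-summands generate the admissible subcategory $\langle \q eAe(i)\rangle_{i=0}^{\ell-1}$ and its $(1-\tilde e)$-summands lie in $\Phi(\sing(eAe))$. The hypothesis that $\nabla A$ is a levelled Koszul algebra is decisive here: by the theory of levelled mutations \cite{Hil95}, which generalises \cite{Bon89}, mutating $T$ level by level produces again a tilting object, whose endomorphism algebra is the Koszul dual $(\nabla A)^!$ — just as Bondal's mutation of the Beilinson collection on projective space yields the Koszul-dual algebra. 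Restricting the mutated tilting object to its $(1-\tilde e)$-summands and transporting it through $\Phi^{-1}$ then produces a tilting object of $\sing(eAe)$ with endomorphism algebra $(1-\tilde e)(\nabla A)^!(1-\tilde e)$; Keller's theorem \cite{Kel94} upgrades this to the asserted triangle equivalence.

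The main obstacle I anticipate is controlling the interaction of the two decompositions under mutation. Beyond knowing that levelled mutations preserve tiltedness — which is exactly what the levelled Koszul hypothesis secures — one must verify that the mutated summands split precisely along $\tilde e$, with the $(1-\tilde e)$-part both contained in and generating $\Phi(\sing(eAe))$. Establishing the requisite $\Hom$- and $\Ext$-vanishing between the mutated line-bundle summands and the singularity component, so that passing to the corner at $1-\tilde e$ genuinely computes morphisms inside $\sing(eAe)$, is the delicate step, and it is precisely here that Koszulity and the levelled structure must be deployed in tandem.
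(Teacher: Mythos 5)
Your outline is essentially the paper's own proof: the same two semiorthogonal decompositions, the same use of $eA_0e\cong k$ to make the objects $\q eAe(i)$ exceptional summands of the Beilinson tilting object $T=\oplus_{i=0}^{\ell-1}\q Ae(i)$, the same plan of right levelled mutations, and the same finish via Keller's theorem. However, the step you defer as ``the delicate step'' is where all the content lies, and your guess about where the hypotheses are needed there is misplaced, so let me spell out how it actually goes.

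First, no ``rank count'' is needed: since $Ae=eAe\oplus(1-e)Ae$, the object $\q eAe(i)$ \emph{is} the $\tilde e_i$-summand of $T$, and $\Hom(\q eAe,\q eAe)\cong eA_0e\cong k$ makes it exceptional. Second, the containment-and-generation statement requires no Koszulity at all: after right levelled mutations placing $\q eAe$ at the front of the exceptional collection $\E$ coming from $T$, every remaining object lies in ${}^{\perp}\langle \q eAe\rangle$ simply by the defining property of an exceptional collection, and Lemma \ref{lem:orl_quot} together with Orlov's decomposition identifies the subcategory they generate with $\langle \q eAe(1),\ldots,\q eAe(\ell-1),\Phi(\sing(eAe))\rangle$; iterating this $\ell$ times yields a full exceptional collection in $\Phi(\sing(eAe))\cong\sing(eAe)$. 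Where levelled Koszulity genuinely enters is not the orthogonality against the line-bundle summands (that is automatic from mutation theory) but the \emph{strongness} of what remains and the identification of its endomorphism algebra: the collection obtained is a subcollection of the right dual collection $\E^{\vee}$, which by Corollary \ref{cor:kos_dual} is full strong with $\End(\E^{\vee})\cong(\nabla A)^!$, and the Serre-functor identity $R^{n-s(j_i)}(\q eAe(i))\cong\Se_m^{-1}\bigl(L^{s(j_i)}(\q eAe(i))\bigr)$ shows that the objects removed from $\E^{\vee}$ correspond to the shifted tops $\topm(\tilde e_i\nabla A)[-s(j_i)]$, so the surviving corner is exactly $(1-\tilde e)(\nabla A)^!(1-\tilde e)$. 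Finally, one factual slip: condition (3) does not give one $\tilde e$-summand ``at each of the $\ell$ levels''; the number of levels of $\nabla A$ can exceed $\ell$ (in the paper's $\tfrac{1}{4}(1,1,3,3)$ example $\ell=2$ while $\nabla A$ has four levels), and the objects $\q eAe(i)$ sit at whatever levels $s(j_i)$ they happen to occupy --- the argument never needs them to exhaust the levels.
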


The Beilinson algebras of the algebras considered in \cite{IT13} and \cite{MU16} are levelled Koszul and their idempotent satisfies our conditions. When we restrict to their setting, we obtain the same triangle equivalence. Our theorem covers also certain examples where the Gorenstein parameter is not $1$ nor equal to the global dimension of $A$.


\subsection*{Notation}

Let $k$ be an algebraically closed field. If $V$ is a vector space, denote by $D(V):= \Hom_k(V, k)$ the $k$-dual. If $R$ is a ring, we denote the opposite ring by $R^{\op}$. Let $\Mod R$ be the category of right modules and $\Proj R$ be the category of projective right modules. Let $\mathsf{Gr}\,R$ be the category of graded right $R$-modules. The categories written with lower case letters denote the respective subcategories of finitely generated modules. Let $\mathsf D(-)$ be the derived category and $\bd(-)$ be the bounded derived category. If $\TT$ is a triangulated category, let $\Hom_{\TT}^{\bullet}(A,B):=\oplus_{i\in\Z}\Hom(A,B[i])$. 


\section*{Acknowledgement}
 
The author would like to thank Mads Hustad Sand\o y for valuable comments. 


\section{Preliminaries}


\subsection{Koszul algebras}

Let $A = \bigoplus_{i\geq 0} A_i$ be a positively graded $k$-algebra such that $S := A_0$ is a finite-dimensional semisimple algebra. Throughout this section, all tensor products are over $S$. We define Koszul algebras, following mostly \cite{BGS96}. Combined with a levelled property, defined in the next subsection, these algebras behave well under mutation, a tool which we will need. 

\begin{definition}
The algebra $A$ is \emph{Koszul} if $S$, considered as a right graded $A$-module, admits a graded projective resolution
\[
	\cdots \to P^2 \to P^1 \to P^0\to S\to 0
\]
such that $P^{\ell}$ is generated in degree $\ell$.
\end{definition}
 
Koszul algebras are \emph{quadratic}, that is, there is a graded isomorphism
\[
	A\cong T_S V/( R ),
\]
where $V$ is a $S$-bimodule placed in degree $1$ and $R$ is a $S$-bimodule such that $R\subset V\otimes V$. Here $( R )$ denotes the $2$-sided ideal generated by $R$.\\

To every Koszul algebra one can associate its Koszul dual. 

\begin{definition}
Let $A$ be a Koszul algebra. The \emph{Koszul dual}, denoted by $A^!$, is the graded algebra 
\[
	A^!:=\bigoplus_{i\in\N}\bigoplus_{j\in\Z}\Ext_{\Gr A}^{i}(A_0, A_0(j)). 
\]
\end{definition}  

We have the following properties of Koszul duals. 

\begin{theorem}
Let $A$ be a Koszul algebra and suppose that all $A_i$ are finitely generated left $A_0$-modules. Then
\begin{itemize}
\item $A^!$ is a Koszul algebra;
\item There is an isomorphism of graded algebras $A\cong (A^!)^!$. 
\end{itemize}
\end{theorem}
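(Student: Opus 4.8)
The plan is to reduce everything to the \emph{quadratic dual} and to exploit its manifest self-duality. Since $A$ is Koszul it is quadratic, so $A\cong T_S V/(R)$ with $R\subset V\otimes V$. Set $W:=D(V)$ and let $R^{\perp}\subset W\otimes W$ be the annihilator of $R$ under the perfect pairing $(W\otimes W)\times (V\otimes V)\to k$; the identification $D(V\otimes V)\cong D(V)\otimes D(V)$ and the non-degeneracy of this pairing are exactly where the hypothesis that each $A_i$ is a finitely generated $A_0$-module enters, since it forces $V$ to be finite-dimensional over $k$. Define the \emph{quadratic dual} $A^{\vee}:=T_S W/(R^{\perp})$. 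Purely formally, dualising again sends $(V,R)\mapsto (W,R^{\perp})\mapsto (D(W),(R^{\perp})^{\perp})=(V,R)$, using $D(D(V))=V$ and $(R^{\perp})^{\perp}=R$ for a subspace of a finite-dimensional space; hence $A^{\vee}$ is again quadratic and $(A^{\vee})^{\vee}\cong A$ as graded algebras.

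The first real step is to identify the homologically defined $A^{!}$ with the combinatorially defined $A^{\vee}$. I would take the minimal graded projective resolution $P^{\bullet}\to S$ afforded by Koszulity, so that $P^{i}$ is generated in degree $i$, and write $T_i:=(P^i)_i\subset V^{\otimes i}$ for the space of top-degree generators, with $T_0=S$, $T_1=V$, $T_2=R$. Because the resolution is minimal, the differentials induced on $\Hom_{\Gr A}(P^{\bullet},S(j))$ vanish, so $\Ext^{i}_{\Gr A}(S,S(j))$ is nonzero for exactly one value of $j$ (the Koszul purity), where it equals the $S$-bimodule dual $D(T_i)$; in particular $A^{!}$ collapses onto a single diagonal with $(A^{!})_i\cong D(T_i)$. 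It then remains to match the Yoneda product with the multiplication of $A^{\vee}$. Concretely, one shows $T_i=\bigcap_{a+b=i-2}V^{\otimes a}\otimes R\otimes V^{\otimes b}$ inside $V^{\otimes i}$; dualising, and using that annihilators turn intersections into sums together with $(V^{\otimes a}\otimes R\otimes V^{\otimes b})^{\perp}=W^{\otimes a}\otimes R^{\perp}\otimes W^{\otimes b}$, one gets $T_i^{\perp}=\sum_{a+b=i-2}W^{\otimes a}\otimes R^{\perp}\otimes W^{\otimes b}$, so that $(A^{!})_i=D(T_i)=W^{\otimes i}/T_i^{\perp}$ is precisely the degree-$i$ component $(A^{\vee})_i$ of the quadratic dual. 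Checking that the Yoneda composition $\Ext^{1}\otimes\Ext^{i}\to\Ext^{i+1}$ becomes the multiplication $W\otimes(A^{\vee})_i\to (A^{\vee})_{i+1}$ then upgrades this to a graded algebra isomorphism $A^{!}\cong A^{\vee}$.

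It remains to prove that $A^{!}$ is Koszul. Here I would invoke the Koszul complex criterion: a quadratic algebra $B=T_S U/(Q)$ is Koszul if and only if the complex
\[
\cdots \to B\otimes_S D\big((B^{\vee})_2\big)\to B\otimes_S D\big((B^{\vee})_1\big)\to B\to S\to 0,
\]
with the canonical Koszul differential, is a resolution of $S$. The point is that this complex is symmetric under interchanging $B\leftrightarrow B^{\vee}$: applying $D(-)$ termwise and reindexing turns the complex of $A$-modules into the analogous complex built from $A^{\vee}$ (now as a one-sided resolution), so that exactness of one is equivalent to exactness of the other. Thus Koszulity of $A$ and of $A^{\vee}\cong A^{!}$ are literally the same condition. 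Finally, since $A^{!}$ is Koszul, the identification of the previous paragraph applies to it and yields $(A^{!})^{!}\cong (A^{!})^{\vee}=(A^{\vee})^{\vee}\cong A$, which is the desired isomorphism of graded algebras.

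I expect the main obstacle to be the middle step, and within it the verification that the Yoneda product on $\bigoplus_{i,j}\Ext^{i}_{\Gr A}(S,S(j))$ corresponds to the tensor-algebra multiplication of $A^{\vee}$: computing the $\Ext$-groups and their diagonal concentration is immediate from minimality, but tracking the ring structure requires a careful chain-level comparison of the Koszul resolution with the bar resolution, or an explicit lift of the Yoneda composite. The self-duality of the Koszul complex in the last step is conceptually clean but likewise demands care in matching left and right module structures and the internal gradings.
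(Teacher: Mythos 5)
The paper does not prove this theorem at all: it is quoted as a known result, with the surrounding text following \cite{BGS96}, so there is no "paper proof" to match against. Your argument is, in outline, exactly the classical proof from that literature — identify the Yoneda algebra $A^!$ with the quadratic dual via the minimal (pure) resolution and the lattice description $T_i=\bigcap_{a+b=i-2}V^{\otimes a}\otimes R\otimes V^{\otimes b}$, then get Koszulity of the dual from termwise dualization of the internal-degree components of the Koszul complex — and it is correct, including your identification of where the finiteness hypothesis enters. One point, however, needs repair in the paper's actual setting: $S=A_0$ is a finite-dimensional semisimple algebra, not the ground field, $V$ is an $S$-bimodule, and all tensor products are over $S$. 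For $S$-bimodules the $k$-dual reverses tensor factors, $D(M\otimes_S N)\cong D(N)\otimes_S D(M)$, so the identification $D(V\otimes_S V)\cong D(V)\otimes_S D(V)$ you use to define the pairing is not the naive one; the upshot is that the Ext-algebra is canonically isomorphic to the \emph{opposite} of the naively defined quadratic dual (this is why the corresponding statement in \cite{BGS96} carries an $(-)^{\op}$), or equivalently one must define $R^{\perp}$ via the twisted pairing. Since $k$ is algebraically closed and $S$ is semisimple, $S$ is a symmetric algebra, so $\Hom_S(-,S)\simeq D(-)$ and this order-reversal is the only adjustment needed; moreover it is harmless for the two assertions being proved, because taking opposites commutes with quadratic duality and the two twists cancel in $(A^!)^!\cong A$. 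With that bookkeeping made explicit (and the matching left/right module conventions in the Koszul-complex step), your proof is the standard and complete one.
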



\subsection{Levelled exceptional collections and mutations}

We explain the notion of levelled exceptional collections and levelled mutations, which were first studied in \cite{Hil95} as a natural generalisation of concepts introduced in \cite{Bon89}. These will turn out to be essential tools in the proof of our main theorem, the idea being that levelled mutations behave well under a Koszulity assumption. \\

Let $\TT$ be a $k$-linear Krull--Schmidt triangulated category, which is of finite type, that is, for any objects $A, B\in \TT$, the vector space
\[
	\bigoplus_{i\in \Z}\Hom_{\TT}(A, B[i])
\]
is finite-dimensional. Let $\Sub\subset \TT$ be a full triangulated subcategory. The \emph{right orthogonal} to $\Sub$, denoted by $\Sub^{\perp}$, is the full triangulated subcategory 
\[
	\Sub^{\perp}:= \{A\in\TT\,\,|\,\,\Hom_{\TT}(B,A) =0 \text{ for any $B\in\Sub$}\}.
\]
Dually, the \emph{left orthogonal} to $\Sub$, denoted by ${^{\perp}\Sub}$, is the full triangulated subcategory
\[
	{^{\perp}\Sub}:= \{A\in\TT\,\,|\,\,\Hom_{\TT}(A,B) =0 \text{ for any $B\in\Sub$}\}.
\]
The subcategory $\Sub$ is said to be \emph{right admissible} (resp. \emph{left admissible}) if there exists a functor $\TT\to \Sub$, which is right (resp. left) adjoint to an embedding $\Sub \to \TT$.  It is \emph{admissible} if it is both left and right admissible. 

\begin{lemma}[{\cite[Lemma 1.4]{Orl09}}]
\label{lem:orl_quot}
Let $\Sub$ be a full triangulated subcategory of $\TT$. If $\Sub$ is right (resp. left) admissible, then $\TT/\Sub\cong \Sub^{\perp}$ (resp. $\TT/\Sub\cong {^{\perp}\Sub}$). 
\end{lemma}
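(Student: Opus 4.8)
The plan is to prove the right-admissible statement and to obtain the left-admissible one by passing to the opposite category. Write $i\colon\Sub\hookrightarrow\TT$ for the inclusion and $j\colon\TT\to\Sub$ for its right adjoint, with counit $\epsilon\colon ij\to\mathrm{id}_{\TT}$. The crucial first step is to construct, functorially in $X\in\TT$, a triangle
\[
	ij(X)\xrightarrow{\ \epsilon_X\ } X\longrightarrow X'\longrightarrow ij(X)[1],
\]
in which $ij(X)\in\Sub$ and, as I will check, $X'\in\Sub^{\perp}$. To verify that the cone $X'$ lies in $\Sub^{\perp}$ I would apply $\Hom_{\TT}(S,-)$ for arbitrary $S\in\Sub$ to this triangle and argue that $\epsilon_X$ induces an isomorphism $\Hom_{\TT}(S,ij(X))\xrightarrow{\sim}\Hom_{\TT}(S,X)$. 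This is exactly where right admissibility enters: using the adjunction together with the full faithfulness of $i$ (so that $ji\cong\mathrm{id}_{\Sub}$) one identifies both groups naturally with $\Hom_{\Sub}(S,j(X))$, the comparison map being the identity. The long exact sequence then forces $\Hom_{\TT}(S,X')=0$ for every $S\in\Sub$, i.e.\ $X'\in\Sub^{\perp}$. I expect this identification to be the main obstacle; the remainder is formal.

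From this triangle I would extract two facts. First, the assignment $P\colon X\mapsto X'$ underlies a triangle functor $\TT\to\Sub^{\perp}$ which is \emph{left} adjoint to the inclusion $\Sub^{\perp}\hookrightarrow\TT$: applying $\Hom_{\TT}(-,W)$ for $W\in\Sub^{\perp}$ to the triangle and using $\Hom_{\TT}(ij(X)[n],W)=0$ yields $\Hom_{\TT}(X,W)\cong\Hom_{\Sub^{\perp}}(P(X),W)$. Second, $P$ annihilates $\Sub$: if $X\in\Sub$ then $\epsilon_X$ is an isomorphism (again because $ji\cong\mathrm{id}_{\Sub}$), so $X'=0$. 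Consequently, by the universal property of the Verdier quotient $Q\colon\TT\to\TT/\Sub$, the functor $P$ factors as $P=\bar P\circ Q$ for a unique triangle functor $\bar P\colon\TT/\Sub\to\Sub^{\perp}$.

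Finally I would check that $\bar P$ is a quasi-inverse to $F:=Q\circ i'$, where $i'\colon\Sub^{\perp}\hookrightarrow\TT$ is the inclusion. For $W\in\Sub^{\perp}$ one has $j(W)=0$ (take $S=j(W)$ in $\Hom_{\Sub}(S,j(W))\cong\Hom_{\TT}(S,W)=0$), hence $P(W)\cong W$, giving $\bar P\circ F\cong\mathrm{id}_{\Sub^{\perp}}$. In the other direction, applying $Q$ to the defining triangle kills $ij(X)\in\Sub$, so $Q(\epsilon_X)$ is an isomorphism and $F\bar P(Q(X))=Q(X')\cong Q(X)$; since every object of $\TT/\Sub$ has the form $Q(X)$, this gives $F\circ\bar P\cong\mathrm{id}_{\TT/\Sub}$. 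Thus $F$ is an equivalence $\Sub^{\perp}\cong\TT/\Sub$. For the left-admissible case I would apply this result in $\TT^{\op}$, in which $\Sub$ becomes right admissible, $\Sub^{\perp}$ becomes ${}^{\perp}\Sub$, and $\TT/\Sub$ is replaced by $(\TT/\Sub)^{\op}$, and then dualise back.
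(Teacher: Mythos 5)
This lemma is quoted in the paper from \cite[Lemma 1.4]{Orl09} without proof, so there is no internal argument to compare against; your proof is correct and is the standard one (essentially Bondal--Kapranov's, which is what Orlov invokes): the counit triangle $ij(X)\to X\to X'\to ij(X)[1]$ with $X'\in\Sub^{\perp}$, followed by descending $P\colon X\mapsto X'$ through the Verdier quotient and checking it is quasi-inverse to $Q\circ i'$. The two points you assert without detail are both standard and fillable: functoriality (and exactness) of $X\mapsto X'$ follows from the universal property $\Hom_{\TT}(X,W)\cong\Hom_{\TT}(X',W)$ for all $W\in\Sub^{\perp}$ together with the fact that adjoints of exact functors are exact, and the objectwise isomorphism $F\bar P Q\cong Q$ upgrades to a natural isomorphism $F\bar P\cong\mathrm{id}_{\TT/\Sub}$ because precomposition with the localisation functor $Q$ is fully faithful on functor categories.
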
 

For a set $\Omega$ of objects in $\TT$, denote by $\langle \Omega\rangle$ the smallest full triangulated subcategory containing the elements of $\Omega$ and closed under isomorphism and direct summands.  

\begin{definition}
A sequence of full triangulated subcategories $(\Sub_0, \ldots, \Sub_m)$ in $\TT$ is called a \emph{semiorthogonal decomposition} if all $\Sub_i$ are admissible in $\TT$ and there is a sequence of left admissible subcategories $\TT_0=\Sub_0\subset \TT_1\subset\cdots\subset \TT_m = \TT$ such that $\Sub_i$ is left orthogonal to $\TT_{i-1}$ in $\TT_i$. In this case we write $\TT = \langle \Sub_0, \ldots, \Sub_m\rangle$. 
\end{definition}

The simplest semiorthogonal decompositions come from exceptional collections. 

\begin{definition}
An object $E\in \TT$ is said to be \emph{exceptional} if $\Hom_{\TT}(E,E[\ell]) = 0$ when $\ell\not=0$ and $\Hom_{\TT}(E,E) = k$. An \emph{exceptional collection} $\E$ is a sequence of exceptional objects $(E_0,\ldots, E_m)$ in $\TT$ such that if $0\leq i<j\leq m$, then $\Hom_{\TT}(E_j, E_i[\ell]) = 0$ for all $\ell\in\Z$. It is \emph{full} if $\TT = \langle\oplus_{\ell=1}^m E_{\ell}\rangle$. Moreover, it is \emph{strong} if, in addition, $\Hom_{\TT}(E_i, E_j[\ell]) = 0$ for all $i$ and $j$ and $\ell\not =0$. 
\end{definition}

If $\TT$ has a full exceptional collection $\E = (E_0,\ldots, E_m)$, then it admits a semiorthogonal decomposition $(\Sub_0,\ldots, \Sub_m)$, where $\Sub_{\ell} = \langle E_{\ell}\rangle\cong \bd(k)$. \\

To every exceptional collection $\E$, we associate a graded finite-dimensional algebra
\[
	\End(\E):= \bigoplus_{\ell\geq 0}\bigoplus_{j-i=\ell}\Hom_{\TT}(E_i, E_j).
\]
This algebra is finite-dimensional and has finite global dimension. \\

Strong full exceptional collections are closely related to tilting objects. We are also interested in the weaker notion of silting objects. 

\begin{definition}
Let $U\in\TT$ be an object. We say that $U$ is \emph{tilting} (resp. \emph{silting}) if $\Hom_{\TT}(U, U[i]) = 0$ for any $i\not =0$ (resp. $i>0$) and $\langle U\rangle = \TT$. 
\end{definition}

Note that an exceptional sequence $\E = (E_0,\ldots, E_m)$ is full and strong if and only if $\oplus_{i=0}^m E_{i}$ is a tilting object in $\TT$. Tilting objects give a nice equivalence for algebraic triangulated categories. 

\begin{theorem}[{\cite[Theorem 4.3]{Kel94}}]
\label{thm:keller}
Let $\TT$ be an algebraic Krull--Schmidt triangulated category with a tilting object $U$. If $\gldim \End_{\TT}(U)<\infty$, then there is a triangle equivalence 
\[
	\Hom_{\TT}^{\bullet}(U, -): \TT\xrightarrow{\sim} \bd(\End_{\TT}(U)). 
\]
\end{theorem}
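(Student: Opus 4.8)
The plan is to prove this via derived Morita theory for differential graded (DG) algebras, the indispensable input being that $\TT$ is \emph{algebraic}. Concretely, being algebraic means $\TT$ is triangle equivalent to the stable category of a Frobenius exact category, and from such a presentation one obtains a DG enhancement: a small DG category $\mathcal{A}$ together with a triangle equivalence identifying $\TT$ with a full triangulated subcategory of the derived category $\DD(\mathcal{A})$, under which the tilting object $U$ lifts to an object of $\mathcal{A}$. The overall strategy is to replace $U$ by a DG model of its endomorphism algebra and then recognise $\TT$ as the perfect derived category of that algebra.

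First I would form the DG endomorphism algebra $\Gamma := \mathcal{R}\!\End(U)$ computed in the enhancement. Its cohomology is
\[
	\Ho^n(\Gamma)\cong \Hom_{\TT}(U, U[n]),
\]
so the tilting hypothesis $\Hom_{\TT}(U, U[i]) = 0$ for $i\neq 0$ forces the cohomology of $\Gamma$ to be concentrated in degree $0$, where it is the ordinary algebra $\Lambda := \End_{\TT}(U)$. A DG algebra whose cohomology is supported in a single degree is quasi-isomorphic to that cohomology algebra; hence $\Gamma$ is quasi-isomorphic to $\Lambda$, and a quasi-isomorphism of DG algebras induces an equivalence $\DD(\Gamma)\cong\DD(\Lambda)$ carrying perfect complexes to perfect complexes, so $\mathsf{per}(\Gamma)\cong\mathsf{per}(\Lambda)$.

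Next I would apply the derived Morita theorem in the following form: the functor $\RHom(U, -)$ sends $\TT$ into $\DD(\Gamma)$, takes $U$ to the free module $\Gamma$, and restricts to a triangle equivalence from $\thick_{\TT}(U)$ onto $\mathsf{per}(\Gamma) = \thick_{\DD(\Gamma)}(\Gamma)$. The crux is full faithfulness: on the generator one has $\RHom(U, U)\cong\Gamma\cong\End_{\DD(\Gamma)}(\Gamma)$ by construction, and this isomorphism propagates to every object of $\thick(U)$ by a dévissage along distinguished triangles and shifts, using the five lemma and exactness of $\RHom(U,-)$. Since $U$ is tilting, $\thick_{\TT}(U) = \langle U\rangle = \TT$, so we obtain a triangle equivalence $\TT\xrightarrow{\sim}\mathsf{per}(\Gamma)\cong\mathsf{per}(\Lambda)$ which on cohomology agrees with $\Hom_{\TT}^{\bullet}(U, -)$.

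Finally I would invoke the hypothesis $\gldim\Lambda<\infty$ to identify the target. Over a ring of finite global dimension every finitely generated module admits a finite projective resolution, so each object of $\bd(\modu\Lambda)$ is perfect; conversely every perfect complex lies in $\bd(\modu\Lambda)$. Hence $\mathsf{per}(\Lambda) = \bd(\End_{\TT}(U))$, and composing yields the desired equivalence $\Hom_{\TT}^{\bullet}(U,-)\colon\TT\xrightarrow{\sim}\bd(\End_{\TT}(U))$. The main obstacle is the derived Morita step itself: securing a functorial DG enhancement (this is exactly where algebraicity is used) and verifying that $\RHom(U,-)$ is fully faithful on $\thick(U)$ with image precisely the perfect complexes. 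By comparison, the formality of $\Gamma$ and the finite-global-dimension identification of $\mathsf{per}(\Lambda)$ with $\bd(\modu\Lambda)$ are routine.
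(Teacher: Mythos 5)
The paper does not actually prove this statement --- it imports it verbatim from Keller \cite{Kel94}, so there is no internal proof to compare against. Your proposal is a correct reconstruction of the standard argument (essentially Keller's own): use algebraicity to get a DG enhancement, observe that the DG endomorphism algebra $\Gamma$ of $U$ has cohomology concentrated in degree $0$ and is therefore quasi-isomorphic (via the truncation zigzag $\Gamma \leftarrow \tau_{\leq 0}\Gamma \to \Ho^0(\Gamma) = \Lambda$) to $\Lambda = \End_{\TT}(U)$, run the d\'evissage to show $\RHom(U,-)$ is fully faithful on $\thick(U) = \TT$ with image $\mathsf{per}(\Gamma) \cong \mathsf{per}(\Lambda)$, and identify $\mathsf{per}(\Lambda)$ with $\bd(\modu\Lambda)$ using finite global dimension. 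Two points you leave implicit are worth naming: first, the claim that the essential image is \emph{all} of $\mathsf{per}(\Gamma)$ (rather than a dense subcategory) needs $\TT$ to have split idempotents --- this is exactly where the Krull--Schmidt hypothesis in the statement is used, and you never invoke it; second, the identification $\mathsf{per}(\Lambda) = \bd(\modu\Lambda)$ needs finitely generated modules to admit finite resolutions by finitely generated projectives, which follows here because $\Lambda$ is a finite-dimensional algebra (the paper's ambient setting is Hom-finite), not from finite global dimension alone. Neither point is a genuine gap in the context of the paper, where both hypotheses are available.
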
 

We are interested in mutating exceptional collections coming from certain tilting objects in order to get tilting objects in the singularity categories we study.  

\begin{definition}
Let $E\in\TT$ be an exceptional object and $X\in{^{\perp}\langle E\rangle}$. The \emph{left mutation of $X$ through $E$}, denoted by $L_E(X)\in \langle E\rangle^{\perp}$, is defined up to isomorphism by the triangle 
\[
	L_E(X)\to\Hom^{\bullet}_{\TT}(E,X)\otimes E\xrightarrow{ev} X\to L_E(X)[1],
\]
where $ev$ is the evaluation map. Dually, if $X\in \langle E\rangle {^{\perp}}$, we define the \emph{right mutation of $X$ through $E$}, denoted by $R_E(X)\in{^{\perp}\langle E\rangle }$, by the triangle 
\[
	X\xrightarrow{coev}D\Hom^{\bullet}_{\TT}(X,E)\otimes E\to R_E(X)\to X[1],
\]
where $coev$ is the coevaluation map. If $\E = (E_0,\ldots, E_{\ell})$ is an exceptional collection and $X\in {^{\perp}\E}$, then we define 
\[
	L_{\E}(X):= L_{E_0}\cdots L_{E_{\ell}}(X)\in \E^{\perp}. 
\]
Similarly, if $X\in\E^{\perp}$, we define 
\[
	R_{\E}(X):= R_{E_{\ell}}\cdots R_{E_0}(X)\in ^{\perp}\E.
\]
\end{definition}

\begin{definition}
An exceptional collection $\E = (E_0, \ldots, E_m)$ is \emph{$n$-levelled} if there exists a surjective monotonic map $s: \{0,\ldots, m\} \to \{0,\ldots, n\}$ such that 
\[
	\Hom_{\TT}^{\bullet}(E_i, E_j) = 0 \text{ for all $i\not = j$ for which $s(i) = s(j)$}.
\]  
The subcollections $\E_i:= (E_{i_0}, \ldots, E_{i_{\mu}})_{i_0,\ldots, i_{\mu}\in s^{-1}(i)}$ are called \emph{levels}. We define the \emph{right levelled mutations} on levelled exceptional collections as follows:
\[
	 R_i(\E_1, \ldots, \E_{m}) := (\E_1,\ldots, \E_{i-1}, \E_{i+1}, R_{\E_{i+1}}(\E_i), \E_{i+2},\ldots, \E_m),
\]
where, if $\E_i = (E_{i_0}, \ldots, E_{i_{\mu}})$, then 
\[
	R_{\E_{i+1}}(\E_i) := (R_{\E_{i+1}}(E_{i_0}), \ldots, R_{\E_{i+1}}(E_{i_{\mu}})).
\]
The \emph{left levelled mutations} $L_i$ are defined similarly:
\[
	L_i(\E_1, \ldots, \E_{m}):= (\E_1,\ldots, \E_{i-2}, L_{\E_{i-1}}(\E_i), \E_{i-1}, \E_{i+1},\ldots, \E_m).
\]  
\end{definition}

\begin{proposition}[{\cite[Proposition 4.2]{Hil95}}]
Let $\E$ be a levelled exceptional collection. Then $R_i\E$ and $L_i\E$ are also levelled exceptional. Moreover, if $\E$ is full, then these are also full. 
\end{proposition}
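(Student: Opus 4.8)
The plan is to reduce a levelled mutation to the classical theory of mutations through exceptional objects, the new feature being that each level is \emph{completely orthogonal}: by the levelled hypothesis $\Hom_{\TT}^{\bullet}(E_a,E_b)=0$ whenever $a\neq b$ lie in the same level $s^{-1}(i)$, while the exceptional condition already supplies the one-directional vanishing between distinct levels. Passing to $\TT^{\op}$ interchanges $R_i$ and $L_i$, so it suffices to treat the right levelled mutation $R_i$; the statement for $L_i$ then follows by the dual argument. Throughout write $\E_{i+1}=(F_0,\dots,F_\nu)$ and $\E_i=(E_{i_0},\dots,E_{i_\mu})$.

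The crux of the argument is a \emph{simultaneous mutation through a level}. Since level $i+1$ follows level $i$ in $\E$, exceptionality gives $\Hom_{\TT}^{\bullet}(F_k,E_{i_p})=0$ for all $k,p$, so each $E_{i_p}$ lies in $\langle\E_{i+1}\rangle^{\perp}$ and $R_{\E_{i+1}}(E_{i_p})=R_{F_\nu}\cdots R_{F_0}(E_{i_p})$ is defined. The point to check is that these successive object-wise mutations can be carried out and assemble into the single triangle
\[
E_{i_p}\to\bigoplus_{k=0}^{\nu}D\Hom_{\TT}^{\bullet}(E_{i_p},F_k)\otimes F_k\to R_{\E_{i+1}}(E_{i_p})\to E_{i_p}[1].
\]
Here the within-level orthogonality $\Hom_{\TT}^{\bullet}(F_k,F_{k'})=0$ is essential: applying $\Hom_{\TT}^{\bullet}(F_{k+1},-)$ to the triangle defining $R_{F_k}\cdots R_{F_0}(E_{i_p})$ and using $\Hom_{\TT}^{\bullet}(F_{k+1},F_j)=0$ together with $\Hom_{\TT}^{\bullet}(F_{k+1},E_{i_p})=0$ shows that the partially mutated object still lies in $\langle F_{k+1}\rangle^{\perp}$, so the next mutation is defined and only adjoins the summand indexed by $F_{k+1}$. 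Equivalently, $R_{\E_{i+1}}$ realises the mutation equivalence $\langle\E_{i+1}\rangle^{\perp}\xrightarrow{\sim}{^{\perp}\langle\E_{i+1}\rangle}$ through the admissible subcategory generated by the orthogonal collection $\E_{i+1}$. As this functor is a triangle equivalence it preserves all graded morphism spaces, whence $\Hom_{\TT}^{\bullet}\!\big(R_{\E_{i+1}}(E_{i_p}),R_{\E_{i+1}}(E_{i_q})\big)\cong\Hom_{\TT}^{\bullet}(E_{i_p},E_{i_q})$, so each mutated object is exceptional and $R_{\E_{i+1}}(\E_i)$ is again completely orthogonal, i.e. a level.

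It then remains to verify the semiorthogonality of the full reordered sequence $R_i\E=(\E_1,\dots,\E_{i-1},\E_{i+1},R_{\E_{i+1}}(\E_i),\E_{i+2},\dots,\E_m)$. For the swapped pair, $R_{\E_{i+1}}(E_{i_p})\in{^{\perp}\langle\E_{i+1}\rangle}$ is exactly the required vanishing $\Hom_{\TT}^{\bullet}(R_{\E_{i+1}}(\E_i),\E_{i+1})=0$. Every other relation reduces to an original exceptional relation by applying $\Hom_{\TT}^{\bullet}(-,E_j)$ or $\Hom_{\TT}^{\bullet}(E_j,-)$ to the triangle above: its three terms are built from $E_{i_p}$ and the $F_k$, and for $E_j$ in any other level the long exact sequence forces the desired vanishing precisely when the original collection does, because $\Hom_{\TT}^{\bullet}(E_j,E_{i_p})$ and $\Hom_{\TT}^{\bullet}(E_j,F_k)$ (resp. the reversed spaces) vanish in the relevant range. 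Collecting these, $R_i\E$ is exceptional, and assigning to the reordered blocks the evident monotone surjection $s'$ exhibits it as levelled.

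Finally, fullness is preserved because mutation leaves invariant the subcategory generated by the two levels involved: from the triangle of the second paragraph, $R_{\E_{i+1}}(E_{i_p})\in\langle\E_{i+1},E_{i_p}\rangle$ and conversely $E_{i_p}\in\langle\E_{i+1},R_{\E_{i+1}}(E_{i_p})\rangle$, so $\langle\E_i,\E_{i+1}\rangle=\langle\E_{i+1},R_{\E_{i+1}}(\E_i)\rangle$; since all other levels are untouched, $\langle R_i\E\rangle=\langle\E\rangle=\TT$. I expect the main obstacle to be the second paragraph, namely ensuring that the object-wise mutations composing $R_{\E_{i+1}}$ remain defined at each stage and that the output stays completely orthogonal — this is exactly where the levelled hypothesis, rather than mere exceptionality, is needed.
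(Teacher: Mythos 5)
The paper offers no proof of this proposition; it is quoted verbatim from \cite[Proposition 4.2]{Hil95}, so your argument can only be judged on its own terms. Your overall skeleton (inductive well-definedness of the iterated mutation, Hom-preservation because mutation through the admissible subcategory $\langle\E_{i+1}\rangle$ is an equivalence $\langle\E_{i+1}\rangle^{\perp}\to{}^{\perp}\langle\E_{i+1}\rangle$, semiorthogonality via long exact sequences, fullness via invariance of generated subcategories) is the standard route, and all the conclusions you draw are correct. However, the triangle you call the crux is \emph{false} with the paper's definition of $R_{\E_{i+1}}$ as the composite $R_{F_\nu}\cdots R_{F_0}$: each single right mutation contributes a shift, so the copies of $F_k$ enter the iterated cone shifted by $[k]$, not as one unshifted direct sum. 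Concretely, take $\TT=\bd(kQ)$ for the quiver $Q\colon 1\to 3\leftarrow 2$ and the levelled full strong exceptional collection $\E=(\E_0,\E_1)$ with $\E_0=(P_3)$, $\E_1=(P_1,P_2)$ (note $\Hom^{\bullet}_{\TT}(P_1,P_2)=0=\Hom^{\bullet}_{\TT}(P_2,P_1)$). Then $R_{P_1}(P_3)=\operatorname{cone}(P_3\hookrightarrow P_1)\cong S_1$, and $\Hom_{\TT}^{\bullet}(S_1,P_2)=\Ext^1(S_1,P_2)=k$ sits in degree $1$, so $R_{P_2}(S_1)=\operatorname{cone}(S_1\to P_2[1])\cong M[1]$, where $M$ is the indecomposable of dimension vector $(1,1,1)$. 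Your displayed triangle would instead give $\operatorname{cone}(P_3\to P_1\oplus P_2)\cong M$; and no triangle $P_3\to P_1\oplus P_2\to M[1]\to P_3[1]$ can exist, since every morphism $P_3\to P_1\oplus P_2$ is a map of modules and none of its possible cones is $M[1]$.

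The error is repairable because everything you deduce from that triangle is shift-invariant, but the repairs should be made explicit. First, well-definedness of $R_{F_\nu}\cdots R_{F_0}(E_{i_p})$ needs only exceptionality of $\E$ (the vanishing $\Hom_{\TT}^{\bullet}(F_{k+1},F_j)=0$ for $j\le k$ is the later-to-earlier vanishing already built into any exceptional collection), not the levelled hypothesis; this is why the paper can define $R_{\E}(X)$ for an arbitrary exceptional collection $\E$. Second, your semiorthogonality and generation arguments should be run inductively on the single-step triangles $R^{(k)}\to D\Hom_{\TT}^{\bullet}(R^{(k)},F_{k+1})\otimes F_{k+1}\to R^{(k+1)}\to R^{(k)}[1]$, whose outer terms are built from $E_{i_p}$ and shifted copies of the $F_j$; the long exact sequences then give exactly the vanishings and the equality $\langle\E_i,\E_{i+1}\rangle=\langle\E_{i+1},R_{\E_{i+1}}(\E_i)\rangle$ you want. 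Third, the iterated mutation agrees with the orthogonal-projection equivalence $\langle\E_{i+1}\rangle^{\perp}\to{}^{\perp}\langle\E_{i+1}\rangle$ only up to the shift $[\nu]$ (equivalently, $R_{\E_{i+1}}(E_{i_p})$ is the $[\nu]$-shift of the cone of the coevaluation $E_{i_p}\to\bigoplus_k D\Hom_{\TT}^{\bullet}(E_{i_p},F_k)\otimes F_k$); since shifts are triangle equivalences this still yields $\Hom_{\TT}^{\bullet}(R_{\E_{i+1}}(E_{i_p}),R_{\E_{i+1}}(E_{i_q}))\cong\Hom_{\TT}^{\bullet}(E_{i_p},E_{i_q})$, and \emph{this} is where the levelled hypothesis is genuinely used: complete orthogonality of $\E_i$ is transported to the mutated level, making $R_i\E$ levelled. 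With these corrections your proof is sound; as written, its central displayed claim is not.
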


Two exceptional collections associated to a levelled exceptional collection $\E$ are of particular importance. If $\E_i$ is a level and $r\in\N$, define the \emph{iterated right mutation} 
\[
	R^{r}(\E_i):= R_{\E_{i+r}}\cdots R_{\E_{i+1}}(\E_i)\in R_{i+r-1}\cdots R_{i}(\E).
\]
Similarly one can define the \emph{iterated left mutation}:
\[
	L^r(\E_i): L_{\E_{i-r}}\cdots L_{\E_{i-1}}(\E_i)\in L_{i-r+1}\cdots L_{i}(\E).
\]

\begin{definition}
Let $\E = (\E_0, \ldots, \E_n)$ be a levelled exceptional collection. The \emph{levelled right dual} collection is defined as
\[
	\E^{\vee}:=(\E_n, R^1\E_{n-1}, \ldots, R^n\E_0).
\]
The \emph{levelled left dual} collection is defined as 
\[
	^{\vee}\E:=(L^n\E_n, L^{n-1}\E_{n-1}, \ldots, \E_0).
\]
\end{definition}

Unfortunately, mutations do not preserve in general the strong property of exceptional collections. In other words, as opposed to silting objects, mutations of tilting objects are often not tilting objects.   However, if the endomorphism algebra is levelled and Koszul, then the dual collections remain strong.

\begin{definition}
A quiver $Q$ is \emph{ordered} if $Q_0 = \{0,\ldots, m\}$ is an ordered set and for all $i\leq j$, $e_iQ_1e_j = \varnothing$. Moreover, we say that $Q$ is \emph{$n$-levelled} if it is ordered and there exists a surjective monotonic map 
\[
	s: Q_0\to \{1,\ldots, n\}
\]
having the property that if $e_jQ_1e_i\not = \varnothing$, then $s(j) = s(i) +1$. Finally, an algebra is \emph{levelled} if it is Morita equivalent to a quiver algebra with a levelled quiver. 
\end{definition}

If $A$ is an ordered algebra and $\{e_0,\ldots, e_m\}$ is a complete set of primitive orthogonal idempotents in the given ordering, then $\bd(A)$ admits a full strong exceptional collection
\[
	\E = (P_0, \ldots, P_m ),
\]
where $P_i = e_iA$. It is levelled if $A$ is a levelled algebra. Note that in this case $A \cong \End(\E)$.\\

We have the following necessary and sufficient conditions for a levelled algebra to be Koszul. Let $S_i = \topm e_iA$. 

\begin{lemma}[{\cite[Lemma 3.1]{Hil95}}]
\label{lem:kos}
A levelled algebra $A$ is Koszul if and only if $\Ext^{\ell}_A(S_j, S_i))\not = 0$ only for $\ell = s(j)-s(i)$. 
\end{lemma}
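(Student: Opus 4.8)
The plan is to reduce the statement to two ingredients: the standard diagonal criterion for Koszulity, and the observation that in a levelled algebra the path-length grading is completely controlled by the level function $s$. I will first isolate the structural backbone. Writing $A \cong kQ/I$ with $Q$ an $n$-levelled quiver, every arrow raises the value of $s$ by exactly one, so any path from a vertex $i$ to a vertex $j$ has length exactly $s(j) - s(i)$; in particular $e_j A e_i$ is concentrated in path-length degree $s(j) - s(i)$ (and vanishes unless the corresponding level difference is nonnegative), and $A$ is generated in degrees $0$ and $1$. Equivalently, the path-length grading of $A$ coincides with the grading placing $e_j A e_i$ in degree $s(j) - s(i)$; I will call the latter the \emph{level grading}. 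The key consequence I want to extract is that this rigidity propagates to Ext: namely $\Ext^\ell_{\Gr A}(S_j, S_i(r)) = 0$ unless $r = s(j) - s(i)$, \emph{independently of Koszulity}.

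To prove this, I would compute Ext from the minimal graded projective resolution $\cdots \to P^1 \to P^0 \to S_j \to 0$, where $P^0 = e_j A$ and the differentials have matrix entries in $\operatorname{rad} A = A_{\geq 1}$. Since the level grading is an algebra grading, this whole complex is graded, and $\Ext^\ell_{\Gr A}(S_j, S_i(r))$ counts the copies of $e_i A$ occurring in internal degree $r$ in $\topm P^\ell$. Now track degrees along the differentials: each matrix entry is a homogeneous element of $A$, hence has internal degree equal to the difference of $s$-values at the two vertices it connects. An easy induction on $\ell$, starting from the generator $e_j$ in internal degree $0$, then shows that every generator of $P^\ell$ lying over the vertex $i$ occurs in internal degree exactly $s(j) - s(i)$, since the telescoping of the level differences along any chain of generators down to $i$ collapses to $s(j) - s(i)$. (Alternatively, one reads off the same fact from the reduced bar complex, whose length-$\ell$, internal-degree-$r$ piece $e_j \bar A^{\otimes \ell} e_i$ automatically satisfies $r = s(j) - s(i)$ by additivity of the level difference, with the further constraint $r \geq \ell$ coming from each tensor factor having degree $\geq 1$.)

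Finally, I would invoke the diagonal characterisation of Koszulity. By definition (equivalently \cite{BGS96}), $A$ is Koszul precisely when each $P^\ell$ in the minimal graded resolution of $S$ is generated in internal degree $\ell$, i.e. $\Ext^\ell_{\Gr A}(S_j, S_i(r)) = 0$ whenever $r \neq \ell$. Combining this with the previous paragraph, the only internal degree available to $\Ext^\ell(S_j, S_i)$ is the forced value $r = s(j) - s(i)$; hence the diagonal condition $r = \ell$ holds for every nonzero Ext group if and only if $s(j) - s(i) = \ell$ whenever $\Ext^\ell_A(S_j, S_i) \neq 0$. Since $A$ is finite-dimensional, $\Ext^\ell_A(S_j, S_i) = \Ext^\ell_{\Gr A}\bigl(S_j, S_i(s(j)-s(i))\bigr)$, so this is exactly the asserted equivalence.

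The hard part is the middle step: making precise that the level grading pins the internal degree of every generator over vertex $i$ in the minimal resolution of $S_j$ to $s(j) - s(i)$. Once the coincidence of the path-length and level gradings is established, this is a bookkeeping induction, and the Koszul criterion then does the rest essentially for free; the whole point is that for a levelled algebra the internal degree of $\Ext^\ell(S_j,S_i)$ is never a free parameter, so non-Koszulity can only manifest as a nonzero $\Ext^\ell$ with $\ell \neq s(j)-s(i)$.
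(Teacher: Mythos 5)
There is nothing in the paper to compare your argument against: the paper states this lemma as a quotation of \cite[Lemma 3.1]{Hil95} and gives no proof of it (its only role is the remark that the second part of Proposition \ref{prop:kos_dual} follows from it). Judged on its own merits, your proof is correct and complete, and it isolates the right mechanism. The crux is exactly what you call the rigidity of the level grading: in a levelled quiver every path from $i$ to $j$ has length $s(j)-s(i)$, so every uniform element of $kQ$ is automatically homogeneous for path length; in particular the relation ideal is homogeneous (this is the one step you use silently and should state explicitly, since it is what makes ``the path-length grading of $A$'' well defined), $e_jAe_i$ is concentrated in degree $s(j)-s(i)$, and consequently in the minimal graded resolution $P^{\bullet}$ of $S_j$ the \emph{entire} space $P^{\ell}e_i$ --- not merely its generating set --- sits in internal degree $s(j)-s(i)$. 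Minimality then identifies $\Ext^{\ell}_{\Gr A}(S_j,S_i(r))$ with the degree-$r$, vertex-$i$ part of $\topm P^{\ell}$, so the internal degree of a nonzero Ext group is never a free parameter, and the diagonal criterion for Koszulity (each $P^{\ell}$ generated in internal degree $\ell$) becomes precisely the condition $\ell = s(j)-s(i)$, in both directions. Two further points you invoke are indeed available but deserve a word: finite-dimensionality of $A$ (used to write $\Ext^{\ell}_A(S_j,S_i)$ as the sum of its graded pieces) follows because a levelled quiver has no oriented cycles and all paths have length at most $n$; and since the paper defines levelled algebras only up to Morita equivalence, one should note that both Koszulity and the Ext condition are Morita invariant, so passing to the basic quiver algebra $kQ/I$, as you do, is harmless.
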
 

\begin{proposition}[{\cite[Proposition 4.5]{Hil95}}]
\label{prop:kos_dual}
Let $A$ be a levelled algebra with level function $s:\{0,\ldots, m\}\to\{0, \ldots, n\}$. Let $\mathbb P = (P_0, \ldots, P_m)$ be the full strong exceptional collection in $\bd(A)$ consisting of indecomposable projective $A$-modules. Then, 
\[
	^{\vee}\mathbb P = (S_m[-s(m)], S_{m-1}[-s(m-1)], \ldots, S_0),
\]
where $S_i:= \topm P_i$. Moreover, $A$ is Koszul if and only if $^{\vee}\mathbb P$ is a full strong exceptional collection. In this case, $\End(^{\vee}\mathbb P)\cong A^!$.
\end{proposition}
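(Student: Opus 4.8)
The plan is to separate the statement into its unconditional part---the identification of ${}^{\vee}\mathbb P$ with the sequence of shifted simples---and the conditional part, where Koszulity is shown to be equivalent to strongness and the endomorphism algebra is identified with $A^!$. I would establish the first part through biorthogonality and the uniqueness of dual collections, and the second part by a direct computation of the graded Hom-spaces between the shifted simples, feeding it into Lemma~\ref{lem:kos}.

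First I would set $T_i := S_i[-s(i)]$ and record the biorthogonality
\[
	\Hom_{\bd(A)}^{\bullet}(P_i, T_j) = \delta_{ij}\,k,
\]
which holds with no hypothesis on $A$: as $P_i = e_iA$ is projective, $\Hom_{\bd(A)}^{\bullet}(P_i, S_j) = \Hom_A(e_iA, S_j)$ is the one-dimensional $e_i$-isotypic part of $S_j$, concentrated in cohomological degree $0$ when $i=j$ and zero otherwise, and the shift $[-s(j)]$ only relocates this space to degree $s(j)$. I would then observe that the levelled left dual agrees with the ordinary dual collection: the objects inside a single level $\mathbb P_\ell$ are mutually $\Hom^{\bullet}$-orthogonal, so the grouped mutation $L_{\mathbb P_\ell}$ equals the composite of the individual mutations through its members and does not depend on their internal order. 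Consequently ${}^{\vee}\mathbb P$ is the unique collection biorthogonal to $\mathbb P$, and by uniqueness of dual collections one concludes ${}^{\vee}\mathbb P = (T_m, \ldots, T_0) = (S_m[-s(m)], \ldots, S_0)$. An alternative is to compute the iterated mutations directly by induction on the level, where mutating through the adjacent lower level realises the evaluation map as the inclusion of the top radical layer and the induction strips off one layer per level.

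For the remaining assertions I would compute
\[
	\Hom_{\bd(A)}^{\bullet}(T_i, T_j) = \bigoplus_{p\in\Z}\Ext_A^{\,s(i)-s(j)+p}(S_i, S_j),
\]
so that the internal-degree-$p$ part is $\Ext_A^{\,s(i)-s(j)+p}(S_i,S_j)$. The collection ${}^{\vee}\mathbb P$ is always full---by the preservation of fullness under levelled mutations recorded above---and always exceptional, since the directedness of the ordered quiver forces $\Ext_A^{\ell}(S_i,S_j)=0$ whenever $j>i$ and forces $\Ext_A^{\ell}(S_i,S_i)=0$ for $\ell>0$, giving both the semiorthogonality and the exceptionality of each $T_i$. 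It is strong exactly when the displayed space is concentrated in internal degree $p=0$ for all $i,j$, i.e. when $\Ext_A^{\ell}(S_i,S_j)\neq 0$ only for $\ell = s(i)-s(j)$; by Lemma~\ref{lem:kos} this is precisely Koszulity of $A$. In that case $\Hom^{\bullet}=\Hom$, and the underlying algebra of $\End({}^{\vee}\mathbb P)$ is
\[
	\bigoplus_{i,j}\Hom_{\bd(A)}(T_i, T_j) = \bigoplus_{i,j}\Ext_A^{\,s(i)-s(j)}(S_i, S_j) = \bigoplus_{\ell\geq 0}\Ext^{\ell}_A(A_0, A_0),
\]
with composition of morphisms in $\bd(A)$ corresponding to the Yoneda product; graded by level difference---equivalently, by the homological Ext-degree, which the shifts $[-s(i)]$ make coincide with the internal grading---this is exactly the Koszul dual $A^!$.

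The step I expect to be the main obstacle is the unconditional first part. The levelled left dual is defined through iterated mutations that produce genuine complexes---especially when $A$ fails to be Koszul---so the content is to show that this abstract construction collapses onto the concrete shifted simples; the biorthogonality computation itself is elementary, but one must justify carefully that the levelled grouping of mutations reproduces the true dual collection and that uniqueness of dual collections applies in this Krull--Schmidt setting. A secondary delicate point is verifying that $\End({}^{\vee}\mathbb P)\cong A^!$ is an isomorphism of graded algebras rather than merely of graded vector spaces, i.e. that composition matches the Yoneda product and that the level grading is carried onto the internal grading of $A^!$.
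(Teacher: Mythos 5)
Your handling of the conditional half of the statement is correct and coincides with what the paper itself does: the paper imports the proposition from \cite{Hil95} and adds only the remark that the second part is a direct consequence of Lemma~\ref{lem:kos}. Your computation $\Hom_{\bd(A)}(S_i[-s(i)],S_j[-s(j)][p])\cong \Ext^{s(i)-s(j)+p}_A(S_i,S_j)$, the reduction of strongness to the Ext-concentration condition of that lemma, and the identification of the resulting endomorphism algebra with the Yoneda algebra of $A_0=\bigoplus_i S_i$, i.e.\ with $A^!$, are exactly the intended argument.

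The gap is in the unconditional identification ${}^{\vee}\mathbb P=(S_m[-s(m)],\ldots,S_0)$, at the very step you flagged. With the paper's conventions, $L_E(X)$ is the \emph{cocone} of the evaluation map, so if $\Hom^{\bullet}_{\TT}(E,X)=0$ then the triangle $L_E(X)\to 0\to X\to L_E(X)[1]$ forces $L_E(X)\cong X[-1]$: mutation through an orthogonal object is a shift, not the identity. Consequently your claim that the levelled left dual agrees with the ordinary (Bondal) dual collection is false under these conventions, because the ordinary dual also mutates $P_j$ through its same-level predecessors, each contributing an extra $[-1]$; already for a semisimple algebra with two vertices the levelled dual is $(S_1,S_0)$ while the ordinary dual is $(S_1[-1],S_0)$. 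So ``uniqueness of dual collections'' for the ordinary dual cannot be invoked. Your uniqueness mechanism itself is sound --- since $\Hom^{\bullet}(P_i,-)$ computes $H^{\ast}(-)e_i$, any object pairing with the $P_i$ as $\delta_{ij}k$ concentrated in degree $s(j)$ must be $S_j[-s(j)]$ --- but to use it you must first prove that the levelled dual $L_{\mathbb P_0}\cdots L_{\mathbb P_{s(j)-1}}(P_j)$ has exactly these pairing degrees. That bookkeeping (an induction on levels, applying $\Hom^{\bullet}(P_i,-)$ to the mutation triangles and using that $\Hom(P_i,P_t)=e_tAe_i$ vanishes unless there are paths from $i$ to $t$) is the actual substance of Hille's proof and is absent from yours. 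Your fallback induction is also incorrect as stated: the evaluation map from the adjacent lower level surjects onto $\operatorname{rad}P_j$ but need not be injective when $A$ is not Koszul --- e.g.\ for a linear quiver on vertices $0\to 1\to 2\to 3$ with the single cubic relation, mutating $P_3$ through level $2$ produces a two-term complex with cohomology in two degrees --- so no radical layer is ``stripped off'' at intermediate stages; the object only collapses to $S_j[-s(j)]$ after all $s(j)$ levelled mutations have been performed.
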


The second part of the proposition is a direct consequence of Lemma \ref{lem:kos}. Now suppose that $\TT$ is an algebraic Krull--Schmidt triangulated category. Let $\E$ be a full strong exceptional collection in $\TT$. Using the equivalence of Theorem \ref{thm:keller},
\[
	\TT\xrightarrow{\sim} \bd(\End(\E)),
\]
we obtain the following corollary. 
\begin{corollary}
\label{cor:kos_dual}
If $\E$ is a full strong exceptional collection in an algebraic Krull--Schmidt triangulated category and $A:=\End(\E)$ is an $n$-levelled algebra, then the left dual collection $^{\vee}\E$ is also a strong full exceptional collection and 
\[
	\End(^{\vee}\E)\cong A^!.
\]  
\end{corollary}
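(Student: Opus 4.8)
The plan is to deduce Corollary~\ref{cor:kos_dual} directly from Proposition~\ref{prop:kos_dual} by transporting the statement along the triangle equivalence supplied by Theorem~\ref{thm:keller}. Since $\E$ is a full strong exceptional collection, the object $U := \bigoplus_{i} E_i$ is tilting in $\TT$, and by definition $A = \End(\E) = \End_{\TT}(U)$. This algebra is $n$-levelled and hence, being a levelled algebra, of finite global dimension; therefore the hypotheses of Theorem~\ref{thm:keller} are met and we obtain a triangle equivalence $F := \Hom_{\TT}^{\bullet}(U,-)\colon \TT \xrightarrow{\sim} \bd(A)$.

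First I would verify that $F$ carries the exceptional collection $\E$ to the standard projective collection $\mathbb P = (P_0,\ldots,P_m)$ with $P_i = e_i A$. The point is that the summand $E_i$ of the tilting object $U$ corresponds under the Keller equivalence to the indecomposable projective $\Hom_{\TT}^{\bullet}(U,E_i) \cong e_i A$, where $e_i$ is the idempotent of $A = \End_{\TT}(U)$ projecting onto $E_i$; moreover the ordering is preserved because the semiorthogonality relations defining $\E$ translate exactly into the vanishing $e_iQ_1e_j=\varnothing$ for $i\leq j$ that makes $A$ an ordered algebra with $\mathbb P$ its projective exceptional collection. Thus $F(\E) = \mathbb P$ as ordered exceptional collections.

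Next I would invoke that mutations and the formation of levelled dual collections are intrinsic to the triangulated structure: they are built from the defining triangles, the graded Hom-spaces, and the evaluation/coevaluation maps, all of which a triangle equivalence preserves. Consequently $F$ commutes with the left mutation operators $L_{E}$ and with the iterated left mutations $L^r$, and therefore $F({^{\vee}\E}) = {^{\vee}(F\E)} = {^{\vee}\mathbb P}$. Applying Proposition~\ref{prop:kos_dual} in $\bd(A)$, the collection ${^{\vee}\mathbb P}$ is a full strong exceptional collection with $\End({^{\vee}\mathbb P}) \cong A^!$. Pulling this back through the equivalence $F$ shows that ${^{\vee}\E}$ is full and strong and that $\End({^{\vee}\E}) \cong \End({^{\vee}\mathbb P}) \cong A^!$, as desired.

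The main obstacle I anticipate is the compatibility claim $F(\E)=\mathbb P$ together with the commutation of $F$ with levelled mutation, i.e.\ making precise that the equivalence sends each $E_i$ to the correct indecomposable projective $e_iA$ in the correct order and that it identifies the levelled structures on the two sides. Once these naturality statements are in place, the rest is a formal transport along the equivalence. In particular one must check that the level function $s$ attached to $\E$ agrees with the level function of the levelled algebra $A$; this follows because $A \cong \End(\E)$ as graded algebras, so the grading defining the levels of $\E$ is precisely the grading exhibiting $A$ as a levelled quiver algebra, and Proposition~\ref{prop:kos_dual} then applies verbatim on the $\bd(A)$ side.
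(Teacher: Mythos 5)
Your proof is correct and takes essentially the same approach as the paper, which likewise obtains the corollary by transporting Proposition~\ref{prop:kos_dual} along the Keller equivalence $\TT\xrightarrow{\sim}\bd(\End(\E))$ of Theorem~\ref{thm:keller}. You merely spell out the compatibility details that the paper leaves implicit, namely that the equivalence identifies $\E$ with the projective collection $\mathbb P$ (with matching level functions) and commutes with the mutation triangles.
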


We can also obtain a similar statement for the right dual collection as follows. By \cite[Corollary 2.10]{BS10}, if $E^i\in\E_i$, then
\[
	R^{n-s(i)}(E^i)=\Se_{m}^{-1} (L^{s(i)} (E^i)), 
\]
where $\Se$ is the Serre functor on $\TT$, which exists because $\TT$ has a full exceptional collection \cite[Corollary 2.10]{BK89}. Here, the length of $\E$ is $m+1$ and $\Se_m:= \Se[-m]$. This implies that 
\[
	\Hom^{\bullet}_{\TT}(R^{n-s(i)}(E^i), R^{n-s(j)}(E^j)) \cong \Hom^{\bullet}_{\TT}(L^{s(i)}(E^i), L^{s(j)}(E^j)).
\]
Therefore, if $A$ is levelled Koszul, then $\E^{\vee}$ is also strong and $\End(\E^{\vee}) \cong A^!$ as well. 


\subsection{AS-regular algebras and singularity categories}
In this subsection, we give the definition of the objects that we study in this paper. 

\begin{definition}
Let $A=\oplus_{i\geq 0} A_i$ be a noetherian locally finite graded algebra. We say that $A$ is \emph{$d$-AS-regular} (resp. \emph{$d$-AS-Gorenstein}) of Gorenstein parameter $\ell$ if $\gldim A = d$ and $\gldim A_0 <\infty$ (resp. $\injdim_A  A =\injdim_{A^{\op}} A = d$) and 
\[
	\uRHom_{A}(A_0, A)\cong (DA_0)(\ell)[-d]\quad\text{in $\DD(\mathsf{Gr}\, A_0)$ and in $\DD(\mathsf{Gr}\, A_0^{\op})$},
\]
where $\uRHom_{A}(A_0, A):=\oplus_{i\in \Z}\RHom_{\mathsf{Gr}\, A}(A_0, A(i))$. 
\end{definition}
We now proceed to define the categories that we are interested in. 
\begin{definition}
Let $A$ be a positively graded noetherian algebra. 
\begin{enumerate}
\item We define the quotient abelian \emph{category of graded tails}
\[
	\qgr A := \gr A/\mathsf{tors}\, A,
\]
where $\mathsf{tors}\, A$ is the full subcategory consisting of all graded finite-dimensional $A$-modules. Let $\q: \gr A\to\qgr A$ be the natural quotient functor. The morphisms in $\qgr A$ are given by 
\[
	\Hom_{\qgr A}(\q M, \q N):=\lim_{p\to\infty} \Hom_{\gr A}(M_{\geq p}, N).
\]
\item We define the \emph{singularity category} to be the Verdier localisation 
\[
	\sing(A):= \bd(\gr A)/\bd(\grproj A),
\]
where $\bd(\grproj A)$ is the triangulated subcategory consisting of objects that are isomorphic to bounded complexes of projectives. We denote by $\pi: \bd(\gr A)\to \sing(A)$ the localisation functor.

\item Let $A$ be AS-Gorenstein. We define the \emph{graded category of Cohen--Macaulay $A$-modules} as follows:
\[
	\grCM(A):=\{M\in \gr A\,\, | \,\, \Ext_A^i(M, A)=0\text{ for any $i>0$}\}.
\]
The \emph{stable category} $\grsCM(A)$ has the same objects as $\grCM(A)$ and the morphisms are given by 
\[
	\Hom_{\grsCM(A)}(M, N):= \Hom_{\grCM(A)}(M,N)/[A](M,N),
\]
where $[A](M,N)$ consists of the morphisms which factor through $\add A$.
\end{enumerate}
\end{definition}

A famous theorem of Buchweitz \cite[Theorem 4.4.1]{Buc87} and Orlov \cite[Theorem 3.9]{Orl04} states that if $A$ is AS-Gorenstein, then there is a triangle equivalence
\[
	\sing(A)\cong \grsCM(A). 
\] 
Our leading motivation is to determine when these categories admit tilting objects. By  \cite[Propositions 1.3 \& 1.4]{IT13}, they are Krull--Schmidt algebraic triangulated categories, so the existence of tilting objects implies that they are equivalent to the derived category of a finite-dimensional algebra. The first two categories in our definition are related by the following result of Orlov, which was generalised to our setting in \cite{BS15}. 

\begin{theorem}[{\cite[Theorem 2.5]{Orl09}}]
Let $A$ be an AS-Gorenstein algebra of parameter $\ell\geq 1$. Then there exists a fully faithfull functor 
\[
	\Phi: \sing(A)\to \bd(\qgr A)
\]
and a semiorthogonal decomposition 
\[
	\bd(\qgr A) = \langle \q A, \ldots, \q A(\ell-1), \Phi(\sing(A))\rangle. 
\]
\end{theorem}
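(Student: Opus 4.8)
The plan is to exhibit both $\sing(A)$ and $\bd(\qgr A)$ as Verdier quotients of the common ambient category $\TT:=\bd(\gr A)$, and then to compare the two quotient functors using the internal grading together with the AS-Gorenstein duality. By definition $\sing(A)=\TT/\bd(\grproj A)$. For the other side I would first record the (standard, for noetherian $A$) derived localisation
\[
	\bd(\qgr A)\;\cong\;\TT/\bd_{\mathsf{tors}}(\gr A),
\]
where $\bd_{\mathsf{tors}}(\gr A)$ is the thick subcategory of complexes with finite-dimensional total cohomology; the functor is induced by $\q$, and the degree truncations $M\mapsto M_{\ge p}$ become isomorphisms in $\qgr A$ because $M/M_{\ge p}$ is finite-dimensional, supplying the inverse on objects. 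The point of this reformulation is that objects of $\bd(\qgr A)$ depend only on the high-degree behaviour of a graded representative, whereas objects of $\sing(A)$ depend on a representative only up to perfect complexes. These are the two independent directions that the Gorenstein parameter must reconcile.

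Next I would produce the semiorthogonal piece. The twists $\q A(i)$ are the images of the free modules $A(i)$, and I would verify that $(\q A,\q A(1),\dots,\q A(\ell-1))$ generates an admissible subcategory of $\bd(\qgr A)$; admissibility needs the relevant $\Hom^{\bullet}$-spaces to be finite-dimensional and the relevant adjoints to exist, which is where the AS-Gorenstein hypothesis enters through $\uRHom_A(A_0,A)\cong(DA_0)(\ell)[-d]$ and local duality. Granting admissibility, Lemma~\ref{lem:orl_quot} identifies the quotient of $\bd(\qgr A)$ by this subcategory with its right orthogonal $\mathcal N:=\langle \q A,\dots,\q A(\ell-1)\rangle^{\perp}$, so the theorem reduces to constructing a triangle equivalence $\Phi\colon\sing(A)\xrightarrow{\sim}\mathcal N$, which then tautologically produces the stated decomposition.

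To construct $\Phi$ I would use a degree window. Representing an object of $\sing(A)$, via the Buchweitz--Orlov equivalence $\sing(A)\cong\grsCM(A)$, by a graded maximal Cohen--Macaulay module $M$, I would set $\Phi(M):=\q M$ after replacing $M$ by a sufficiently high syzygy so that its generators lie in a fixed range of $\ell$ consecutive degrees. The key computation is the morphism formula
\[
	\Hom_{\qgr A}(\q A(i),\q M[n])\;\cong\;\varinjlim_p\Ext^n_{\gr A}\big(A(i)_{\ge p},M\big),
\]
whose right-hand side is governed, through the Serre--Grothendieck correspondence, by the local cohomology of $M$. Using that $M$ is maximal Cohen--Macaulay and the shift by $\ell$ from the duality formula, these groups vanish for $0\le i\le\ell-1$ and all $n$, so that $\q M\in\mathcal N$ and $\Phi$ is well defined; full faithfulness is the parallel statement that the correction terms between $\Hom_{\sing(A)}(M,M'[n])=\uExt^{n}(M,M')$ and $\Hom_{\mathcal N}(\q M,\q M'[n])$ vanish in the chosen window, again extracted from the same duality.

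The hard part will be precisely this degree bookkeeping: matching the window to the Gorenstein parameter so that the orthogonality holds for exactly the $\ell$ twists $\q A,\dots,\q A(\ell-1)$, and for no fewer or more. Concretely, one must show that $\uRHom_A(A_0,A)\cong(DA_0)(\ell)[-d]$ translates, through local duality and the $\Hom_{\qgr}$-formula above, into the vanishing of exactly the right $\Ext$-groups; this is the single place where $\ell$ is pinned down, and getting the truncation degrees and the admissibility (hence the applicability of Lemma~\ref{lem:orl_quot}) to cohere is the crux. Everything else---the derived localisation, the reduction to an orthogonality statement, and the final assembly into a semiorthogonal decomposition---is formal once this vanishing is established.
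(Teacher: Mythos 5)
First, a caveat: the paper offers no proof of this statement at all --- it is quoted from \cite[Theorem 2.5]{Orl09} (with \cite{BS15} supplying the present generality) --- so your attempt can only be measured against Orlov's argument, whose architecture (realise both categories through a common ambient category of graded complexes, establish admissibility, apply Lemma~\ref{lem:orl_quot}, extract the orthogonality from Gorenstein duality) your outline does reproduce. The genuine gap is in your construction of $\Phi$, which is the heart of the theorem. The assignment $M\mapsto \q M$ on Cohen--Macaulay representatives is not a well-defined functor on $\sing(A)$: projective summands do not die under $\q$, so $M$ and $M\oplus A(j)$, which are isomorphic in $\sing(A)\cong\grsCM(A)$, have non-isomorphic images, and stable morphisms (classes modulo maps factoring through projectives) are not respected either. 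Your proposed repairs cannot fix this. Passing to a syzygy changes the object, since $\Omega M\cong M[-1]$ in $\sing(A)$, and the number of syzygies needed depends on $M$, so no uniform shift restores functoriality; and degree truncation $M\mapsto M_{\geq p}$ changes nothing in $\qgr A$ (you note yourself that truncations become isomorphisms there), so it cannot force the vanishing you need. Indeed $\Hom_{\qgr A}(\q A(i),\q M[n])$ is, via the usual local-cohomology identifications, the saturation of $M$ in the relevant degree when $n=0$ and $H^{n+1}_{\mathfrak{m}}(M)$ in that degree when $n\geq 1$; Cohen--Macaulayness kills only the range $1\leq n\leq d-2$, and the surviving $n=0$ and $n=d-1$ contributions are invariants of the object $\q M$, so they must be handled by choosing the graded representative correctly \emph{before} sheafifying --- exactly the bookkeeping you defer, and the one place where $\ell$ enters. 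Orlov sidesteps all of these issues by never normalising objects: he works inside the bounded derived category of modules concentrated in degrees $\geq i$, shows the two quotient functors (to $\sing(A)$ and to $\bd(\qgr A)$) restrict to equivalences on suitable orthogonal subcategories, verifies the semiorthogonality there on the generators $A_0(-e)$ and $A(-e')$ directly from $\uRHom_A(A_0,A)\cong(DA_0)(\ell)[-d]$, and defines $\Phi$ as a composite of these canonical functors, so well-definedness and functoriality come for free.

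Two further points. Your orthogonal is on the wrong side: in the paper's convention (morphisms from later components of a semiorthogonal decomposition to earlier ones vanish), the stated decomposition --- and its use later in the paper, where $\Phi(\sing(eAe))\cong{}^{\perp}\langle\q eAe,\ldots,\q eAe(\ell-1)\rangle$ --- requires the image of $\Phi$ to be the \emph{left} orthogonal, i.e.\ the vanishing of $\Hom_{\qgr A}(\q M,\q A(i)[n])$; you instead aim at the right orthogonal via $\Hom_{\qgr A}(\q A(i),\q M[n])$, which at best yields the different decomposition $\langle\,\langle\q A,\ldots,\q A(\ell-1)\rangle^{\perp},\q A,\ldots,\q A(\ell-1)\,\rangle$. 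Finally, you reduce the theorem to exhibiting a triangle \emph{equivalence} from $\sing(A)$ onto the orthogonal, but then only discuss well-definedness and full faithfulness; essential surjectivity is what actually produces the semiorthogonal decomposition, and it is nowhere addressed.
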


When $A$ is AS-regular, there is also a semiorthogonal decomposition of $\bd(\qgr A)$, given in \cite{MM11}. 

\begin{definition}
\label{def:beilin}
Let $A$ be an AS-regular algebra of Gorenstein parameter $\ell$. The \emph{Beilinson algebra} is defined by 
\[
	\nabla A:=\begin{pmatrix}
	A_0& 0& 0 & \cdots & 0 & 0\\
	A_1 & A_0 & 0& \cdots & 0 & 0\\
	A_2 & A_1 & A_0& \cdots & 0& 0\\
	\vdots  & \vdots & \vdots& \ddots & \vdots & \vdots\\
	A_{\ell-2}  & A_{\ell-3} & A_{\ell-4}& \cdots & A_0 & 0\\
	A_{\ell-1} & A_{\ell-2} & A_{\ell-3}& \cdots & A_1& A_0\\
	\end{pmatrix}.
\]
\end{definition}

If $e$ is an idempotent in $A$, then we define $\tilde e_i:= \diag(0,\ldots, 0, e, 0, \ldots, 0)$ to be the diagonal matrix in $\nabla A$ with only one non-zero entry in position $(i,i)$ and we set $\tilde e := \sum_{i=0}^{\ell-1} e_i$. 

\begin{theorem}[{\cite[Proposition 4.3, Proposition 4.4, Theoreom 4.12]{MM11}}]
\label{thm:MM_tilting}
Let $A$ be an AS-regular algebra of Gorenstein parameter $\ell$. There is a semiorthogonal decomposition 
\[
	\bd(\qgr A) = \langle \q A,\ldots, \q A(\ell-1)\rangle.
\]
Moreover, $\q U:= \oplus_{i=0}^{\ell-1} \q A(i)$ is a tilting object in $\bd(\qgr A)$ and $\End_{\bd(\qgr A)}(\q U)\cong\nabla A$. This implies that there is a triangle equivalence 
\[
	\RHom_{\bd(\qgr A)}(\q U, -): \bd(\qgr A)\to \bd(\nabla A). 
\]
\end{theorem}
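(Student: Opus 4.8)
The plan is to realise $\q U=\bigoplus_{i=0}^{\ell-1}\q A(i)$ as the direct sum of a strong full exceptional collection $(\q A,\q A(1),\ldots,\q A(\ell-1))$ in $\bd(\qgr A)$, to identify its endomorphism algebra with $\nabla A$, and then to invoke Theorem~\ref{thm:keller}. Everything reduces to understanding the graded cohomology groups
\[
	\bigoplus_{n\in\Z}\Ext^p_{\qgr A}(\q A,\q A(n))=\bigoplus_{n\in\Z}\Ho^p(\q A(n)),
\]
since $\Hom^{\bullet}_{\bd(\qgr A)}(\q A(i),\q A(j))$ depends only on $n=j-i$. Once these are known, the semiorthogonal decomposition, the exceptionality, the strongness, and the shape of $\End(\q U)$ all follow by bookkeeping.

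First I would compute these groups through the comparison between $\qgr$-cohomology and local cohomology with respect to $\mathfrak m=A_{\geq1}$. Because $A$ is noetherian AS-regular it satisfies the $\chi$-condition and has finite cohomological dimension, so there are isomorphisms $\bigoplus_n\Ho^p(\q A(n))\cong\Ho^{p+1}_{\mathfrak m}(A)$ for $p\geq1$, together with a four-term exact sequence controlling $p=0$ via $\Ho^0_{\mathfrak m}(A)$ and $\Ho^1_{\mathfrak m}(A)$. Local duality applied to the condition $\uRHom_A(A_0,A)\cong(DA_0)(\ell)[-d]$ shows that $\Ho^i_{\mathfrak m}(A)$ vanishes for $i\neq d$ and that $\Ho^d_{\mathfrak m}(A)\cong D(A)(\ell)$, where $D$ is the graded dual. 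Hence the only higher cohomology sits in degree $p=d-1$, where $\Ho^{d-1}(\q A(n))\cong D(A_{-n-\ell})$ is nonzero precisely when $n\leq-\ell$, while in degree $p=0$ one gets $\Hom_{\qgr A}(\q A,\q A(n))\cong A_n$.

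These formulas immediately yield the first half of the statement. For $i>j$ the twist $n=j-i$ lies in $-(\ell-1)\leq n\leq-1$, so $A_n=0$ kills the degree-zero part while $n>-\ell$ kills $\Ho^{d-1}$; thus $\Hom^{\bullet}_{\bd(\qgr A)}(\q A(i),\q A(j))=0$ and the collection is exceptional with the claimed semiorthogonality. For all $0\leq i,j\leq\ell-1$ the twist $n=j-i$ lies in $[-(\ell-1),\ell-1]$, still strictly above $-\ell$, so every positive-degree Ext vanishes and the collection is strong; the surviving groups $\Hom_{\qgr A}(\q A(i),\q A(j))\cong A_{j-i}$ for $j\geq i$ assemble, with composition given by the multiplication of $A$, into the Beilinson algebra $\nabla A$, so $\End(\q U)\cong\nabla A$.

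It remains to prove fullness, which I expect to be the main obstacle and is where AS-regularity is used most essentially. The twists $\{\q A(n)\}_{n\in\Z}$ generate $\bd(\qgr A)$, so it suffices to express every $\q A(n)$ in terms of the window $\{\q A(0),\ldots,\q A(\ell-1)\}$. For this I would take the minimal graded free resolution of $A_0$ as a right $A$-module; AS-regularity forces it to have length $d$, a top term of internal shift exactly $\ell$, and—by the self-duality coming from the Gorenstein condition—all intermediate shifts strictly between $0$ and $\ell$. Since $A_0$ is torsion, applying $\q$ makes this resolution an exact complex in $\qgr A$, and twisting it yields, for each $n$, an exact sequence relating $\q A(n-\ell)$ and $\q A(n)$ to twists lying strictly inside $(n-\ell,n)$. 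This lets one slide the length-$\ell$ window up and down, and an induction starting from $\{0,\ldots,\ell-1\}$ reaches every $\q A(n)$, whence $\langle\q A,\ldots,\q A(\ell-1)\rangle=\bd(\qgr A)$. Finally, $\q U$ is then a strong full exceptional collection, hence a tilting object; $\nabla A=\End(\q U)$ has finite global dimension; and $\bd(\qgr A)$ is algebraic and Krull--Schmidt, so Theorem~\ref{thm:keller} delivers the triangle equivalence $\RHom_{\bd(\qgr A)}(\q U,-)\colon\bd(\qgr A)\to\bd(\nabla A)$.
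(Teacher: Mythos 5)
Your strategy---compute $\Hom^{\bullet}_{\bd(\qgr A)}(\q A(i),\q A(j))$ via local cohomology, deduce semiorthogonality and strongness, prove generation by sliding the window along the resolution of $A_0$, then apply Theorem~\ref{thm:keller}---is the classical Beilinson argument, and it does prove the theorem when $A$ is connected graded (more generally, when $A_0$ is semisimple). But the theorem is stated here in the Minamoto--Mori generality, where $A_0$ is an arbitrary finite-dimensional algebra of finite global dimension, and there your key claim about the minimal resolution fails. Minimality only forces the differentials into $P\cdot\operatorname{rad}(A)$, and $\operatorname{rad}(A)=\operatorname{rad}(A_0)\oplus A_{\geq 1}$; when $A_0$ has nonzero radical, generator degrees need not increase strictly with homological degree. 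What one can actually prove is: $P_0=A$, the last term $P_d$ is generated purely in degree $\ell$, and the intermediate $P_j$ are generated in degrees lying in $[1,\ell]$---the upper bound is \emph{not} strict. A concrete failure is Gorenstein parameter $1$ (e.g.\ the preprojective algebra of a quasi-Fano algebra $A_0$, as in Theorem~\ref{thm:AIR}): the first syzygy of $A_0$ is $A_{\geq 1}$, so $P_1$ is generated in degree $1=\ell$, the same degree as $P_d$. Consequently your twisted complexes $\q P_{\bullet}(n)$ only slide the window \emph{up} (they do show $\q A(n)\in\langle \q A(n-1),\ldots,\q A(n-\ell)\rangle$), never \emph{down}: to express $\q A(n-\ell)$ through higher twists you would have to isolate $\q P_d(n)$ inside the complex, which is blocked when intermediate terms share the extreme degree $\ell$, and additionally $\q A(n-\ell)$ is only a summand of $\q P_d(n)$ up to an $A_0$-projective multiplicity space that need not be an $A_0$-generator when $A_0$ is not self-dual. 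Since generating $\bd(\qgr A)$ requires the twists $\q A(n)$ for $n\to-\infty$, fullness---which is also what makes the semiorthogonal decomposition a decomposition of all of $\bd(\qgr A)$---is not established.

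This is not a removable technicality, because the non-semisimple case is exactly the one this paper needs: if $A_0$ is semisimple, generator degrees increase strictly, forcing $\ell\geq d$, so every example in the paper with $\ell<d$ (Example~\ref{ex:silting} has $d=3$, $\ell=2$; the final example has $d=4$, $\ell=2$), as well as the whole parameter-$1$ setting, lies outside the reach of your argument. The generation statement is precisely the hard content of the cited theorem, and \cite{MM11} prove it by a genuinely different route: they reduce to Gorenstein parameter $1$ via the $\ell$-th quasi-Veronese algebra $A^{[\ell]}$ (whose degree-zero part is $\nabla A$) and then invoke Minamoto's ampleness theorem for quasi-Fano algebras \cite{Min12}, rather than Beilinson window-sliding. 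Two smaller points in the same direction: the local duality statement $\Ho^i_{\mathfrak m}(A)=0$ for $i\neq d$ and $\Ho^d_{\mathfrak m}(A)\cong D(A)(\ell)$ is standard for connected graded noetherian AS-Gorenstein algebras but must be quoted or reproved in this generality; and since $\End(\q A)\cong A_0\not\cong k$, the $\q A(i)$ are not exceptional objects in the paper's sense---the pieces of the decomposition are copies of $\bd(A_0)$---though this part of your bookkeeping, together with $\End_{\bd(\qgr A)}(\q U)\cong\nabla A$, is essentially correct.
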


In the remainder of the paper, we shall assume the following setting. 

\begin{setting}
\label{setting}
Let $A = \oplus_{i\geq 0} A_i$ be a locally finite noetherian $d$-AS-regular algebra of Gorenstein parameter $\ell$, with $\ell\geq 1$. Let $e = e^2\in A$ be such that
\begin{enumerate}[a)]
\item \label{enu:idem_fd} $A/AeA$ is finite-dimensional; 
\item \label{enu:idem_gor} $eAe$ is $d$-AS-Gorenstein of parameter $\ell$. 
\end{enumerate}
\end{setting}

Condition \ref{enu:idem_fd} implies that the functor 
\begin{equation}
\begin{split}
\label{ali:equi_qgr}
	\Psi:\gr A&\to \gr eAe\\
	M &\mapsto Me
\end{split}
\end{equation}
induces an equivalence $\Psi: \qgr A\cong \qgr eAe$ \cite[Proof of Corollary 3.3]{Ami13}. Condition \ref{enu:idem_gor} allows us to invoke Orlov's semiorthogonal decomposition to study $\sing(eAe)$. Note that it follows automatically from \ref{enu:idem_fd} when $A$ is bimodule Calabi--Yau of Gorenstein parameter $\ell$ \cite[Proof of Theorem 4.3]{Ami13}. For more information on bimodule Calabi--Yau algebras, we refer to \cite{AIR15}. An important fact is that they are a special class of AS-regular algebras \cite[Theorem 3.5]{MU16b}. \\

Before stating our main running example, we define skew-group algebras. If $R$ is an algebra and $G<\text{Aut}\, R$ is a subgroup, then the \emph{skew-group} algebra, denoted by $R\#G$, is defined as a vector space by $R\#G = R\otimesk kG$ with multiplication 
\[
	(a\otimes g)(b\otimes h) = ag(b)\otimes gh.
\]
We define $G\#R$ in a similar way. 

\begin{example}
Let $S = k[x_1, \ldots, x_d]$, $G<SL(n,k)$ be finite and $e = \frac{1}{G}\sum_{g\in G} g$. Let $A=S\#G$ be the skew-group algebra. Then there exists many gradings endowing it with a structure of AS-regular algebra, for example by putting the variables in degree $1$. The reader can find other examples of gradings in the next sections. In this case, $eAe\cong S^G$, the invariant ring, and it is well-known that, since $G<SL(d,k)$, the ring $S^G$ is AS-Gorenstein. If, in addition, $S^G$ is an isolated singularity, then $A$ and $e$ satisfy condition \ref{enu:idem_fd} in Setting \ref{setting}.  
\end{example}

In this paper, our examples will be skew-group algebras as described above. We therefore explain here how to construct the relevant quivers. 

\begin{definition}
\label{def:mckay}
Let $S = k[x_1,\ldots, x_d]$ be the polynomial ring and ${G= \frac{1}{r}(a_1, \ldots, a_d)<SL(d,k)}$ be the cyclic group generated by the diagonal matrix $\diag(\xi^{a_1}, \ldots, \xi^{a_d})$, where $\xi$ is an $r$th root of unity and $0\leq a_j<r$. Let $A:=S\#G$ be the skew-group algebra. Then $A\cong kQ/(R)$, where $Q$ is the \emph{McKay quiver}, whose vertices are given by $\Z/r\Z$. Furthermore, there are arrows 
\[
	x_j: i\to i+a_j
\]
for each $i\in \Z/r\Z$ and $1\leq j\leq d$. The relations are generated by $x_ix_j-x_jx_i = 0$. \\

If $A$ is endowed with a grading giving it the structure of an AS-regular algebra of Gorenstein parameter $\ell$, then we can deduce from \cite[Proposition 7.13]{IT13} how to compute the quiver of the finite-dimensional algebra $\nabla A$. Indeed, it is described by the \emph{$\ell$-folded McKay quiver}, whose vertices are given by $\Z/r\Z\times \{0,\ldots,\ell-1\}$. If $x_j$ is an arrow in degree $\delta$ in the quiver of $A$, then we have arrows 
\[
	x_j: (i,p)\to (i + a_j, p+\delta)
\]
for each $0\leq p\leq \ell-1$ such that $p + \delta\leq \ell-1$, $i\in \Z/r\Z$ and $1\leq j\leq d$. We denote $(i, p)$ by $i^p$. The idempotent $\tilde e\in \nabla A$ induced from $e$ is then given by $e^0+\ldots + e^{\ell-1}$. 
\end{definition}

As mentioned in the introduction, two partial answers were given to our motivating question. 

\begin{theorem}[{\cite[Theorem 4.1]{AIR15}}]
\label{thm:AIR}
Let $A$ be a noetherian bimodule Calabi--Yau algebra of Gorenstein parameter $1$. Let $e$ be an idempotent such that 
\begin{enumerate}
\item $A/AeA$ is finite-dimensional;
\item \label{cond:AIR} $eA_0(1-e) = 0$. 
\end{enumerate}
Then $\pi Ae$ is a tilting object in $\sing(eAe)$ and there is a triangle equivalence
\[
	\sing(eAe)\cong\bd((1- e)A_0(1- e)).  
\]	
\end{theorem}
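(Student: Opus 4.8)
The plan is to follow the strategy indicated in the introduction: realise $\sing(eAe)$ as an orthogonal complement inside $\bd(\qgr eAe)$ via Orlov's embedding, transport this picture to $\bd(A_0)$ using the Minamoto--Mori tilting object together with the equivalence $\Psi$, and then exhibit the tilting object by hand.

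First I would check that the hypotheses place us in Setting \ref{setting} with $\ell = 1$: a bimodule Calabi--Yau algebra is AS-regular by \cite[Theorem 3.5]{MU16b}, hypothesis (1) is condition \ref{enu:idem_fd}, and condition \ref{enu:idem_gor} (that $eAe$ is AS-Gorenstein of parameter $1$) holds automatically in the bimodule Calabi--Yau case \cite{Ami13}. Hence the equivalence $\Psi\colon \qgr A \xrightarrow{\sim} \qgr eAe$ is available, and since $\ell = 1$ the Beilinson algebra is $\nabla A = A_0$, so Theorem \ref{thm:MM_tilting} provides a tilting object $\q A$ in $\bd(\qgr A)$ with $\End(\q A)\cong A_0$ and a triangle equivalence $F\colon \bd(\qgr A)\xrightarrow{\sim}\bd(A_0)$. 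Composing, $G := F\circ\Psi^{-1}\colon \bd(\qgr eAe)\xrightarrow{\sim}\bd(A_0)$ sends $\q(Ae)=\Psi(\q A)$ to the regular module $A_0$. The idempotent splitting $A = eA\oplus(1-e)A$ gives $\q(Ae)=\q(eAe)\oplus\q((1-e)Ae)$, and under $G$ the two summands correspond to the projective $A_0$-modules $eA_0$ and $(1-e)A_0$. On the other side, since $eAe$ is AS-Gorenstein of parameter $1$, Orlov's theorem \cite[Theorem 2.5]{Orl09} applied to $eAe$ yields $\bd(\qgr eAe)=\langle \q eAe,\Phi(\sing(eAe))\rangle$, so $\Phi$ identifies $\sing(eAe)$ with ${}^{\perp}\langle\q eAe\rangle$; transporting through $G$ identifies $\sing(eAe)$ with ${}^{\perp}\thick(eA_0)\subseteq\bd(A_0)$.

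The key step is to extract the tilting object from condition \ref{cond:AIR}. Using the standard identification $\Hom_{A_0}((1-e)A_0, eA_0)\cong eA_0(1-e)$, hypothesis (2) gives $\Hom_{A_0}((1-e)A_0,eA_0)=0$, while $\Ext^i_{A_0}((1-e)A_0,eA_0)=0$ for $i>0$ because $(1-e)A_0$ is projective; hence $(1-e)A_0\in{}^{\perp}\thick(eA_0)$. Since $\thick(eA_0\oplus(1-e)A_0)=\bd(A_0)$, projecting generators onto the orthogonal component of the decomposition shows ${}^{\perp}\thick(eA_0)=\thick((1-e)A_0)$, so $(1-e)A_0$ is a tilting object of ${}^{\perp}\thick(eA_0)\cong\sing(eAe)$ with $\End((1-e)A_0)\cong(1-e)A_0(1-e)$. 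To recognise this object as $\pi Ae$, I would note that $Ae = eAe\oplus(1-e)Ae$ as right $eAe$-modules, whence $\pi(eAe)=0$ and $\pi Ae=\pi((1-e)Ae)$; by the compatibility of Orlov's embedding with the quotient functor (it agrees with $\q$ on Cohen--Macaulay modules in the appropriate degree window), $\Phi(\pi Ae)\cong\q((1-e)Ae)$, which $G$ carries to $(1-e)A_0$. Finally, condition \ref{cond:AIR} makes $A_0$ a triangular matrix algebra with corner $(1-e)A_0(1-e)$, so $\gldim (1-e)A_0(1-e)<\infty$ because $\gldim A_0<\infty$; Theorem \ref{thm:keller} then upgrades the tilting object to a triangle equivalence $\sing(eAe)\cong\bd((1-e)A_0(1-e))$.

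The main obstacle is the identification $\Phi(\pi Ae)\cong(1-e)A_0$. Everything else is formal once the three equivalences are lined up, but pinning down Orlov's functor on the specific module $Ae$ requires unwinding its construction from \cite{Orl09, BS15} and verifying that $(1-e)Ae$ is a Cohen--Macaulay $eAe$-module represented in the correct degrees, so that $\Phi\circ\pi$ and $\q$ genuinely coincide on it. A secondary point to handle with care is the finite global dimension of the corner $(1-e)A_0(1-e)$, which rests on the triangular structure of $A_0$ forced by $eA_0(1-e)=0$.
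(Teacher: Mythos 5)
Your proposal is correct and follows essentially the same route as the paper's internal treatment of this statement (the proof of Theorem \ref{thm:silting}(a) together with Lemma 3.2 and Corollary 3.3): both arguments rest on the Minamoto--Mori tilting object of Theorem \ref{thm:MM_tilting}, the equivalence $\Psi\colon\qgr A\cong\qgr eAe$, the observation that condition (\ref{cond:AIR}) forces $\Hom(\q e'Ae,\q eAe)\cong eA_0e'=0$ and hence a semiorthogonal decomposition $\langle \q eAe,\q e'Ae\rangle$, the comparison with Orlov's decomposition via Lemma \ref{lem:orl_quot}, and finally Theorem \ref{thm:keller} plus finite global dimension of the corner algebra. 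The only real difference is cosmetic: you transport everything across $F\circ\Psi^{-1}$ and work inside $\bd(A_0)$, whereas the paper stays in $\bd(\qgr eAe)$ and reads off $\End(\q e'Ue)\cong(1-\tilde e)(\nabla A)(1-\tilde e)$ there; your triangular-matrix argument for $\gldim(1-e)A_0(1-e)<\infty$ is likewise a harmless variant of the paper's ``ordered algebra'' argument. Concerning the one step you flag as the main obstacle --- the identification $\Phi(\pi Ae)\cong\q e'Ae$ needed to recognise the tilting object as $\pi Ae$ --- you should be aware that the paper's own proof never carries this out either: its Theorem \ref{thm:silting} is stated for the object $\q e'Ue$ in $\sbd(\qgr eAe)$, and its corollary asserts only the triangle equivalence $\sing(eAe)\cong\bd((1-\tilde e)(\nabla A)(1-\tilde e))$; the explicit claim about $\pi Ae$ is inherited from the cited reference. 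So your honest flagging of that point marks exactly the gap between what this method yields without further work and the literal statement, and everything you do prove is sound.
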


Note that since the Gorenstein parameter is $1$, we have that $A_0= \nabla A$ and $e = \tilde e$. 

\begin{theorem}[{\cite[Theorem 1.7]{IT13};\cite[Theorem 4.17]{MU16}}]
\label{thm:MU}
Let $S$ be a noetherian AS-regular Koszul algebra over $k$ of dimension $d\geq 2$. Let $G \leq \GrAut S$ be a finite subgroup such that $\chare  k$ does not divide $|G|$ and let $e = \frac{1}{|G|} \sum_{g\in G} g$. Assume in addition that 
\begin{enumerate}[a)]
\item $S\#G/( e)$ is finite-dimensional over $k$;
\item $S^G \cong eS\#Ge$ is AS-Gorenstein.
\end{enumerate}
Then there is a triangle equivalence 
\[
	\sing(S^G)\cong \bd((1-\tilde e)(G\# \nabla (S^!))(1-\tilde e)).
\]
\end{theorem}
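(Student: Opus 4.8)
The plan is to realise this statement as an instance of Theorem B: I would verify the three hypotheses of Theorem B for $A := S\# G$ and then identify the output algebra $(1-\tilde e)(\nabla A)^!(1-\tilde e)$ with $(1-\tilde e)(G\#\nabla(S^!))(1-\tilde e)$. The first task is to see that $A$ fits Setting \ref{setting}. Since $S$ is Koszul of global dimension $d$ its Gorenstein parameter is $d$, and in the non-modular case $\chare k\nmid|G|$ it is standard that the skew-group algebra of an AS-regular algebra by a finite group of graded automorphisms is again AS-regular of the same dimension and Gorenstein parameter; thus $A$ is $d$-AS-regular of Gorenstein parameter $d$, with $A_0 = S_0\# G = kG$ semisimple by Maschke's theorem. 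Condition (1) of Theorem B, that $A/AeA$ be finite-dimensional, is hypothesis (a) because $AeA = (e)$; condition (2), that $eAe = S^G$ be AS-Gorenstein, is hypothesis (b); and condition (3), $eA_0e\cong k$, holds because $e=\frac{1}{|G|}\sum_{g\in G}g$ is the idempotent of $kG$ projecting onto the trivial representation, so that $e(kG)e\cong k$ as $kGe$ is the one-dimensional trivial $kG$-module.

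The one nontrivial hypothesis to check is that $\nabla A$ is a levelled Koszul algebra, and I would argue this in two stages. First, the Beilinson algebra $\nabla S$ of the Koszul AS-regular algebra $S$ is levelled, with level function assigning to the $i$-th diagonal block the value $i$ (this is the ordering built into Definition \ref{def:beilin}), and it is Koszul: by Lemma \ref{lem:kos} one only has to check that $\Ext^\ell$ between the simple $\nabla S$-modules $S_j, S_i$ is nonzero precisely when $\ell = s(j)-s(i)$, and this concentration of Ext-degrees is exactly a reformulation of the Koszulity of $S$ transported through the Beilinson construction. Second, passing to $A = S\#G$, the Beilinson algebra of $S\#G$ is the $G$-thickening of $\nabla S$ recorded by the folded McKay-type quiver of Definition \ref{def:mckay}; since $kG$ is semisimple, smashing with $G$ preserves both the level function and, again via Lemma \ref{lem:kos}, Koszulity. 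Hence $\nabla A$ is levelled Koszul and Theorem B applies, giving a triangle equivalence
\[
\sing(eAe)\cong\bd\big((1-\tilde e)(\nabla A)^!(1-\tilde e)\big).
\]

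It then remains to identify $(\nabla A)^!$ with $G\#\nabla(S^!)$ compatibly with $\tilde e$. Recall that inside Theorem B the algebra $(\nabla A)^!$ arises, through Proposition \ref{prop:kos_dual} and Corollary \ref{cor:kos_dual}, as the endomorphism algebra of the left dual collection ${}^{\vee}\mathbb P$ of the projective exceptional collection of $\nabla A$; Koszulity is precisely what keeps this dual collection strong. To evaluate it I would use two commutations. Koszul duality commutes with smashing by the semisimple $kG$, so that duals of modules over $S\#G$ are computed over $S^!\#G$; and the Beilinson construction intertwines with Koszul duality in the form $(\nabla S)^!\cong\nabla(S^!)$, both sides being the $d$-term triangular matrix algebras assembled from the graded pieces of $S$ and of its dual $S^!$. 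Combining these, and tracking the $G$-action, yields $(\nabla A)^!\cong G\#\nabla(S^!)$, the switch from $(-)\#G$ to $G\#(-)$ and from $S$ to $S^!$ reflecting the arrow-reversal inherent in passing to the left dual collection in Proposition \ref{prop:kos_dual}. Finally, $\tilde e = e^0+\dots+e^{d-1}$ is the sum of the trivial-representation idempotents at each level, and one checks it is carried to the corresponding idempotent on the right-hand side, so the corner algebras agree.

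I expect the main obstacle to be this last identification. The reduction to Theorem B and the verification of its hypotheses are essentially formal, and the levelled-Koszul input of Lemma \ref{lem:kos} guarantees on abstract grounds that $\End({}^{\vee}\E)\cong(\nabla A)^!$ is a finite-dimensional algebra of finite global dimension. What requires genuine computation, rather than a formal appeal to the earlier results, is pinning down this algebra explicitly as $G\#\nabla(S^!)$ with the correct handedness and matching of idempotents; the interaction of the Beilinson functor, Koszul duality and the group action must all be made simultaneously precise, and this is where a careful bookkeeping of gradings and shifts (as in the Serre-functor relation used for the right dual collection) would be unavoidable.
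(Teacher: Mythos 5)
Your proposal is correct and follows essentially the same route as the paper: reduce to Theorem B by checking that $A = S\#G$ is AS-regular of Gorenstein parameter $d$, that $eA_0e = e(kG)e \cong k$, and that $\nabla A \cong (\nabla S)\#G$ is levelled Koszul, and then identify $(\nabla A)^! \cong ((\nabla S)\#G)^! \cong G\#\nabla(S^!)$ (a step the paper disposes of by citing \cite[Proposition 2.14]{MU16} rather than recomputing). The one point to state more carefully is that levelledness is not ``built into'' Definition \ref{def:beilin}: it relies on $A$ being Koszul, hence generated in degree $1$, so that every arrow of the quiver of $\nabla A$ raises the Beilinson level by exactly one --- this is precisely the explicit quiver computation the paper carries out, and without generation in degree $1$ the diagonal level function can fail (as the paper's non-Koszul examples show).
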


The fact that $S\#G/( e)$ is finite-dimensional is equivalent to the notion of being a noncommutative graded isolated singularity. Moreover, $S^G$ is AS-Gorenstein if every element of $G$ has homological determinant $1$ (see \cite{JZ00}). Note that, in this setting, the Gorenstein parameter of $S$ is $d$.\\

In both cases the algebras satisfy Setting \ref{setting}. Theorem \ref{thm:AIR} describes a situation in which the Gorenstein parameter is $1$, whereas in Theorem \ref{thm:MU}, the Gorenstein parameter always equals the global dimension. The goal of this paper is to extend these two results by comparing the two semiorthogonal decompositions of $\bd(\qgr eAe)$. In particular, our generalisation will cover certain examples where the Gorenstein parameter is not $1$, nor equal to the global dimension. 

\begin{remark}
One other possible approach to finding tilting objects in $\sing(eAe)$ is to consider the preprojective algebra $\Pi$ over $\nabla A$. From \cite{MM11}, this algebra is an AS-regular algebra of parameter $1$. Moreover, there is an equivalence of categories 
\[
	\Gr A\cong \Gr \Pi. 
\]
It could thus be tempting to apply Theorem \ref{thm:AIR} to find a tilting object. However, the corresponding idempotent $\tilde e\in \Pi$ does not satisfy in general condition (\ref{cond:AIR}) of the statement. Therefore this approach does not work directly. 
\end{remark}


\section{A silting object}

Let $A=\oplus_{i\geq 0} A_i$ and $e=e^2\in A$ be as in Setting \ref{setting}. Let $e' := (1-e)$. In this section, we give a silting object in $\sing(eAe)$. This object is also tilting in some specific cases, which we describe. Let 	
\[
	U:= \bigoplus_{i=0}^{\ell-1} A(i)\in\gr  A
\]  
and define
\[
	\sbd(\qgr eAe):= \bd(\qgr eAe)/\langle \q eAe,\ldots, \q eAe(\ell-1)\rangle.
\]
Using Orlov's semiorthogonal decomposition 
\[
	\bd(\qgr eAe) \cong \langle \q eAe, \ldots, \q eAe(\ell-1), \Phi(\sing(eAe))\rangle,	
\]
we have that
\[
	\Phi(\sing(eAe)) \cong {^{\perp}}\langle \q eAe, \ldots, \q eAe(\ell-1)\rangle.
\]
Then, by Lemma \ref{lem:orl_quot}, we obtain a triangle equivalence 
 \[
 	\sing(eAe)\cong \sbd(\qgr eAe).
 \] 

\begin{theorem}
\label{thm:silting} The object $\q e'Ue$ is a silting object in $\sbd(\qgr eAe)$. It is tilting if either 
\begin{enumerate}[a)]
\item {\cite[Theorem 4.1]{AIR15}, \cite[Theorem 4.3]{Ami13}} $\ell = 1$ and $eA_0e' = 0$ or $e'A_0e =0$, or 
\item $\ell =2$ and $eA_0e' = e'A_0 e = 0$. 
\end{enumerate}
\end{theorem}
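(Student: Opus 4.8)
The plan is to obtain $\q e'Ue$ as the ``extra'' summand of the Minamoto--Mori tilting object after passing to the Verdier quotient. First I would transport Theorem \ref{thm:MM_tilting} along the equivalence $\Psi\colon\qgr A\xrightarrow{\sim}\qgr eAe$: since $\Psi$ is exact it induces $\bd(\qgr A)\cong\bd(\qgr eAe)$ carrying $\q U=\bigoplus_{i=0}^{\ell-1}\q A(i)$ to $\q Ue$, so $\q Ue$ is a tilting object with $\End_{\bd(\qgr eAe)}(\q Ue)\cong\nabla A$ and Theorem \ref{thm:keller} gives $\bd(\qgr eAe)\cong\bd(\nabla A)$. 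Splitting $Ae=eAe\oplus e'Ae$ yields $\q Ue=\q eUe\oplus\q e'Ue$ with $\q eUe=\bigoplus_i\q eAe(i)$, and a short check (the diagonal idempotent $\tilde e_i$ acts on $\q A(i)e$ as left multiplication by $e$) shows that this splitting is the one induced by $\tilde e$ and $1-\tilde e$ in $\nabla A$. Hence under the equivalence above $\q eUe$ and $\q e'Ue$ correspond to the projective $\nabla A$-modules $\tilde e\nabla A$ and $(1-\tilde e)\nabla A$. As $\langle\q eAe,\dots,\q eAe(\ell-1)\rangle=\langle\q eUe\rangle=:\Sub$, this identifies $\sbd(\qgr eAe)$ with $\bd(\nabla A)/\langle\tilde e\nabla A\rangle$ and $\q e'Ue$ with the image of $(1-\tilde e)\nabla A$.

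Generation is then automatic: $\q Ue$ generates $\bd(\qgr eAe)$ and its summand $\q eUe$ becomes zero in the quotient, so $\langle\q e'Ue\rangle=\sbd(\qgr eAe)$. For the self-extension conditions I would use that $\Sub$ is admissible (it is the first part of Orlov's semiorthogonal decomposition), so $\sbd(\qgr eAe)\cong{}^{\perp}\Sub$ and $\q e'Ue$ is represented by the component $b\in{}^{\perp}\Sub$ of $T_2:=\q e'Ue$, sitting in a triangle $b\to T_2\to a\to b[1]$ with $a\in\Sub$. Since $b\in{}^{\perp}\Sub$ kills every map into $\Sub$, applying $\Hom(b,-)$ gives $\Hom_{\sbd}(\q e'Ue,\q e'Ue[i])\cong\Hom(b,b[i])\cong\Hom(b,T_2[i])$, and then applying $\Hom(-,T_2[i])$ to the same triangle yields an exact sequence
\[
\Hom(a,T_2[i])\to\Hom(T_2,T_2[i])\to\Hom(b,T_2[i])\to\Hom(a,T_2[i+1])\to\Hom(T_2,T_2[i+1]).
\]
Because $\q Ue$ is tilting, $\Hom(T_2,T_2[j])=0$ for $j\neq0$, so everything is controlled by the groups $\Hom(a,T_2[j])$. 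The analysis thus reduces to computing the $\Sub$-approximation $a$ of the projective object $T_2=(1-\tilde e)\nabla A$: $a$ is a bounded complex of copies of $\tilde e\nabla A$, and $\Hom(\tilde e\nabla A,T_2[j])$ is concentrated in degree $0$, so $\Hom(a,T_2[j])$ vanishes outside the cohomological amplitude of $a$. An amplitude estimate from the positive grading of $A$ bounds $a$ below and gives $\Hom(a,T_2[j])=0$ for $j\geq2$, whence $\Hom(b,b[i])=0$ for $i>0$ and $\q e'Ue$ is silting.

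For the tilting statement one must in addition kill the negative self-extensions, i.e.\ force $\Hom(a,T_2[j])=0$ for $j\leq-1$ together with injectivity of $\Hom(a,T_2)\to\End(T_2)$ read off from the sequence above at $i=-1$. Case (a), $\ell=1$, is exactly the situation of \cite[Theorem 4.1]{AIR15} and \cite[Theorem 4.3]{Ami13}, where $\nabla A=A_0$ and $\tilde e=e$, so I would simply invoke those results under the identification above. The genuine work is case (b), $\ell=2$: here $\nabla A$ involves only $A_0$ and $A_1$, and the hypotheses $eA_0e'=e'A_0e=0$ make $e,e'$ central in $A_0$, so $A_0\cong eA_0e\times e'A_0e'$ and the only morphisms between $\tilde e\nabla A$ and $(1-\tilde e)\nabla A$ occur ``in degree $1$'', governed by $eA_1e'$ and $e'A_1e$. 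This makes $\nabla A$ a $2$-levelled algebra and shrinks the $\Sub$-approximation $a$ to amplitude at most one; a direct computation with the multiplication of $\nabla A$ (pairing $e'A_1e$ against $eA_1e'$) then gives $\Hom(a,T_2[j])=0$ in all nonzero degrees and the required injectivity, upgrading silting to tilting, while one simultaneously reads off $\End_{\sbd}(\q e'Ue)\cong(1-\tilde e)\nabla A(1-\tilde e)$, matching Theorem~A.

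The hard part, and the reason for the restrictions in (a) and (b), is precisely the control of the approximation $a$ and hence of the negative self-extensions of $\q e'Ue$. Positivity of the grading suffices to kill positive self-extensions, so silting holds in complete generality; but for $\ell\geq3$ the algebra $\nabla A$ involves $A_2,A_3,\dots$, the approximation $a$ acquires unbounded cohomological amplitude, and negative self-extensions survive, so only silting is available. This is exactly the gap that, for higher Gorenstein parameter, has to be bridged later by the levelled-Koszul mutation machinery used for Theorem B.
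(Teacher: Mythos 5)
Your framework is set up correctly: representing $\sbd(\qgr eAe)$ by ${}^{\perp}\Sub$ with $\Sub=\langle \q eAe,\ldots,\q eAe(\ell-1)\rangle$, taking the decomposition triangle $b\to T_2\to a\to b[1]$, and reducing silting to $\Hom(a,T_2[j])=0$ for $j\geq 2$ (and tilting to the further vanishing for $j\leq -1$ plus injectivity of $\Hom(a,T_2)\to\End(T_2)$) is all valid. The genuine gap is that the two statements carrying all the weight are asserted, not proved. For the silting claim you write that ``an amplitude estimate from the positive grading of $A$ bounds $a$ below and gives $\Hom(a,T_2[j])=0$ for $j\geq 2$'', but no such estimate is given and none is easy: $a$ is the image of $T_2$ under the left adjoint of the inclusion $\Sub\hookrightarrow\bd(\qgr eAe)$, and controlling its cohomological amplitude requires Orlov's explicit truncation functors or the recollement attached to $\tilde e$; positivity of the grading of $A$ does not by itself yield the bound, and you never compute $a$. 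This vanishing is precisely the content of silting reduction, which is how the paper disposes of it in one line, by citing Iyama--Yang \cite[Theorem 3.6]{IY18}: a direct summand of a tilting object becomes silting in the Verdier quotient by the thick subcategory generated by the complementary summands. The same criticism applies to your case (b): ``a direct computation with the multiplication of $\nabla A$ \ldots gives $\Hom(a,T_2[j])=0$ in all nonzero degrees and the required injectivity'' is never carried out, and it is not a triviality, because in your chosen realization ${}^{\perp}\Sub$ of the quotient the approximation $a$ is genuinely nonzero whenever $eA_1e'\neq 0$ (one has $\Hom(\q e'Ae,\q eAe(1))\cong eA_1e'$, so $T_2\notin{}^{\perp}\Sub$ in general).

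For comparison, the paper's proof of the tilting cases avoids approximations entirely. Under $eA_0e'=e'A_0e=0$ and $\ell=2$, all degree-zero Homs going the wrong way among the summands of $\q Ue$ are identified with $eA_0e'$, $e'A_0e$ or $A_{-1}=0$, and all nonzero-shift Homs vanish because $\q Ue$ is tilting; hence one gets a finer semiorthogonal decomposition $\bd(\qgr eAe)=\langle \q eAe,\q e'Ae,\q e'Ae(1),\q eAe(1)\rangle$. The quotient is then realized in two steps as the full subcategory $\langle \q e'Ae,\q e'Ae(1)\rangle$, namely as $\langle \q eAe(1)\rangle^{\perp}$ inside ${}^{\perp}\langle \q eAe\rangle\cong\bd(\qgr eAe)/\langle \q eAe\rangle$ --- note the deliberate mix of a left and a right orthogonal, which is exactly what makes $T_2=\q e'Ue$ itself lie in the subcategory representing the quotient. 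In that realization $b=T_2$ and $a=0$, so the self-Hom vanishing is inherited from $\bd(\qgr eAe)$ with no computation; this trivialization of the approximation is the fact your ``direct computation'' would have to establish, and as written your argument does not close that gap.
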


\begin{remark}
In \cite[Theorem 1.6]{IT13}, the authors proved b) in the case $A = k[x,y]\#G$, where $k[x,y]$ is the polynomial ring, $G<SL(2,k)$ is finite, $e = \frac{1}{|G|}\sum_{g\in G} g$ and the grading is induced by putting the variables in degree $1$. In their setting, we have that $A_0 = kG$ is semisimple, so $eA_0e' = e'A_0e = 0$. 
\end{remark}

\begin{proof}
By Theorem \ref{thm:MM_tilting}, $\q U$ is a tilting object in $\bd(\qgr A)$. Moreover, recall that the functor 
\begin{equation*}
\begin{split}
	\Psi:\gr A&\to \gr eAe\\
	M &\mapsto Me
\end{split}
\end{equation*}
described in (\ref{ali:equi_qgr}), induces an equivalence of categories 
\[
	\qgr A\xrightarrow{\sim} \qgr eAe. 
\]
Thus, $\q Ue$ is a tilting object in $\bd(\qgr eAe)$. Now, each $\q eAe(i)$ is a projective summand of $\q Ue$. Therefore, by \cite[Theorem 3.6]{IY18}, $\q e'Ue$ is a silting object in 
 \[
 	\sbd(\qgr eAe)\cong \sing(eAe).
 \] 
 
Now assume that $\ell =1$ and $eA_0 e' = 0$, the case where $e'A_0e = 0$ being similar. Then, 
\begin{equation}
\label{eq:hom_vanish}
	\Hom_{\bd(\qgr eAe)} (\q e'Ae, \q eAe) \cong eA_0e' = 0, 
\end{equation}
so the tilting object $\q Ue = \q Ae\in \bd(\qgr eAe)$ gives rise to a semiorthogonal decomposition 
\[
	\bd(\qgr eAe)\cong \langle \q eAe, \q e'Ae\rangle.
\]  
Therefore, $\q e'Ue= \q e'Ae$ is a tilting object in 
\[
	\langle \q e'Ae\rangle\cong {^{\perp}}\langle \q eAe\rangle\cong \bd(\qgr eAe)/\langle \q eAe\rangle = \sbd(\qgr eAe), 
\]
since it is a direct summand of $\q Ue$.\\

Similarly, if $\ell = 2$ and $eA_0e' = e'A_0 e =0$, then, in addition to (\ref{eq:hom_vanish}), we have  
\[
	\Hom_{\bd(\qgr eAe)} (\q eAe, \q e'Ae)\cong e'A_0e = 0.
\]
 Thus, the tilting object $\q Ue = \q Ae \oplus \q Ae(1)$ gives rise to a semiorthogonal decomposition 
 \[
 	\bd(\qgr eAe)\cong \langle \q eAe,\q e'Ae, \q e'Ae(1), \q eAe(1)\rangle,
\]
so $\q e'Ae \oplus \q e'Ae(1) \oplus \q eAe(1)$ is a tilting object in 
\[
	\langle \q e'Ae , \q e'Ae(1) , \q eAe(1)\rangle \cong {^{\perp}}\langle eAe\rangle \cong \bd(\qgr eAe)/\langle\q eAe\rangle  
\]
and $\q e'Ue = \q e'Ae \oplus \q e'Ae(1)$ is a tilting object in 
\[
	\langle \q e'Ae , \q e'Ae(1) \rangle \cong \langle eAe(1)\rangle{^{\perp}}\cong \bd(\qgr eAe)/\langle \q eAe, \q eAe(1)\rangle= \sbd(\qgr eAe),
\]
where the right orthogonal is taken in $\bd(\qgr eAe)/\langle \q eAe\rangle$.
\end{proof}

We now compute the endomorphism ring of the silting object we found.

\begin{lemma}
There is an isomorphism of graded $k$-algebras
\[
	\End_{\sbd(\qgr eAe)}(\q e' U e)\cong (1-\tilde e)(\nabla A)( 1-\tilde e). 
\]
\end{lemma}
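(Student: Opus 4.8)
The plan is to compute the endomorphism ring of $\q e'Ue$ in $\sbd(\qgr eAe)$ by first transporting the computation to $\bd(\qgr A)$ via the equivalences already established, and then identifying the relevant morphism spaces with entries of the Beilinson algebra $\nabla A$. Recall from Theorem \ref{thm:MM_tilting} that $\q U = \oplus_{i=0}^{\ell-1}\q A(i)$ is a tilting object in $\bd(\qgr A)$ with $\End_{\bd(\qgr A)}(\q U)\cong \nabla A$, and that $\Psi$ induces the equivalence $\qgr A\cong \qgr eAe$ sending $\q U$ to $\q Ue$. Under the idempotent decomposition $A = eA\oplus e'A$ as a left $A$-module, the summand $\q e'Ue$ corresponds to the idempotent $1-\tilde e\in\nabla A$: indeed, $\tilde e = \sum_{i=0}^{\ell-1}\tilde e_i$ selects, in copy $i$ of $A(i)$, exactly the direct summand $eA(i)$, so its complement $1-\tilde e$ picks out $\oplus_{i=0}^{\ell-1} e'A(i)$, which is $e'U$.

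First I would record that, since $\q Ue$ is tilting in $\bd(\qgr eAe)$ and $\q eAe(i)$ for $0\le i\le \ell-1$ are projective summands, we have by the identification $\End_{\bd(\qgr eAe)}(\q Ue)\cong \nabla A$ a matching of the idempotents: the summands $\q eAe(i)$ correspond to $\tilde e_i$ and the summands $\q e'Ae(i)$ to the complementary pieces of $1-\tilde e$. Thus $\q e'Ue$ is precisely the summand of the tilting object cut out by $1-\tilde e$, and so at the level of graded endomorphism rings in $\bd(\qgr eAe)$ one has the corner ring identity
\[
	\End^{\bullet}_{\bd(\qgr eAe)}(\q e'Ue)\cong (1-\tilde e)(\nabla A)(1-\tilde e),
\]
where $\End^{\bullet}$ denotes $\oplus_{j}\Hom(-,-[j])$; but since $\q e'Ue$ is a summand of a tilting object, the graded pieces with $j\ne 0$ vanish, so this is an honest (graded by internal degree) $k$-algebra isomorphism.

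The remaining and genuinely substantive point is to pass from $\End$ computed in $\bd(\qgr eAe)$ to $\End$ computed in the Verdier quotient $\sbd(\qgr eAe) = \bd(\qgr eAe)/\langle \q eAe,\ldots,\q eAe(\ell-1)\rangle$, and to see that this passage does not change the endomorphism ring of $\q e'Ue$. The key is that the quotient is computed by the orthogonality established just before the statement: $\sing(eAe)\cong \sbd(\qgr eAe)$ is realised inside $\bd(\qgr eAe)$ as a right (or left) orthogonal to $\langle\q eAe,\ldots,\q eAe(\ell-1)\rangle$ via Lemma \ref{lem:orl_quot}, and a fully faithful embedding preserves Hom-spaces. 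Concretely, by \cite[Theorem 3.6]{IY18} the silting object $\q e'Ue$ in the quotient has the same morphisms as in $\bd(\qgr eAe)$ modulo those factoring through the removed projective summands $\q eAe(i)$; one checks these contribute nothing extra because the relevant $\Hom$ corner already excludes the idempotents $\tilde e_i$.

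I expect the main obstacle to be bookkeeping the precise correspondence between the block structure of $\nabla A$ and the summand decomposition of $\q Ue$, in particular verifying that restricting to the corner $1-\tilde e$ on the ring side exactly matches restricting to the summand $\q e'Ue$ on the category side, and that the Verdier localisation that defines $\sbd(\qgr eAe)$ acts by deleting exactly the vertices in the support of $\tilde e$ without altering the remaining $\Hom$-spaces. Once the idempotent $1-\tilde e$ is correctly matched to $\q e'Ue$, the isomorphism follows formally from the general fact that for a summand $X = eT$ of a tilting object $T$ with $\End(T)\cong B$, one has $\End(X)\cong eBe$, combined with the fact that passing to the orthogonal complement in the semiorthogonal decomposition is fully faithful on this summand.
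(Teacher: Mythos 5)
Your overall strategy is the same as the paper's: identify $\q e'Ue$ with the summand of the tilting object $\q Ue$ cut out by $1-\tilde e$, so that $\End_{\bd(\qgr eAe)}(\q e'Ue)\cong(1-\tilde e)(\nabla A)(1-\tilde e)$ by Theorem \ref{thm:MM_tilting}, and then argue that passing to $\sbd(\qgr eAe)$ changes nothing. (The paper's own proof is exactly this corner computation, stated in two lines.) The gap is in the step you yourself flag as substantive, and neither of your two justifications closes it. The full-faithfulness argument via Lemma \ref{lem:orl_quot} does not apply to $\q e'Ue$: the quotient is equivalent to ${}^{\perp}\langle \q eAe,\ldots,\q eAe(\ell-1)\rangle$, and Hom-spaces are preserved only for objects lying \emph{in} that orthogonal. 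But $\Hom_{\bd(\qgr eAe)}(\q e'Ue,\q eAe(i))\cong \tilde e_i(\nabla A)(1-\tilde e)\cong\bigoplus_{j\le i}eA_{i-j}e'$, which Setting \ref{setting} does not force to vanish; in general $\pi(\q e'Ue)$ is identified with the \emph{projection} of $\q e'Ue$ onto the orthogonal, not with $\q e'Ue$ itself, so full faithfulness buys nothing directly.

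Your second argument, via silting reduction, starts correctly: by \cite[Theorem 3.6]{IY18} one has
\[
	\End_{\sbd(\qgr eAe)}(\q e'Ue)\cong \End_{\bd(\qgr eAe)}(\q e'Ue)\big/[\mathcal P],
\]
where $[\mathcal P]$ is the ideal of endomorphisms factoring through $\add\bigl(\oplus_{i}\q eAe(i)\bigr)$. But the conclusion that $[\mathcal P]$ ``contributes nothing extra because the relevant Hom corner already excludes the idempotents $\tilde e_i$'' is a non sequitur: elements of the corner $(1-\tilde e)(\nabla A)(1-\tilde e)$ can perfectly well factor through the deleted summands. Under the identification with $\nabla A$, the ideal is $(1-\tilde e)(\nabla A)\tilde e(\nabla A)(1-\tilde e)$, whose $(i,j)$-entry is $\sum_{j\le k\le i}e'A_{i-k}eA_{k-j}e'$; already the diagonal entries contain $e'A_0eA_0e'$, and nothing in Setting \ref{setting} alone kills this. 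The vanishing of precisely this ideal is what hypotheses a) and b) of Theorem \ref{thm:silting} provide (a factor $eA_0e'=0$ or $e'A_0e=0$ annihilates every product above), which is consistent with the paper applying the lemma, in the ensuing corollary, only under those hypotheses; moreover, under a)/b) the orthogonality computations in the proof of Theorem \ref{thm:silting} place $\q e'Ue$ inside the relevant (iterated) orthogonals, at which point your full-faithfulness argument does become valid. As written, your proof needs either the vanishing of $(1-\tilde e)(\nabla A)\tilde e(\nabla A)(1-\tilde e)$ or those orthogonality relations, and it establishes neither.
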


\begin{proof}
We have that 
\begin{align*}
	\End_{\sbd(\qgr eAe)}(\q e' U e)&\cong (1-\tilde e)\End_{\sbd(\qgr eAe)}(\q U e)(1-\tilde e)\\
							& \cong(1-\tilde e)(\nabla A)( 1-\tilde e),
\end{align*}
where the last isomorphism is given in Theorem \ref{thm:MM_tilting}.
\end{proof}

\begin{corollary}
If $A$ satisfies hypothesis a) or b) in Theorem \ref{thm:silting}, then there is a triangle equivalence 
\[
	\sing(eAe)\cong\bd((1-\tilde e)(\nabla A)( 1-\tilde e)).
\]
\end{corollary}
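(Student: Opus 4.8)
The plan is to combine the two preceding results—Theorem~\ref{thm:silting} and the Lemma computing the endomorphism ring—with Keller's equivalence (Theorem~\ref{thm:keller}). Under hypothesis a) or b), Theorem~\ref{thm:silting} upgrades the silting object $\q e'Ue$ to a \emph{tilting} object in $\sbd(\qgr eAe)$, and the preceding Lemma identifies its endomorphism algebra as $(1-\tilde e)(\nabla A)(1-\tilde e)$. So the target algebra is already in hand; what remains is to check that the hypotheses of Theorem~\ref{thm:keller} are met and then to transport the resulting equivalence along the triangle equivalence $\sing(eAe)\cong\sbd(\qgr eAe)$ established just before Theorem~\ref{thm:silting}.

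First I would record that $\sbd(\qgr eAe)$ is an algebraic Krull--Schmidt triangulated category. This is immediate: it is triangle equivalent to $\sing(eAe)$, which is algebraic and Krull--Schmidt by \cite[Propositions~1.3~\&~1.4]{IT13}, and both properties are preserved under triangle equivalence.

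Next—and this is the one point that requires an argument rather than a citation—I would verify that $\gldim (1-\tilde e)(\nabla A)(1-\tilde e)<\infty$. The key observation is that in both cases a) and b) the tilting object $\q e'Ue$ does not merely exist abstractly: the proof of Theorem~\ref{thm:silting} exhibits it, together with its indecomposable summands, as (the sum of) a full strong exceptional collection cut out of the semiorthogonal decomposition of $\bd(\qgr eAe)\cong\bd(\nabla A)$. Hence $(1-\tilde e)(\nabla A)(1-\tilde e)\cong\End(\E')$ for a full strong exceptional collection $\E'$ in $\sbd(\qgr eAe)$, and such an endomorphism algebra has finite global dimension, as recorded immediately after the definition of $\End(\E)$.

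With both hypotheses verified, Theorem~\ref{thm:keller} yields a triangle equivalence $\sbd(\qgr eAe)\xrightarrow{\sim}\bd((1-\tilde e)(\nabla A)(1-\tilde e))$, and composing with $\sing(eAe)\cong\sbd(\qgr eAe)$ gives the claimed equivalence. I expect the finite global dimension step to be the only genuine obstacle: a priori a corner algebra $(1-\tilde e)\Lambda(1-\tilde e)$ of a finite global dimension algebra $\Lambda$ need not inherit finite global dimension, so the argument must route through the exceptional-collection description of $(1-\tilde e)(\nabla A)(1-\tilde e)$ rather than appeal naively to $\gldim\nabla A<\infty$.
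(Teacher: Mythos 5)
Your outer strategy is exactly the paper's: transport along $\sing(eAe)\cong\sbd(\qgr eAe)$, feed the tilting object $\q e'Ue$ and the Lemma's computation $\End_{\sbd(\qgr eAe)}(\q e'Ue)\cong(1-\tilde e)(\nabla A)(1-\tilde e)$ into Theorem \ref{thm:keller}, and check its two hypotheses. The Krull--Schmidt/algebraicity check is fine. But your justification of $\gldim(1-\tilde e)(\nabla A)(1-\tilde e)<\infty$ has a genuine gap: the proof of Theorem \ref{thm:silting} does \emph{not} exhibit $\q e'Ue$ as the sum of a full strong exceptional collection, and in the generality of Setting \ref{setting} it cannot. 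The indecomposable summands of $\q Ue$ correspond under $\End(\q Ue)\cong\nabla A$ to the indecomposable projective $\nabla A$-modules, so a summand $\q fAe(i)$ has endomorphism ring $fA_0f$, which need not be $k$: nothing in cases a) or b) forces $A_0$ to be semisimple or basic (the hypothesis $eA_0e\cong k$ appears only later, in Theorem \ref{thm:main}, and indeed in Example \ref{ex:silting} the algebra $A_0$ is not semisimple). The proof of Theorem \ref{thm:silting} accordingly works with semiorthogonal decompositions whose pieces are the subcategories generated by projective summands, not with exceptional collections, so the fact recorded after the definition of $\End(\E)$ --- that endomorphism algebras of exceptional collections have finite global dimension --- does not apply as you invoke it.

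Your instinct that one cannot naively pass finite global dimension from $\nabla A$ to the corner $(1-\tilde e)(\nabla A)(1-\tilde e)$ is correct; the paper's repair is different and more elementary than yours: $(1-\tilde e)(\nabla A)(1-\tilde e)$ is an \emph{ordered} finite-dimensional algebra, and ordered algebras have finite global dimension. Orderedness, unlike finite global dimension, does pass to corners, since $f_i(e\Lambda e)f_j\subseteq f_i\Lambda f_j$ and hence morphism spaces between the remaining projectives still vanish in one direction. Replacing your exceptional-collection step with this observation closes the gap and recovers the paper's two-line proof.
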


\begin{proof}
This is a direct application of Theorem \ref{thm:keller}. The category $\sing(eAe)$ is an algebraic Krull--Schmidt triangulated category. Moreover, $(1-\tilde e)(\nabla A)( 1-\tilde e)$ is an ordered finite-dimensional algebra, so it has finite global dimension. 
\end{proof}

\begin{example}
\label{ex:silting}
Let $A = k[x_1,x_2,x_3]\#G$, where $G = \frac{1}{5}(1,2,2)<SL(3,k)$. We refer to Definition \ref{def:mckay} for explanations on how to construct the following quivers. We have that $A\cong kQ/(R)$, where $Q$ is the McKay quiver

\begin{center}
\begin{tikzpicture}[scale = 1]

    \tikzstyle{ann} = [fill=white,font=\footnotesize,inner sep=1pt]

\def \n {5}
\def \radius {2cm}
\def \second {0.30 cm}
\def \margin {8} 
\def \correction{90}
\def \adjust{56}

\foreach \s in {1,...,3}
{
  \draw[decoration={markings,mark=at position 1 with {\arrow[scale=1.4]{>}}},
    postaction={decorate}, shorten >=0.4pt, very thick] ({360/\n * (\s - 1) + \correction/\n + \margin}:\radius) 
    to ({360/\n * (\s) + \correction/\n-\margin}:\radius); 
    }
      \draw[decoration={markings,mark=at position 1 with {\arrow[scale=2]{>}}},
    postaction={decorate}, shorten >=0.4pt] ({360/5 * (4 - 1) + \correction/5 + \margin}:\radius) 
    to ({360/5 * (4) + \correction/5-\margin}:\radius); 
      \draw[decoration={markings,mark=at position 1 with {\arrow[scale=1.4]{>}}},
    postaction={decorate}, shorten >=0.4pt, very thick] ({360/5 * (5 - 1) + \correction/5 + \margin}:\radius) 
    to ({360/5 * (5) + \correction/5-\margin}:\radius); 

\foreach \s in {2,...,4}
{
  \draw[decoration={markings,mark=at position 1 with {\arrow[scale=1.4]{>}}},
    postaction={decorate}, shorten >=0.4pt, very thick] ({360/\n * (\s +2)+\adjust/\n+\margin-2}:\radius-0.2cm) to ({360/\n * (\s - 1) + \adjust/\n+\margin-4}:\radius-0.4cm);
}
  \draw[decoration={markings,mark=at position 1 with {\arrow[scale=2]{>}}},
    postaction={decorate}, shorten >=0.4pt] ({360/\n * (1 +2)+\adjust/\n+\margin-2}:\radius-0.2cm) to ({360/\n * (1 - 1) + \adjust/\n+\margin-4}:\radius-0.4cm);
      \draw[decoration={markings,mark=at position 1 with {\arrow[scale=2]{>}}},
    postaction={decorate}, shorten >=0.4pt] ({360/\n * (5 +2)+\adjust/\n+\margin-2}:\radius-0.2cm) to ({360/\n * (5 - 1) + \adjust/\n+\margin-4}:\radius-0.4cm);

\foreach \s in {2,...,4}
{
  \draw[decoration={markings,mark=at position 1 with {\arrow[scale=1.4]{>}}},
    postaction={decorate}, shorten >=0.4pt, very thick] ({360/\n * (\s +2)+\adjust/\n+\margin+6}:\radius-0.2cm) to ({360/\n * (\s - 1) + \adjust/\n+\margin-12}:\radius-0.3cm);
}
  \draw[decoration={markings,mark=at position 1 with {\arrow[scale=2]{>}}},
    postaction={decorate}, shorten >=0.4pt] ({360/\n * (1 +2)+\adjust/\n+\margin+6}:\radius-0.2cm) to ({360/\n * (1 - 1) + \adjust/\n+\margin-12}:\radius-0.3cm);
      \draw[decoration={markings,mark=at position 1 with {\arrow[scale=2]{>}}},
    postaction={decorate}, shorten >=0.4pt] ({360/\n * (5 +2)+\adjust/\n+\margin+6}:\radius-0.2cm) to ({360/\n * (5 - 1) + \adjust/\n+\margin-12}:\radius-0.3cm);

\node[] at ({360/\n * (0 )+ \correction/\n}:\radius + 0.2 cm) {$4$};

\node[] at ({360/\n * (2 )+ \correction/\n}:\radius + 0.2 cm) {$1$};
\node[] at ({360/\n * (3 )+ \correction/\n}:\radius + 0.2 cm) {$2$};
\node[] at ({360/\n * (4 )+ \correction/\n}:\radius + 0.2 cm) {$3$};

\node[ann] at (0.35, -0.35) {$x_2$}; 
\node[ann] at (0.45, -0.7) {$x_3$};
\node[ann] at (0.43, 0.05) {$x_2$}; 
\node[ann] at (0.8, 0.35) {$x_3$};
\node[ann] at (0, 0.5) {$x_2$}; 
\node[ann] at (0, 0.85) {$x_3$};
\node[ann] at (-0.43, 0.05) {$x_2$}; 
\node[ann] at (-0.8, 0.35) {$x_3$};
\node[ann] at (-0.35, -0.35) {$x_2$}; 
\node[ann] at (-0.45, -0.7) {$x_3$};
\node[ann] at (1.7, -0.4) {$x_1$};
\node[ann] at (-1.7, -0.4) {$x_1$};
\node[ann] at (0, -1.77) {$x_1$};
\node[ann] at (-1, 1.5) {$x_1$};
\node[ann] at (1, 1.5) {$x_1$};

 \tikzstyle{every node}=[draw, rectangle, fill = white, minimum size=5pt,
                            inner sep=2pt] 

\node[] at ({360/\n * (1 )+ \correction/\n}:\radius + 0.2 cm) {$0$};

\end{tikzpicture}    
\end{center}
and the relations are given by $x_ix_j-x_jx_i=0$. We endow the algebra $A$ with a grading by putting the thick arrows in degree $1$ and the other arrows in degree $0$. Let $e$ be the idempotent corresponding to vertex $0$. Then $A$ is $3$-AS-regular of Gorenstein parameter $2$. Moreover, $eAe\cong S^G$ is an isolated singularity, so $A/AeA$ is finite-dimensional. We also have that $S^G$ is $AS$-Gorenstein, because $G<SL(3,k$). Finally, $eA_0e' = e'A_0e = 0$, so, by Theorem \ref{thm:silting}, $\q e'Ae\oplus \q e'Ae(1)$ is a tilting object in $\sbd(\qgr eAe)$. We thus have a triangle equivalence 
\[
	\sing(S^G)\cong \bd((1-\tilde e)(\nabla A)(1-\tilde e)).
\]

The algebra $\nabla A$ has the following quiver 
\begin{center}
\begin{tikzpicture}
 \tikzstyle{every node}=[draw,circle,fill=black,minimum size=5pt,
                            inner sep=0pt]
    \draw [decoration={markings,mark=at position 1 with {\arrow[scale=2]{>}}},
    postaction={decorate}, shorten >=0.4pt] (0,0.25) -- (1,0.25);  
      \draw [decoration={markings,mark=at position 1 with {\arrow[scale=2]{>}}},
    postaction={decorate}, shorten >=0.4pt] (0,-0.25) -- (1,-0.25);
    \draw [decoration={markings,mark=at position 1 with {\arrow[scale=2]{>}}},
    postaction={decorate}, shorten >=0.4pt] (2.5,0) -- (1.5,0);  
        \draw [decoration={markings,mark=at position 1 with {\arrow[scale=2]{>}}},
    postaction={decorate}, shorten >=0.4pt] (3,0.25) -- (4,0.25);  
        \draw [decoration={markings,mark=at position 1 with {\arrow[scale=2]{>}}},
    postaction={decorate}, shorten >=0.4pt] (3,-0.25) -- (4,-0.25);  
           \draw [decoration={markings,mark=at position 1 with {\arrow[scale=2]{>}}},
    postaction={decorate}, shorten >=0.4pt] (-0.25,-0.25) -- (-0.25,-1.25);  
           \draw [decoration={markings,mark=at position 1 with {\arrow[scale=2]{>}}},
    postaction={decorate}, shorten >=0.4pt] (1.25,-0.25) -- (1.25,-1.25);  
               \draw [decoration={markings,mark=at position 1 with {\arrow[scale=2]{>}}},
    postaction={decorate}, shorten >=0.4pt] (0,-1.25) -- (1,-1.25);  
           \draw [decoration={markings,mark=at position 1 with {\arrow[scale=2]{>}}},
    postaction={decorate}, shorten >=0.4pt] (0,-1.75) -- (1,-1.75);  
               \draw [decoration={markings,mark=at position 1 with {\arrow[scale=2]{>}}},
    postaction={decorate}, shorten >=0.4pt] (-0.5,-1.5) -- (-1.5,-1.5);  
               \draw [decoration={markings,mark=at position 1 with {\arrow[scale=2]{>}}},
    postaction={decorate}, shorten >=0.4pt] (-3,-1.75) -- (-2,-1.75);  
               \draw [decoration={markings,mark=at position 1 with {\arrow[scale=2]{>}}},
    postaction={decorate}, shorten >=0.4pt] (-3,-1.25) -- (-2,-1.25);  
                   \draw [decoration={markings,mark=at position 1 with {\arrow[scale=2]{>}}},
    postaction={decorate}, shorten >=0.4pt] (-3.25,-2.75) -- (-3.25,-1.75);  
                       \draw [decoration={markings,mark=at position 1 with {\arrow[scale=2]{>}}},
    postaction={decorate}, shorten >=0.4pt] (-3,-2.7) -- (-0.5,-1.7);  
                       \draw [decoration={markings,mark=at position 1 with {\arrow[scale=2]{>}}},
    postaction={decorate}, shorten >=0.4pt] (-2.9,-3.01) -- (-0.25,-1.95);  
                           \draw [decoration={markings,mark=at position 1 with {\arrow[scale=2]{>}}},
    postaction={decorate}, shorten >=0.4pt] (4.25,-0.25) -- (4.25,-1.25);
                               \draw [decoration={markings,mark=at position 1 with {\arrow[scale=2]{>}}},
    postaction={decorate}, shorten >=0.4pt] (1.5,-0.2) -- (4,-1.25);
                               \draw [decoration={markings,mark=at position 1 with {\arrow[scale=2]{>}}},
    postaction={decorate}, shorten >=0.4pt] (1.4,-0.45) -- (4,-1.54);

    	\draw[decoration={markings,mark=at position 0.999 with {\arrow[scale=2]{>}}}, postaction = {decorate}, shorten >=0.4 pt] (4,0.5) to [out = 90, in = 90] (-3, -1);
      \draw[decoration={markings,mark=at position 0.999 with {\arrow[scale=2]{>}}},
    postaction={decorate}, shorten >=0.4pt] (4.5,0.5)  to [out=90,in= 90] (-3.5,-1);

 \tikzstyle{every node}=[draw, circle, fill = white, draw = none, minimum size=1pt,
                            inner sep=0pt]      
     \draw (-0.25, 0) node{$1^0$};
     \draw (1.25, 0) node{$3^0$};
     \draw(2.75, 0) node{$2^0$};
     \draw(4.25, 0) node{$4^0$};
     \draw(-0.25, -1.5) node{$2^1$};
     \draw(1.25, -1.5) node{$4^1$};
      \draw(-1.75, -1.5) node{$3^1$};
       \draw(-3.25, -1.5) node{$1^1$};

\tikzstyle{every node}=[draw, rectangle, fill = white, draw = none, minimum size=5pt,
                            inner sep=0.1pt] 
                            
   \draw(0.3, 1.75) node {$x_2$};
   \draw(0.7, 2.15) node {$x_3$};
 \draw(3.5, 0.25) node {$x_2$}; 
 \draw(3.5, -0.25) node {$x_3$};         
 \draw(2, 0) node {$x_1$};
  \draw(0.5, 0.25) node {$x_2$}; 
 \draw(0.5, -0.25) node {$x_3$};
   \draw(0.5, -1.25) node {$x_2$}; 
 \draw(0.5, -1.75) node {$x_3$};
  \draw(1.25, -0.75) node {$x_1$};
    \draw(-0.25, -0.75) node {$x_1$};
     \draw(-1, -1.5) node {$x_1$};
        \draw(-2.5, -1.25) node {$x_2$}; 
 \draw(-2.5, -1.75) node {$x_3$};
  \draw(4.25, -0.75) node {$x_1$};
   \draw(-3.25, -2.25) node {$x_1$};
    \draw(-1.9, -2.25) node {$x_2$};
     \draw(-1.4, -2.45) node {$x_3$};
 \draw(2.6, -0.6) node {$x_2$};
     \draw(2.8, -1.05) node {$x_3$};

 \tikzstyle{every node}=[draw, rectangle, fill = white, minimum size=5pt,
                            inner sep=2pt] 
                            
               \draw(-3.25, -3) node {$0^0$};
       \draw (4.25, -1.5) node {$0^1$};

\end{tikzpicture}
\end{center}
and the relations are induced from the relations in $A$. The idempotent $\tilde e$ induced from $e$ corresponds to the vertices in the boxes. Thus, $(1-\tilde e)(\nabla A)(1-\tilde e)$ is given by the following quiver

\begin{center}
\begin{tikzpicture}
 \tikzstyle{every node}=[draw,circle,fill=black,minimum size=5pt,
                            inner sep=0pt]
    \draw [decoration={markings,mark=at position 1 with {\arrow[scale=2]{>}}},
    postaction={decorate}, shorten >=0.4pt] (0,0.25) -- (1,0.25);  
      \draw [decoration={markings,mark=at position 1 with {\arrow[scale=2]{>}}},
    postaction={decorate}, shorten >=0.4pt] (0,-0.25) -- (1,-0.25);
    \draw [decoration={markings,mark=at position 1 with {\arrow[scale=2]{>}}},
    postaction={decorate}, shorten >=0.4pt] (2.5,0) -- (1.5,0);  
        \draw [decoration={markings,mark=at position 1 with {\arrow[scale=2]{>}}},
    postaction={decorate}, shorten >=0.4pt] (3,0.25) -- (4,0.25);  
        \draw [decoration={markings,mark=at position 1 with {\arrow[scale=2]{>}}},
    postaction={decorate}, shorten >=0.4pt] (3,-0.25) -- (4,-0.25);  
           \draw [decoration={markings,mark=at position 1 with {\arrow[scale=2]{>}}},
    postaction={decorate}, shorten >=0.4pt] (-0.25,-0.25) -- (-0.25,-1.25);  
           \draw [decoration={markings,mark=at position 1 with {\arrow[scale=2]{>}}},
    postaction={decorate}, shorten >=0.4pt] (1.25,-0.25) -- (1.25,-1.25);  
               \draw [decoration={markings,mark=at position 1 with {\arrow[scale=2]{>}}},
    postaction={decorate}, shorten >=0.4pt] (0,-1.25) -- (1,-1.25);  
           \draw [decoration={markings,mark=at position 1 with {\arrow[scale=2]{>}}},
    postaction={decorate}, shorten >=0.4pt] (0,-1.75) -- (1,-1.75);  
               \draw [decoration={markings,mark=at position 1 with {\arrow[scale=2]{>}}},
    postaction={decorate}, shorten >=0.4pt] (-0.5,-1.5) -- (-1.5,-1.5);  
               \draw [decoration={markings,mark=at position 1 with {\arrow[scale=2]{>}}},
    postaction={decorate}, shorten >=0.4pt] (-3,-1.75) -- (-2,-1.75);  
               \draw [decoration={markings,mark=at position 1 with {\arrow[scale=2]{>}}},
    postaction={decorate}, shorten >=0.4pt] (-3,-1.25) -- (-2,-1.25);  
    
    	\draw[decoration={markings,mark=at position 0.999 with {\arrow[scale=2]{>}}}, postaction = {decorate}, shorten >=0.4 pt] (4,0.5) to [out = 90, in = 90] (-3, -1);
      \draw[decoration={markings,mark=at position 0.999 with {\arrow[scale=2]{>}}},
    postaction={decorate}, shorten >=0.4pt] (4.5,0.5)  to [out=90,in= 90] (-3.5,-1);

 \tikzstyle{every node}=[draw, circle, fill = white, draw = none, minimum size=1pt,
                            inner sep=0pt]      
     \draw (-0.25, 0) node{$1^0$};
     \draw (1.25, 0) node{$3^0$};
     \draw(2.75, 0) node{$2^0$};
     \draw(4.25, 0) node{$4^0$};
     \draw(-0.25, -1.5) node{$2^1$};
     \draw(1.25, -1.5) node{$4^1$};
      \draw(-1.75, -1.5) node{$3^1$};
       \draw(-3.25, -1.5) node{$1^1$};
       
\tikzstyle{every node}=[draw, rectangle, fill = white, draw = none, minimum size=5pt,
                            inner sep=0.1pt] 
                            
   \draw(0.3, 1.75) node {$x_2$};
   \draw(0.7, 2.15) node {$x_3$};
 \draw(3.5, 0.25) node {$x_2$}; 
 \draw(3.5, -0.25) node {$x_3$};         
 \draw(2, 0) node {$x_1$};
  \draw(0.5, 0.25) node {$x_2$}; 
 \draw(0.5, -0.25) node {$x_3$};
   \draw(0.5, -1.25) node {$x_2$}; 
 \draw(0.5, -1.75) node {$x_3$};
  \draw(1.25, -0.75) node {$x_1$};
    \draw(-0.25, -0.75) node {$x_1$};
     \draw(-1, -1.5) node {$x_1$};
        \draw(-2.5, -1.25) node {$x_2$}; 
 \draw(-2.5, -1.75) node {$x_3$};

\end{tikzpicture}
\end{center}  
\end{example}


\section{A tilting object from levelled mutations}

In the previous section, we found a tilting object in some specific cases, and only when the Gorenstein parameter was $1$ or $2$. In this section, we use mutations to find a different tilting object in the case where $\nabla A$ is a Koszul levelled algebra. Our strategy is still to compare the semiorthogonal decompositions of Orlov and Minamoto--Mori. 

\begin{theorem}
\label{thm:main}
Let $A= \oplus_{i\geq 0} A_i$ and $e$ be as in Setting \ref{setting}. In addition, suppose that $eA_0e\cong k$ and that $\nabla A$ is a Koszul levelled algebra. There is a triangle equivalence 
\[
	\sing(eAe)\cong \bd((1-\tilde e)(\nabla A)^!(1-\tilde e)).
\]
\end{theorem}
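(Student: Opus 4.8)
The plan is to compare Orlov's and Minamoto--Mori's semiorthogonal decompositions of $\bd(\qgr eAe)$, and then to correct the silting object of Theorem \ref{thm:silting} into a genuine tilting object by means of a levelled mutation.

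First, exactly as in the proof of Theorem \ref{thm:silting}, I would use condition (a) of Setting \ref{setting} together with the equivalence $\Psi:\qgr A\xrightarrow{\sim}\qgr eAe$ of (\ref{ali:equi_qgr}) to transport the tilting object $\q U$ of Theorem \ref{thm:MM_tilting} to a tilting object $\q Ue$ in $\bd(\qgr eAe)$ with $\End_{\bd(\qgr eAe)}(\q Ue)\cong\nabla A$. The associated full strong exceptional collection $\mathbb P$ is the collection of indecomposable projective $\nabla A$-modules, and since $\nabla A$ is levelled it is a levelled exceptional collection. The summands $\q eAe,\ldots,\q eAe(\ell-1)$ of $\q Ue$ are the objects corresponding to the idempotent $\tilde e$; the hypothesis $eA_0e\cong k$ guarantees that each $\q eAe(i)$ is indecomposable and that there is exactly one such object in each level of $\mathbb P$, so that $\tilde e$ is \emph{transversal} to the levelling. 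By Orlov's decomposition and Lemma \ref{lem:orl_quot}, as recorded just before Theorem \ref{thm:silting}, one has $\sing(eAe)\cong\sbd(\qgr eAe)\cong{^\perp}\langle\q eAe,\ldots,\q eAe(\ell-1)\rangle$, so the task reduces to producing a tilting object in the left orthogonal of the $\tilde e$-objects.

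Next I would pass to the levelled dual collection. Since $\nabla A$ is levelled Koszul, Proposition \ref{prop:kos_dual} and Corollary \ref{cor:kos_dual} (together with the subsequent discussion treating the right dual) show that the levelled dual of $\mathbb P$ is again a full strong exceptional collection, now with endomorphism algebra $(\nabla A)^!$. Concretely I would perform iterated levelled mutations $R^r$ (respectively $L^r$) through the intervening levels in order to move the $\ell$ transversal objects $\q eAe(i)$ to one end of the collection; transversality ensures that each such step is an iterated mutation of a single object per level, so that the levelled-mutation formalism of \cite{Hil95} and the Serre-functor identity $R^{n-s(i)}=\Se_m^{-1}L^{s(i)}$ recalled after Proposition \ref{prop:kos_dual} apply. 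The outcome is a full exceptional collection consisting of a block of mutated $\tilde e$-objects and a complementary block $\mathbb Q$ of mutated $(1-\tilde e)$-objects lying in ${^\perp}\langle\q eAe,\ldots,\q eAe(\ell-1)\rangle$. Because the ambient collection is the levelled dual of a levelled Koszul algebra, $\mathbb Q$ is \emph{strong}, hence a tilting object of $\sing(eAe)$, and being a subcollection of the dual collection its endomorphism algebra is the corner $(1-\tilde e)(\nabla A)^!(1-\tilde e)$. Finally, $\sing(eAe)$ is algebraic and Krull--Schmidt, and $(1-\tilde e)(\nabla A)^!(1-\tilde e)$, as the endomorphism algebra of an exceptional collection, is finite-dimensional of finite global dimension, so Theorem \ref{thm:keller} yields the desired triangle equivalence.

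The main obstacle is the \emph{strongness} of the residual collection $\mathbb Q$, equivalently the assertion that the mutated $(1-\tilde e)$-objects have no higher self-extensions while cutting out exactly the orthogonal $\sing(eAe)$. For a general endomorphism algebra, levelled mutations of $\mathbb P$ would only yield a silting object — this is precisely why Theorem \ref{thm:silting} stops at $\ell\leq 2$ — and it is the levelled Koszul hypothesis, via Lemma \ref{lem:kos} and Corollary \ref{cor:kos_dual}, that forces the dual collection to stay strong. The two standing hypotheses enter exactly here: $eA_0e\cong k$ makes $\tilde e$ transversal, so that the levelled dual can be computed one level at a time, and the Koszulity of $\nabla A$ upgrades the silting object of Theorem \ref{thm:silting} to a tilting object whose endomorphism algebra is the corner of the Koszul dual. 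Verifying that this corner is read off from $(\nabla A)^!$ itself — rather than from some quotient algebra of $\nabla A$ — is the delicate point, and is where the interchange of projective and simple (dually, injective) objects under Koszul duality must be tracked with care.
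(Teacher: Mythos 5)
Your proposal follows essentially the same route as the paper's proof: compare the Minamoto--Mori and Orlov semiorthogonal decompositions of $\bd(\qgr eAe)$, transport the levelled structure of $\nabla A$ through the tilting equivalence of Theorem \ref{thm:MM_tilting}, peel off the exceptional objects $\q eAe(i)$ by iterated levelled right mutations, and use Proposition \ref{prop:kos_dual}, Corollary \ref{cor:kos_dual} and the Serre-functor identity $R^{n-s(i)} = \Se_m^{-1}L^{s(i)}$ to conclude that the residual subcollection of the dual collection is strong with endomorphism algebra $(1-\tilde e)(\nabla A)^!(1-\tilde e)$, finishing with Theorem \ref{thm:keller}. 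You also correctly identify strongness of the residual collection, and the identification of its endomorphism algebra as a corner of $(\nabla A)^!$ via the projective--simple interchange, as the two points where levelled Koszulity is indispensable.

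One claim should be corrected: $eA_0e \cong k$ does \emph{not} imply that each level of $\mathbb P$ contains exactly one $\tilde e$-object, and this ``transversality'' is neither true in general nor needed. What the hypothesis gives --- and all the argument uses --- is $\Hom_{\bd(\qgr eAe)}(\q eAe(i), \q eAe(i)) \cong eA_0e \cong k$, so that each $\q eAe(i)$ is exceptional; the objects are then mutated to the front one at a time, irrespective of how they are distributed among the levels. Indeed, in the paper's final example ($\ell = 2$, $G = \frac{1}{4}(1,1,3,3)$) the Beilinson algebra has four levels but only two $\tilde e$-vertices, so half of the levels contain no $\tilde e$-object at all. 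Since your transversality claim serves only as scaffolding (``the levelled dual can be computed one level at a time'') and no subsequent step depends on it, the proof stands once this is rephrased in terms of exceptionality alone.
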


\begin{remark}
As opposed to the situation in Theorem \ref{thm:MU}, the algebra $A$ itself is not necessarily Koszul with respect to the grading that endows it with a structure of AS-regular algebra. 
\end{remark}

\begin{proof}
The idea is to compare two semiorthogonal decompositions of $\bd(\qgr eAe)$, mentioned in the preliminaries: 
\begin{itemize}
\item \cite[Theorem 4.12]{MM11} ${\displaystyle \bd(\qgr eAe) = \langle \q Ae, \q Ae(1), \ldots, \q Ae(\ell-1)\rangle}$;
\item \cite[Theorem 2.5]{Orl09} ${\displaystyle \bd(\qgr eAe) = \langle \q eAe, \q eAe(1), \ldots, \q eAe(\ell-1), \Phi(\sing(eAe))\rangle}$.
\end{itemize}
Recall from Theorem \ref{thm:MM_tilting} that
\[
	\Hom_{\bd(\qgr eAe)} (\oplus_{i=0}^{\ell-1}\q Ae(i), \oplus_{i=0}^{\ell-1}\q Ae(i)) \cong \nabla A.
\]	
Moreover, the object $T:=\oplus_{i=0}^{\ell-1}\q Ae(i)$ is tilting, so there is a triangle equivalence 
\[
	F := \RHom_{\bd(\qgr eAe)}(T, -): \bd(\qgr eAe)\xrightarrow{\sim} \bd(\nabla A).
\] 
Since $\nabla A$ is levelled, the indecomposable projective $\nabla A$-modules give rise to a levelled full strong exceptional collection 
\[
	\bd(\nabla A) = \langle P_0, \ldots, P_m \rangle =: \mathbb P.
\]
This induces a levelled structure on a full strong exceptional collection 
\[
	\bd(\qgr eAe) = \langle E_0, \ldots, E_m \rangle = \langle \mathbb E_0,\ldots, \mathbb E_n\rangle =: \mathbb E,
\]
where each $E_j = F^{-1}(P_j)$ is an exceptional direct summand of $\q Ae(r)$ for some $0\leq r\leq \ell-1$, and $\E_j$ is the collection at level $j$. In particular, since 
\[
	\Hom_{\bd(\qgr eAe)}(\q eAe, \q eAe)\cong eA_0e \cong k, 
\]
we have that $\q eAe(i)$ is exceptional for all $0\leq i\leq \ell-1$, so $\q eAe(i) = E_{j_i}$ for some $0\leq j_i\leq m$. \\

Denote by $s: \{0,\ldots, m\}\to \{0,\ldots, n\}$ the level function on this collection. We now perform right mutations on $\mathbb \E$ to obtain a full exceptional collection
\[
	R_0\E:=  (\q eAe, \mathbb E_{s(j_0) +1}, \ldots, \E_n, R^{n-s(j_0)}(\E_{s(j_0)}'), R^{n-s(j_0)+1}(\E_{s(j_0)-1}), \ldots,  R^{n}(\E_0))
\]	
which generates $\bd(\qgr eAe)$. Here, $ \mathbb E_{s(j_0)}'$ is the collection obtained from $\mathbb E_{s(j_0)}$ by removing $\q eAe$. Similarly, denote by $R'_0\E$ the subcollection consisting of the objects of $R_0\E$, except for $\q eAe$. Then, by Lemma \ref{lem:orl_quot}, 
\[
	\langle R'_0\E\rangle \cong {^{\perp}\langle} \q eAe\rangle \cong \bd(\qgr eAe)/\langle \q eAe\rangle.
\]	
Comparing with Orlov's semiorthogonal decomposition, we thus conclude that 
\[
	\langle R'_0\E\rangle\cong \langle \q eAe(1), \ldots, \q eAe(\ell-1), \Phi(\sing(eAe))\rangle.
\]	
We now do right mutations on $R_0'\E$ to obtain a full exceptional collection 
\begin{multline*}
	R_1R_0'\E:= (\q eAe(1), \mathbb E_{s(j_1) +1}, \ldots, \E_n, R^{n-s(j_1)}(\E_{s(j_1)}'), R^{n-s(j_1)+1}(\E_{s(j_1)-1}), \ldots,\\  R^{n-s(j_0)-1}(\E_{s(j_0)+1}), R^{n-s(j_0)}(\E_{s(j_0)}'), R^{n-s(j_0)+1}(\E_{s(j_0)-1}), \ldots,  R^n(\E_0)).
\end{multline*} 
Then, by the same reasoning, we have an equivalence 
\[
	\langle R_1'R_0'\E\rangle\cong \langle \q eAe(2), \ldots, \q eAe(\ell-1), \Phi(\sing(eAe))\rangle.
\]
Continuing in this fashion for every $\q eAe(i)$, we obtain an equivalence 
\[
	\langle R'_{\ell-1}\cdots R'_0\E\rangle \cong \bd(\qgr eAe)/\langle \q eAe, \ldots, \q eAe(\ell-1)\rangle\cong \sing(eAe).
\]	

Note that $R'_{\ell-1}\cdots R'_0\E$ is a subcollection of the right dual collection $\E^{\vee}$, from which the objects $R^{n-s(j_i)}(\q eAe(i))$ are removed. By Corollary \ref{cor:kos_dual}, since $\nabla A$ is levelled Koszul, we have that $\E^{\vee}$ is a full strong exceptional collection and 
\[
	\End(\E^{\vee}) \cong (\nabla A)^!.
\]
Therefore the collection $R'_{\ell-1}\cdots R'_0\E$ is also strong. Now, by uniqueness of the Serre functor, there is a commutative diagram 

\begin{center}
\begin{tikzcd}
  \bd(\qgr eAe) \arrow[r, "F"] \arrow[d, "\Se_m^{-1}" ]
    & \bd(\nabla A) \arrow[d, "\Se_m^{-1}"] \\
  \bd(\qgr eAe) \arrow[r, "F" ]
&\bd(\nabla A) \end{tikzcd}
\end{center}
where, by abuse of notation, $\Se_m = \Se[-m]$ denotes the shifted Serre functor in $\bd(\qgr eAe)$ and $\bd(\nabla A)$. Moreover, 
\begin{align*}
	F(\q eAe(i)) &= \Hom_{\bd(\qgr eAe)}(\oplus_{i=0}^{\ell-1}\q Ae(i), \q eAe(i))\\
	&\cong \tilde e_i \Hom_{\bd(\qgr eAe)}(\oplus_{i=0}^{\ell-1}\q Ae(i), \oplus_{i=0}^{\ell-1}\q Ae(i))\\
	&\cong \tilde e_i \nabla A,
\end{align*}
where $\tilde e_i$ is defined in \ref{def:beilin}. Thus,
\[
	R^{n-s(j_i)}(\q eAe(i))\cong \Se_m^{-1}(L^{s(j_i)}(\q eAe(i)))\xmapsto{F} \Se_m^{-1}(L^{s(j_i)}(\tilde e_i\nabla A)) \cong \Se_m^{-1}(\topm(\tilde e_i\nabla A)[-s(j_i)]).
\] 
We conclude that 
\begin{align*}
	\End(R'_{\ell-1}\cdots R'_0\E)&\cong\End_{\bd(\nabla A)}\left(\bigoplus_{\substack{j\,:\, j\not = j_i}} \topm(P_j)[-s(j)]\right)\\
	&\cong (1-\tilde e)(\nabla A)^!(1-\tilde e), 
\end{align*}
so, applying theorem \ref{thm:keller}, we obtain a triangle equivalence 
\[
	\sing(eAe)\cong\bd((1-\tilde e)(\nabla A)^!(1-\tilde e)).
\]	
\end{proof}

Perhaps the most interesting consequence of this theorem is that the equivalence of Mori--Ueyama, cited in Theorem \ref{thm:MU}, can be obtained as a special case.  

\begin{corollary}[{\cite[Theorem 4.17]{MU16}}]
Let $S$ be a noetherian AS-regular Koszul algebra over $k$ of dimension $d\geq 2$. Let $G \leq \GrAut S$ be a finite subgroup such that $\chare  k$ does not divide $|G|$ and let $e = \frac{1}{|G|} \sum_{g\in G} g$. Finally, assume that $S^G \cong eS\#Ge$ is AS-Gorenstein and $S\#G/( e)$ is finite-dimensional over $k$. Then there is a triangle equivalence 
\[
	\sing(S^G)\cong \bd((1-\tilde e)(G\# \nabla (S^!))(1-\tilde e)).
\]
 \end{corollary}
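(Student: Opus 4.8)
The plan is to obtain this corollary as a direct instance of Theorem \ref{thm:main}, applied to the algebra $A := S\#G$ together with the idempotent $e = \frac{1}{|G|}\sum_{g\in G} g$. Accordingly, the work splits into two parts: verifying that $A$ and $e$ satisfy all the hypotheses of Theorem \ref{thm:main} (so that the conclusion $\sing(eAe)\cong\bd((1-\tilde e)(\nabla A)^!(1-\tilde e))$ applies), and then identifying the Koszul dual $(\nabla A)^!$ with $G\#\nabla(S^!)$. First I would check Setting \ref{setting}: since $S$ is noetherian AS-regular Koszul of dimension $d$ its Gorenstein parameter equals $d$ (as already noted after Theorem \ref{thm:MU}), and because $\chare k \nmid |G|$ the skew-group algebra $A = S\#G$ remains noetherian $d$-AS-regular of Gorenstein parameter $\ell = d$. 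The two standing assumptions of the corollary are precisely conditions \ref{enu:idem_fd} and \ref{enu:idem_gor}: $S\#G/(e)$ being finite-dimensional says $A/AeA$ is finite-dimensional, and $eAe\cong eS\#Ge\cong S^G$ is assumed $d$-AS-Gorenstein of parameter $d$. Finally, the extra hypothesis $eA_0e\cong k$ of Theorem \ref{thm:main} holds because $S_0 = k$ gives $A_0 = kG$, and $e$ is the idempotent of the trivial representation, so $eA_0e = e(kG)e\cong k$.

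Next I would verify that $\nabla A$ is a levelled Koszul algebra. Levelledness is structural: by Definition \ref{def:mckay} the quiver of $\nabla(S\#G)$ is an $\ell$-folded McKay quiver whose vertices carry the second index $p\in\{0,\dots,\ell-1\}$, and arrows raise $p$ by the degree of the corresponding generator; taking $s$ to record this index exhibits a levelled quiver. For Koszulity, the key input is that $S$ is Koszul, which forces the Beilinson algebra $\nabla S$ to be Koszul (one checks the $\Ext$-concentration criterion of Lemma \ref{lem:kos} for the levelled algebra $\nabla S$). Then I would use the identification $\nabla(S\#G)\cong (\nabla S)\#G$, with $G$ acting componentwise on the graded pieces $S_i$ assembled in the Beilinson matrix, and the fact that, since $\chare k\nmid |G|$, smashing a Koszul algebra with the semisimple group algebra $kG$ preserves Koszulity. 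Hence $\nabla A$ is levelled Koszul, so Theorem \ref{thm:main} applies and yields $\sing(S^G)\cong\bd((1-\tilde e)(\nabla A)^!(1-\tilde e))$.

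It remains to identify $(\nabla A)^!$. I would combine three compatibilities: (i) the Beilinson construction commutes with smashing, $\nabla(S\#G)\cong(\nabla S)\#G$; (ii) Koszul duality commutes with smashing by a group of order invertible in $k$, $(B\#G)^!\cong G\#B^!$; and (iii) the Beilinson algebra of $S$ is Koszul-dual to the Beilinson algebra of the Koszul dual, $(\nabla S)^!\cong \nabla(S^!)$. Chaining these gives
\[
	(\nabla A)^! \cong \big((\nabla S)\#G\big)^! \cong G\#(\nabla S)^! \cong G\#\nabla(S^!).
\]
Under this isomorphism the idempotent $\tilde e$ of $\nabla A$, built from the trivial representation in each level, corresponds to the analogous idempotent in $G\#\nabla(S^!)$, so $(1-\tilde e)(\nabla A)^!(1-\tilde e)\cong (1-\tilde e)(G\#\nabla(S^!))(1-\tilde e)$, and substituting into the equivalence above finishes the proof.

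The main obstacle is step three, namely the identification of the Koszul dual. Compatibility (iii), $(\nabla S)^!\cong\nabla(S^!)$, is the delicate point: one must pin down what $\nabla(S^!)$ means for the finite-dimensional Frobenius algebra $S^!$ (concentrated in degrees $0$ through $d$) and match gradings and the Gorenstein-parameter bookkeeping, and compatibility (ii) requires care with the left/right smash conventions ($-\#G$ versus $G\#-$) and with the opposite-algebra bookkeeping intrinsic to Koszul duality, so that the final algebra appears in exactly the form $G\#\nabla(S^!)$ of \cite[Theorem 4.17]{MU16}. Tracking the idempotent $\tilde e$ through all three isomorphisms is then routine but must be done carefully to land on $(1-\tilde e)(G\#\nabla(S^!))(1-\tilde e)$.
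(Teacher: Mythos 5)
Your proposal follows the paper's proof in all essentials: apply Theorem \ref{thm:main} to $A=S\#G$ (verifying Setting \ref{setting}, $eA_0e=e(kG)e\cong k$, and that $\nabla A$ is levelled Koszul), then identify $(\nabla A)^!\cong((\nabla S)\#G)^!\cong G\#(\nabla S)^!\cong G\#\nabla(S^!)$. What you flag as the ``main obstacles'' -- your compatibilities (i)--(iii) and the AS-regularity of $A$ -- are exactly what the paper imports from \cite{MU16} (Lemma 2.21 and Proposition 2.14 there), so these steps are citations rather than new work, and your chain of isomorphisms is literally the one appearing in the paper's proof.

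The one step where your written justification does not hold up is levelledness of $\nabla A$. First, Definition \ref{def:mckay} is stated only for $S=k[x_1,\ldots,x_d]$ with $G$ cyclic acting diagonally, whereas the corollary allows any noetherian AS-regular Koszul $S$ and any finite $G\leq\GrAut S$. Second, and more importantly, ``let $s$ record the Beilinson block index'' defines a level function only if every arrow raises that index by exactly $1$; since arrows raise the index by the degree of the corresponding generator, you need every generator of $A$ over $A_0$ to sit in degree $1$. This is precisely where Koszulity of $A$ (equivalently of $S$) must be invoked, via the graded isomorphism $A\cong T_{A_0}A_1/(R)$ -- your phrase ``arrows raise $p$ by the degree of the corresponding generator'' is not by itself a proof, and indeed the statement fails for general gradings: in the paper's Example \ref{ex:silting} the folded McKay quiver has degree-$0$ arrows staying inside a block, so the block index is not a level function there. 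The paper's proof makes exactly this point explicit: it passes to a Morita-equivalent quiver algebra $\bar A$, uses generation in degree $1$ to show that all arrows of $\nabla\bar A$ go from block $i$ to block $i+1$, and only then declares the block index a level function. With that one sentence added, your argument is complete and coincides with the paper's.
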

 
 \begin{proof}
 By \cite[Lemma 2.21]{MU16}, the algebra $A:=S\#G$ is AS-regular, so $A$ and the idempotent $e$ satisfy the conditions of Setting \ref{setting}. Moreover, we have that $eA_0e = ekGe\cong k$. Also, $S$ is Koszul, so $\nabla A \cong (\nabla S)\#G$ is Koszul.\\
 
 Since $A$ itself is also Koszul, there is a graded isomorphism $A\cong T_{A_0} A_1/(R)$. In particular, the Gorenstein parameter of $A$ is $d$. Moreover, $A$ is Morita equivalent to a quiver algebra $\bar A = T_{kQ_0} kQ_1/(\bar R)$ and $\nabla A$ is Morita equivalent to $\nabla \bar A = T_{k\tilde Q_0}k\tilde Q_1/(\tilde R)$. Suppose that $Q_0 = \{f^0,\ldots, f^m\}$ and $\tilde Q_0:=\{\tilde f^0_i, \ldots, \tilde f^m_i\,\,|\,\, 0\leq i\leq d-1\}$, where $\tilde f_i^t$ is an idempotent induced from $f^t$ as defined in \ref{def:beilin}. We define a level function $s: \tilde Q_0\to \{0,\ldots, d-1\}$ on $\nabla \bar A$ by setting $s(\tilde f^t_i) := i$ for all $0\leq t\leq m$ and $0\leq i\leq d-1$. This is in fact a level function, because 
 \[
 	\tilde f^r_j (\nabla \bar A) \tilde f^t_i \cong (f^rkQ_{j-i}f^t)_{(j,i)},
 \]
 where $(f^rkQ_{j-i}f^t)_{(j,i)}$ is the matrix whose only non-zero entry is at position $(j, i)$ and consists of the space generated by paths of length $j-i$ in $\bar A$ starting at $f^t$ and ending at $f^r$. This implies that $\tilde f^r_j (\nabla \bar A) \tilde f^t_i$ contains arrows only when $j = i+1$. Thus, if $\tilde f^r_j \tilde Q_1 \tilde f^t_i\not = \varnothing$, then $s(j) = s(i) + 1$. Therefore, all the hypotheses of our main theorem are fulfilled. \\
 
 Finally, by \cite[Proposition 2.14]{MU16}, 
 \[
 	(\nabla(S\#G))^!\cong ((\nabla S)\#G)^!\cong G\#(\nabla S)^! \cong G\#\nabla (S^!),
 \]
 so the conclusions of Theorems \ref{thm:MU} and \ref{thm:main} are the same in this context. 
 \end{proof}
 
 We conclude with an example where the Gorenstein parameter is not equal to $1$ nor to the global dimension. 
 
 \begin{example}
 Let $A = k[x_1, x_2, x_3, x_4]\#G$, where $G = \frac{1}{4}(1,1,3,3)<SL(4,k)$. The following quivers are described in Definition \ref{def:mckay}. The quiver of $A$ is given by: 

\begin{center}
\begin{tikzpicture}[scale = 2.5]
 \tikzstyle{every node}=[draw, circle, fill = white, draw = none, minimum size=1pt,
                            inner sep=0pt] 

\draw (1,0) node {$1$};
\draw (1,-1) node {$2$};
\draw (0,-1) node {$3$};

 \tikzstyle{every node}=[draw, rectangle, fill = white, minimum size=5pt,
                            inner sep=2pt] 
                            
\draw (0,0) node {$0$};

 \tikzstyle{every node}=[draw,circle,fill=black,minimum size=5pt,
                            inner sep=0pt]
  \draw [-Implies, line width = 0.6pt, double distance=3pt, shorten >=0.4pt] (0.2,0.1) to [out = 30, in = 150] (0.8,0.1);  
      \draw [-Implies, line width = 0.6pt, double distance=3pt, shorten >=0.4pt, very thick] (0.8,-0.1) to [out = 225, in = 315] (0.2,-0.1); 
      \draw [-Implies, line width = 0.6pt, double distance=3pt, shorten >=0.4pt, very thick] (-0.1,-0.8) to [out = 120, in = 240] (-0.1,-0.2); 
      \draw [-Implies, line width = 0.6pt, double distance=3pt, shorten >=0.4pt] (0.1,-0.2) to [out = 310, in = 60] (0.1,-0.8); 
        \draw [-Implies, line width = 0.6pt, double distance=3pt, shorten >=0.4pt, very thick] (0.2,-0.9) to [out = 30, in = 150] (0.8,-0.9);  
      \draw [-Implies, line width = 0.6pt, double distance=3pt, shorten >=0.4pt] (0.8,-1.1) to [out = 225, in = 315] (0.2,-1.1);     
            \draw [-Implies, line width = 0.6pt, double distance=3pt, shorten >=0.4pt] (0.9,-0.8) to [out = 120, in = 240] (0.9,-0.2); 
      \draw [-Implies, line width = 0.6pt, double distance=3pt, shorten >=0.4pt, very thick] (1.1,-0.2) to [out = 310, in = 60] (1.1,-0.8);    
      
\tikzstyle{every node}=[draw, rectangle, fill = white, draw = none, minimum size=5pt,
                            inner sep=0.1pt] 

\draw (0.5, 0.3) node {$x_1$};
\draw (0.5, 0.08) node {$x_2$};
\draw (0.5, -0.08) node {$x_3$};
\draw (0.5, -0.35) node {$x_4$};
\draw (0.5, -0.7) node {$x_4$};
\draw (0.5, -0.95) node {$x_3$};
\draw (0.5, -1.08) node {$x_2$};
\draw (0.5, -1.35) node {$x_1$};
\draw (-0.35, -0.5) node {$x_1$};
\draw (-0.1, -0.5) node {$x_2$};
\draw (0.12, -0.5) node {$x_3$};
\draw (0.35, -0.5) node {$x_4$};
\draw (0.9, -0.5) node {$x_3$};
\draw (0.68, -0.5) node {$x_4$};
\draw (1.12, -0.5) node {$x_2$};
\draw (1.35, -0.5) node {$x_1$};

\end{tikzpicture}  
\end{center} 
with relations $x_ix_j - x_jx_i =0$. We give $A$ a grading by putting the thick arrows in degree $1$ and the other arrows in degree $0$. In this case, $A$ is a $4$-AS-regular algebra of Gorenstein parameter $2$. It is important to mention that $A$ is not Koszul with respect to the grading that endows it with the structure of an AS-regular algebra, so it does not fit in the setting of \cite{MU16}.\\

 Let $e$ be the idempotent corresponding to vertex $0$. Then $eAe\cong S^G$ is an isolated singularity, so $A/AeA$ is finite-dimensional. Moreover, since $G<SL(4,k)$, $S^G$ is AS-Gorenstein. The quiver of $\nabla A$ is given by

\begin{center}
\begin{tikzpicture}[scale = 2.5]
 \tikzstyle{every node}=[draw, circle, fill = white, draw = none, minimum size=1pt,
                            inner sep=0pt] 

\draw (1,0) node {$1^0$};
\draw (0,-1) node {$2^0$};
\draw (1,-1) node {$3^0$};
\draw (3,0) node {$1^1$};
\draw (2,-1) node {$2^1$};
\draw (3,-1) node {$3^1$};

 \tikzstyle{every node}=[draw, rectangle, fill = white, minimum size=5pt,
                            inner sep=2pt] 
                            
\draw (0,0) node {$0^0$};
\draw (2,0) node {$0^1$};

\tikzstyle{every node}=[draw,circle,fill=black,minimum size=5pt,
                            inner sep=0pt]
  \draw [-Implies, line width = 0.6pt, double distance=3pt, shorten >=0.4pt] (0.2,0) -- (0.8,0); 
  \draw [-Implies, line width = 0.6pt, double distance=3pt, shorten >=0.4pt] (1.2,0) -- (1.8,0); 
  \draw [-Implies, line width = 0.6pt, double distance=3pt, shorten >=0.4pt] (2.2,0) -- (2.8,0); 
  \draw [-Implies, line width = 0.6pt, double distance=3pt, shorten >=0.4pt] (0.2,-1) -- (0.8,-1); 
  \draw [-Implies, line width = 0.6pt, double distance=3pt, shorten >=0.4pt] (1.2,-1) -- (1.8,-1); 
  \draw [-Implies, line width = 0.6pt, double distance=3pt, shorten >=0.4pt] (2.2,-1) -- (2.8,-1); 
  \draw [-Implies, line width = 0.6pt, double distance=3pt, shorten >=0.4pt] (0.2,-0.8) -- (0.8,-0.2); 
  \draw [-Implies, line width = 0.6pt, double distance=3pt, shorten >=0.4pt] (0.2,-0.2) -- (0.8,-0.8); 
  \draw [-Implies, line width = 0.6pt, double distance=3pt, shorten >=0.4pt] (1.2,-0.8) -- (1.8,-0.2); 
  \draw [-Implies, line width = 0.6pt, double distance=3pt, shorten >=0.4pt] (1.2,-0.2) -- (1.8,-0.8); 
  \draw [-Implies, line width = 0.6pt, double distance=3pt, shorten >=0.4pt] (2.2,-0.8) -- (2.8,-0.2); 
  \draw [-Implies, line width = 0.6pt, double distance=3pt, shorten >=0.4pt] (2.2,-0.2) -- (2.8,-0.8); 
  
  \tikzstyle{every node}=[draw, rectangle, fill = white, draw = none, minimum size=5pt,
                            inner sep=0.1pt] 

\draw (0.5, 0.1) node {$x_1$};
\draw (0.5, -0.1) node {$x_2$};
\draw (1.5, 0.1) node {$x_3$};
\draw (1.5, -0.1) node {$x_4$};
\draw (2.5, 0.1) node {$x_1$};
\draw (2.5, -0.1) node {$x_2$};
\draw (0.5, -0.9) node {$x_1$};
\draw (0.5, -1.1) node {$x_2$};
\draw (1.5, -0.9) node {$x_3$};
\draw (1.5, -1.1) node {$x_4$};
\draw (2.5, -0.9) node {$x_1$};
\draw (2.5, -1.1) node {$x_2$};

\draw (0.43, -0.25) node {$x_3$};
\draw (0.22, -0.4) node {$x_4$};
\draw (0.22, -0.6) node {$x_3$};
\draw (0.46, -0.72) node {$x_4$};
\draw (1.43, -0.25) node {$x_1$};
\draw (1.22, -0.4) node {$x_2$};
\draw (1.22, -0.6) node {$x_1$};
\draw (1.46, -0.72) node {$x_2$};
\draw (2.43, -0.25) node {$x_3$};
\draw (2.22, -0.4) node {$x_4$};
\draw (2.22, -0.6) node {$x_3$};
\draw (2.46, -0.72) node {$x_4$};
\end{tikzpicture}
\end{center}
and the relations are induced from the relations in $A$. The induced idempotent $\tilde e$ is the one corresponding to the vertices in the boxes. This is a Koszul levelled algebra, so by Theorem \ref{thm:main}, there is a triangle equivalence 
\[
	\sing(S^G)\cong\bd((1-\tilde e)(\nabla A)^!(1-\tilde e)). 
\]

The quiver of $(1-\tilde e)(\nabla A)^{!}(1-\tilde e)$ is given by

\begin{center}
\begin{tikzpicture}[scale = 2.5]
 \tikzstyle{every node}=[draw, circle, fill = white, draw = none, minimum size=1pt,
                            inner sep=0pt] 

\draw (0,-0.5) node {$2^0$};
\draw (0.9,0) node {$1^0$};
\draw (0.9,-1) node {$3^0$};
\draw (1.8,-0.5) node {$2^1$};
\draw (2.7,0) node {$3^1$};
\draw (2.7,-1) node {$1^1$};

\tikzstyle{every node}=[draw,circle,fill=black,minimum size=5pt,
                            inner sep=0pt]
  \draw [decoration={markings,mark=at position 1 with {\arrow[scale=2]{>}}},
    postaction={decorate}, shorten >=0.4pt] (2.6,0) -- (1,0); 
      \draw [decoration={markings,mark=at position 1 with {\arrow[scale=2]{>}}},
    postaction={decorate}, shorten >=0.4pt] (2.6,-1) -- (1,-1); 
    \draw [-Implies, line width = 0.6pt, double distance=3pt, shorten >=0.4pt] (2.6,-0.1) -- (1.9,-0.4); 
  \draw [-Implies, line width = 0.6pt, double distance=3pt, shorten >=0.4pt] (2.6,-0.9) -- (1.9,-0.6); 
  \draw [-Implies, line width = 0.6pt, double distance=3pt, shorten >=0.4pt] (1.7,-0.4) -- (1,-0.1); 
  \draw [-Implies, line width = 0.6pt, double distance=3pt, shorten >=0.4pt] (1.7,-0.6) -- (1,-0.9); 
  \draw [-Implies, line width = 0.6pt, double distance=3pt, shorten >=0.4pt] (0.8,-0.05) -- (0.1,-0.4); 
  \draw [-Implies, line width = 0.6pt, double distance=3pt, shorten >=0.4pt] (0.8,-0.95) -- (0.1,-0.6); 

\tikzstyle{every node}=[draw, rectangle, fill = white, draw = none, minimum size=5pt,
                            inner sep=0.1pt] 

\draw (1.8, 0.1) node {$x_3x_4$};
\draw (1.8, -1.1) node {$x_1x_2$};
\draw (0.35, -0.1) node {$x_3$};
\draw (0.6, -0.3) node {$x_4$};
\draw (0.6, -0.7) node {$x_1$};
\draw (0.35, -0.9) node {$x_2$};
\draw (1.4, -0.15) node {$x_1$};
\draw (1.25, -0.35) node {$x_2$};
\draw (1.25, -0.65) node {$x_3$};
\draw (1.4, -0.85) node {$x_4$};
\draw (2.1, -0.15) node {$x_1$};
\draw (2.4, -0.35) node {$x_2$};
\draw (2.1, -0.85) node {$x_3$};
\draw (2.4, -0.65) node {$x_4$};

\end{tikzpicture}
\end{center}
and the relations are given by $x_ix_j + x_jx_i =0$. 
 \end{example}


\bibliographystyle{alpha}

\bibliography{bib_stable_MCM_diff_gor_par.bib}

\end{document}